\theoremstyle{plain}
\newtheorem{theorem}{Theorem}[section]
\newtheorem{proposition}[theorem]{Proposition}
\newtheorem{lemma}[theorem]{Lemma}
\newtheorem{corollary}[theorem]{Corollary}
\newtheorem{definition}[theorem]{Definition}
\newtheorem{remark}[theorem]{Remark}
\numberwithin{equation}{section}
\def\bN{\mathbb{N}}
\def\bR{\mathbb{R}}
\def\cH{\mathcal{H}}
\def\cE{\mathcal{E}}
\def\cI{\mathcal{I}}
\def\cJ{\mathcal{J}}
\def\cL{\mathcal{L}}
\DeclareMathOperator{\dv}{div}
\DeclareMathOperator{\sgn}{sgn}
\DeclareMathOperator{\dist}{dist}
\DeclareMathOperator{\supp}{supp}
\def\eps{\varepsilon}
\def\vfi{\varphi}
\def\dd{\mathrm{d}}
\definecolor{wineRed}{rgb}{0.7,0,0.3}
\definecolor{DGreen}{rgb}{0,0.6,0}
\newcommand{\PR}[1]{{\color{wineRed}#1}}
\newcommand{\ml}[1]{{\color{blue}#1}}
\title{Mosco convergence framework for singular limits of~gradient~flows on Hilbert spaces with applications
}
\author{Yoshikazu Giga$^*$, Micha{\l} {\L}asica$^{\dagger}$, Piotr Rybka$^\ddagger$ \\
\small{$^*$ The University of Tokyo, Graduate School of Mathematical Sciences} \\
\small{$^\dagger$ Institute of Mathematics of the Polish Academy of Sciences} \\
\small{$^\ddagger$ University of Warsaw, Faculty  Mathematics, Informatics and Mechanics}}
\begin{document} 

\maketitle

\begin{abstract}
We consider the question of convergence of a sequence of gradient flows defined on different Hilbert spaces. In order to give meaning to this idea, we introduce a notion of connecting operators. This permits us to generalize the concept of Mosco convergence of functionals to our present setting, and state a desired convergence result for gradient flows, which we then prove. We present a variety of examples, including thin domains, dynamic boundary conditions, and discrete-to-continuum limits.

\end{abstract}

\bigskip\noindent
{\bf Key words:} \quad boundary layer, dynamic boundary conditions, convergence of gradient flows, generalized Mosco convergence, differential equations on graphs, total variation flow.

\bigskip\noindent
{\bf 2020 Mathematics Subject Classification.} Primary: 35K20, Secondary: 35B25, 49J45

\section{Introduction} 
Gradient flows, also known as steepest descent flows, are a common type of dynamics, where the velocity of the state variable is given by the negative gradient of a given function on the state space, often referred to as the energy. This notion has been generalized in many directions, see e.g.\ \cite{Brezis}, \cite{AGS05}, \cite{robinson}. Here, we work in the setting of convex analysis on a Hilbert space $X$, where the trajectories of the flow are given by 
\begin{equation}\label{ir0}
u_t \in - \partial \cE(u),\qquad u(0) = u_0,
\end{equation}
where $\cE \colon X \to [0, \infty]$ is convex and lower semicontinuous. The symbol $\partial \cE$ denotes the subdifferential, which is a multivalued operator in general. Among examples of equations of form \eqref{ir0}, there are many parabolic PDEs, such as the heat equation, or the total variation flow.

In asymptotic analysis, one is often led to consider families of gradient flows on $X$ depending on a small parameter $\eps >0$, 
\begin{equation}\label{ir1}
u^\eps_t \in - \partial \cE_\eps(u^\eps),\qquad u^\eps(0) = u^\eps_0.
\end{equation}
The variational structure of \eqref{ir1} allows one to understand the limiting behavior of \eqref{ir1} without identifying $\partial \cE_\eps$; or, if \eqref{ir1} corresponds to a known parabolic PDE, without carrying out any analysis of that PDE. It is natural to ask the following. Suppose the initial conditions $u^\eps_0$ converge to $u_0$. What kind of convergence of the convex functionals $\cE_\eps$ guarantees convergence of the solutions $u^\eps$? This question has been investigated for quite some time. In case $\partial \cE_\eps$ are linear operators, the answer can be given by saying that the resolvents of  $\partial \cE_\eps$ must converge, see e.g.\ \cite[Theorem VIII.20]{reed}. Its generalization to the nonlinear setting led to Mosco convergence, see \cite{mosco},
\cite{watanabe}, \cite[Chapter 3]{attouch}.

In the present paper, we are interested in the question of convergence of solutions to (\ref{ir1}), when the functionals $\cE_\eps$ are no longer defined on the same Hilbert space $X$, but rather each $\cE_\eps$ lives on its own $X_\eps$ and these $X_\eps$'s differ from $X_0$, the space where the limit functional is defined (even though, naturally, they will often be isomorphic). We are particularly interested in the case when $X_\eps = X_\eps(\Omega)$ are function spaces on more or less the same base space $\Omega$, with the topology in $X_\eps$ depending on $\eps$ and $X_0= X_0(\Omega')$, where $\Omega$ and $\Omega'$ may differ significantly, e.g.\ they may have different dimensions. 

We note that in the context of stationary minimization problems in the calculus of variations, such situation does not pose a significant conceptual difficulty. Indeed, one usually embeds $X_\eps$ into a common space (by a change of variables or otherwise), and works directly with the topology of that space. However, in the gradient flow case, the evolution itself is defined in terms of the metric, so one needs to take more care. Naturally, many problems of the form we consider were treated before on a case-by-case basis (see below). Our aim here is to provide a simple and general unified theory. 
%We present a generalization of the Mosco convergence in application to convergence of gradient flows.

In order to be able to compare $u^\eps$ to $u$, we assume that we are given bounded linear maps  $L_\eps \colon X_\eps \to X_0$ that we call \emph{connecting operators}. 
This allows us to discuss strong and weak convergence of sequences $(w_\eps)_{\eps >0}$, $w_\eps \in X_\eps$ along $L_\eps$ (Definition \ref{def:conv_s_W}). We then introduce a concept that we call \emph{Mosco convergence of spaces $X_\eps$ along} $L_\eps$, which asserts that $L_\eps$ are both asymptotically contractive and asymptotically surjective (although they need not be immersions), see Definition \ref{def:connect}. 
Next, we define the concept of \emph{Mosco convergence of functionals} $\cE_\eps$  along $L_\eps$, see Definition \ref{mos-con}. 
Our main result, Theorem \ref{thm:Mosco}, says that if $X_\eps$ Mosco-converge to $X_0$ along $L_\eps$, and $\cE_\eps$ Mosco-converge to $\cE_0$ along $L_\eps$, then the corresponding  gradient flows converge uniformly along $L_\eps$ (see Definition \ref{con-flo}) to the gradient flow of the limit functional. 

After this work was completed, a closely related preprint \cite{MvG} by S.\ Mercer and Y.\ van Gennip appeared, where a much more general framework for convergence of gradient flows defined on different spaces was proposed. Our approach seems much simpler than theirs. A significant difference between the two is also that in \cite{MvG}, compactness (or equicoercivity of the sequence of energies) has to be assumed a priori, while in our case strong convergence of the flows follows from a Mosco-type argument. We also mention the works \cite{KS1, KS2, Kuroda}, where related ideas were investigated in the setting of Dirichlet forms on metric spaces. 

In order to showcase versatility of Theorem \ref{thm:Mosco}, we will present an array of diverse examples where our convergence result applies:
\begin{itemize} 
\item derivation of equations on thin domains,
\item derivation of dynamic boundary conditions for the $p$-heat equation and the total variation flow from boundary layer,
\item showing that equations with dynamic boundary conditions are intermediate between Neumann and Dirichlet data for the total variation flow and the $p$-heat equation,
\item convergence of differential equations on
combinatorial graphs to PDEs on $\bR^n$.
\end{itemize} 

These examples are a significant part of our paper, for that reason let us briefly discuss them. We first offer a new viewpoint on an old problem of deriving equations on thin domains from their counterparts on thick regions.
This problem was first studied by \cite{HaRau} for reaction diffusion equations on a domain of variable thickness whose thin domain limit is a flat space.
 It is extended to various settings for example, for a curved thin domain and a domain with hole \cite{PrizziRR}, \cite{PrizziR}.
 When the limit domain is one-dimensional, the limit equation was already used to construct a stable stationary solution in a thick domain \cite{Ya} by constructing sub and supersolutions.
 The work \cite{HaRau} was first extended to the Navier--Stokes equations by \cite{RauSell} for a flat thin domain.
 Recently, it has been extended for curved thin domain in a series of papers by \cite{Miu1, Miu2, Miu3}.
 This extension is non-trivial since it is a problem for tangent vector fields and not for scalar functions.
 There is a nice review article \cite{raugel} for developments before 1995.
 All these works (except \cite{Ya}) more or less used variational structure of the problems.
 To handle more general equations with no variational structure, recently it has been addressed within the framework of viscosity solutions for elliptic problems, see \cite{isabeau1}, \cite{isabeau2}. Here, we address this problem from the perspective of our abstract theory in the case of the $p$-harmonic flow on a domain whose thickness tends to zero. The details are presented in Section \ref{Sthin}. 

We also study a standard example of the $p$-heat equation, including the limit case $p=1$, i.e., the total variation flow in a region with smooth boundary in $\bR^n$. Since the total variation flow requires an additional treatment in comparison with the $p$-heat equation for $p>1$ we consider these problems in separate sections. We first deal with the case $p>1$.

We are interested in two problems for the $p$-heat flows. The first one is a derivation of the dynamic boundary condition from the boundary layer problem, this is the content of Section \ref{sec5.1}. In the case $p=2$, it is referred to as the concentrating capacity problem, and the dynamic boundary condition is derived as a limit in \cite{ColliR}, \cite{glr}. In this case, the spaces $X_\eps$ are $L^2(\Omega, \|\cdot \|_\eps)$, where the inner product is related to the thickness of the boundary layer. Moreover, $X_0 = L^2(\Omega)\times L^2(\partial\Omega)$ with the standard inner product. This is an illustration that our approach may be applicable to nonlinear problems. We also see that the dimension of the base space changes as $\eps$ goes to zero. In Subsection \ref{sec5.2}, we derive the Neumann and Dirichlet boundary conditions from the dynamic boundary conditions depending on a parameter $\tau$. Thus, we might claim that the dynamic boundary conditions are intermediate between  the Dirichlet and the Neumann data.  In both cases, $X_\eps$ strictly contain $X_0$. We admit general domains $\Omega$ with smooth boundary. The connecting operators are constructed by a directional average over the boundary layer.

In order to complete the treatment of the $p$-heat equation, we consider in Section \ref{S-TV} the case $p=1$, i.e., the total variation flow. We choose to present it separately, because the results are qualitatively different, which is related to the lack of continuity of the boundary trace operator on $BV(\Omega)$. Moreover, dealing with $BV$ functions involves additional technical difficulties, calling for an approximation argument. We note that tackling the problems involving dynamic boundary conditions in $p=1$ was our original motivation for the development of the abstract theory based on variational convergence of energy, which can provide convergence results without the need to carry out any analysis of PDEs, as the system (\ref{tvfdbcbulk}--\ref{tvfdbczbdry}) is quite complicated, and the total variation flow is notorious for its limited regularity.

We close the paper with an example of different nature. Namely, we present the case of diffusion in a discrete setting of a graph with vertices
$V_\varepsilon := \left(\varepsilon \mathbb{Z}/\mathbb{Z}\right)^n \simeq \{ 0, \varepsilon, 2\varepsilon, \ldots, 1-\varepsilon \}^n$.
We show that the gradient flow of the integral of the discrete gradient converges to the gradient flow of the orthotropic Dirichlet integral. In the case of orthotropic total variation ($p$=1), this follows from the fact that the trajectories of the discrete flows are in fact also trajectories of the limiting flow \cite{tetris, Lphd, KSS} and the contractivity of the flow. A similar convergence result (with a convergence rate) has been established by \cite{GGO} for the Allen--Cahn equation in the one-dimensional setting with a different approach.
 In our current study, the main difficulty is constructing the connecting operator. We stress that the primary object here is the equation on a graph, which is not a discretization of a PDE. This opens the door for studying limits of equation on graphs more complicated than $V_\eps$. A similar approach to the study of the discretization of reaction-diffusion systems has been recently presented in \cite{HMS}. Their approach is based on continuous and discrete gradient systems in the continuity equation format. This approach was initiated in \cite{PeS}.

%This variety of  examples shows that the tool we present here is quite versatile and it has potential for further applications.

Our approach to the convergence of solutions to (\ref{ir1}) based on a generalization of the Mosco convergence is just one of the possible approaches. We should mention at this point the method based on $\Gamma$-convergence of functionals $\cE_\eps$ combined with the energy-dissipation balance, see \cite{sylvia}. Later, this approach was further developed by Mielke and his collaborators among others, see \cite{M} and more recently \cite{HMS}.  We also contributed to this approach in \cite{glr} by  deriving the dynamic boundary conditions for the heat equation.

We must stress that the notion of gradient flow is quite broad. One important generalization is in the context of the flows in metric spaces as presented in \cite{AGS05}. Recently, the flows in the Wasserstein metric spaces gains more attention due to their application in Machine Learning. An overview of research devoted to these flows may be found in \cite{santa}.

There are papers devoted to the question of stability of the flows in metric spaces with respect to the space, see \cite{fleissner} or \cite{gigli}. We note that the author of \cite{gigli} deals with the Hausdorff--Gromov distance of compact metric spaces, which is different from our setting.

One may say that the approach to gradient flows presented in \cite{robinson} is at the other end of the spectrum, as the role of the functional generating the flow is secondary. What matters is the structure of the global attractor and the orbits connecting critical points. It is not surprising that this approach is quite far from the content of the present paper.

Here is the organization of the paper. Section \ref{sec2} presents the abstract framework and the main result. In Section \ref{s3} we show
the proof of Theorem \ref{thm:Mosco}.
Sections \ref{Sthin} to \ref{SgraphPDE} 
are devoted to the specific examples.

\section{An abstract framework for convergence of gradient flows} \label{sec2}

The concept of Mosco-convergence of a sequence of functionals $\cE_\eps$ defined on a fixed function space is well known, see \cite{attouch}. We present here its extension to the case where not only $\cE_\eps$ depend on $\eps$, but also their space of definition changes as $\eps\to 0^+$. Let $X_0$, $X_\eps$, $\eps >0$ be (real) Hilbert spaces. We first need to introduce a suitable notion of connection between the spaces $X_\eps$ and $X_0$ that will allow us to discuss convergence of sequences $(x_\eps)$, $x_\eps \in X_\eps$. We will simply assume that we are given a family of bounded linear operators $L_\eps \colon X_\eps \to X_0$, $\eps >0$. We will call them \emph{connecting operators}. We will also say that $X_\eps$ are connected to $X_0$ by $L_\eps$. 

\begin{definition} \label{def:conv_s_W}
Let $w \in X_0$, $w^\eps \in X_\eps$, $\eps >0$. We say that $w^\eps$ converge (strongly) to $w$ along $L_\eps$ if 
	\begin{equation} \label{H2_conv}
 \|w^\eps\|_{X_{\eps}} \to \|w\|_{X_0}\quad \text{and} \quad L_{\eps} w^{\eps} \to w \quad \text{in } X_0.
    \end{equation}
 We say that $w^\eps$ converge weakly to $w$ along $L_\eps$ if \begin{equation} \label{H2_conv_w}
 \limsup_{\eps \to 0^+} \|w^\eps\|_{X_{\eps}} < \infty \quad \text{and} \quad L_{\eps} w^{\eps} \rightharpoonup w \quad \text{in } X_0.
    \end{equation} 
\end{definition} 
We stress that the notion of convergence along $L_\eps$ applies to a sequence $(w^\eps)$ and element $w$, a priori all belonging to different spaces $X_\eps$, $X_0$. It is a qualitatively different concept from the usual convergence of sequences in a fixed topological (vector) space. However, it has many similar properties. First of all, uniqueness of the limit follows from uniqueness of the strong/weak limit of the sequence $L_\eps w^\eps$ in $X_0$.

In order to develop further theory, we will require the following additional hypotheses on $L_\eps$. 

\begin{definition} \label{def:connect}
We say that a family $(L_\eps)_{\eps >0}$ of bounded linear operators $L_\eps \colon X_\eps \to X_0$ is \emph{asymptotically contractive} if 
\begin{enumerate}[label={\rm (H\arabic*)}] 
\item \label{H1}    $\limsup_{\eps \to 0} \|L_\eps\|_{B(X_\eps,X_0)} \leq 1.$
\end{enumerate} 
We say that $L_\eps$ are \emph{asymptotically surjective} if  
\begin{enumerate}[label={\rm (H\arabic*)}]
\setcounter{enumi}{1}
\item \label{H2} for every $w \in X_0$, there exists $(w^\eps)$, $w^\eps \in X_\eps$ such that 
%$w_\eps$ converge strongly to $w$ along $L_\eps$. 
$L_\eps w^\eps \rightharpoonup w$ in $X_0$.
\end{enumerate} 
If both \ref{H1} and \ref{H2} hold, we will say that \emph{$X_{\eps}$ Mosco-converge to $X_0$ along $L_\eps$}. 
\end{definition} 
\noindent
Note that \ref{H1} implies that whenever $(w^\eps)$, $w^\eps \in X_\eps$ is \emph{asymptotically bounded}, i.\,e., such that 
\begin{equation} \label{bdd_seq}
\limsup_{\eps \to 0^+} \|w^\eps\|_{X_{\eps}} < \infty,
\end{equation} 
there exists $w \in X_0$ and a subsequence $(w^{\eps_k})$ of $(w^\eps)$ such that
\begin{equation} \label{weak_conv_L}
L_{\eps_k} w^{\eps_k} \rightharpoonup w \quad \text{in } X_0.
\end{equation} 
In this sense, property \ref{H2} is complementary to \ref{H1}. 

The following proposition shows that under the assumption \ref{H1}, strong and weak convergence along $L_\eps$ behave a lot like usual strong and weak convergence in a Hilbert space. 

\begin{proposition} \label{L_conv_prop} Suppose that $L_\eps$ are asymptotically contractive. Then:
\begin{itemize} 
\item[(i)] If $w^\eps$ converge weakly along $L_\eps$ to $w$ and $\|w^\eps\|_{X_\eps} \to \|w\|_{X_0}$, then $w^\eps$ converge strongly along $L_\eps$ to $w$. 
\item[(ii)]Suppose that $w^\eps$ converge strongly to $w$ along $L_\eps$ and $v^\eps$ converge weakly to $v$ along $L_\eps$. Then 
\begin{equation} \label{L_weak_strong} 
(w^\eps, v^\eps)_{X_\eps} \to (w,v)_{X_0}.
\end{equation} 
\item[(iii)]Let $a_1, a_2 \in \mathbb{R}$ and suppose that $w_1^\eps$, $w_2^\eps$ converge strongly (resp.\ weakly) to $w^1$, $w^2$ along $L_\eps$. Then $a_1 w_1^\eps + a_2 w_2^\eps$ converge strongly (resp.\ weakly) to $a_1 w_1 + a_2 w_2$.
\end{itemize} 
\end{proposition}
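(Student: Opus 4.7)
The plan is to treat (i), (iii), then (ii), since (ii) is the crux and the other parts reduce to it or to standard Hilbert-space computations.

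For (i), I would expand $\|L_\eps w^\eps - w\|_{X_0}^2 = \|L_\eps w^\eps\|_{X_0}^2 - 2(L_\eps w^\eps, w)_{X_0} + \|w\|_{X_0}^2$. The cross term tends to $2\|w\|_{X_0}^2$ by weak convergence of $L_\eps w^\eps$ to $w$ in $X_0$. For the first term, the estimate $\|L_\eps w^\eps\|_{X_0} \leq \|L_\eps\|_{B(X_\eps,X_0)} \|w^\eps\|_{X_\eps}$, combined with \ref{H1} and the hypothesis $\|w^\eps\|_{X_\eps} \to \|w\|_{X_0}$, yields $\limsup \|L_\eps w^\eps\|_{X_0} \leq \|w\|_{X_0}$, while weak lower semicontinuity of the norm in $X_0$ provides the opposite inequality for the liminf. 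Hence $\|L_\eps w^\eps\|_{X_0} \to \|w\|_{X_0}$ and the whole expression tends to zero.

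The main obstacle is (ii): the inner products $(w^\eps, v^\eps)_{X_\eps}$ and $(w, v)_{X_0}$ live in incomparable spaces, so one cannot pass to the limit directly. My idea is to compare $\|L_\eps(w^\eps + tv^\eps)\|_{X_0}^2$ with $\|L_\eps\|^2\|w^\eps + tv^\eps\|_{X_\eps}^2$ for a real parameter $t$. On the one hand, linearity of $L_\eps$ together with the assumed convergences give $L_\eps(w^\eps + tv^\eps) \rightharpoonup w + tv$ in $X_0$, so by weak lower semicontinuity
\[
\|w+tv\|_{X_0}^2 \;\leq\; \liminf_{\eps \to 0^+}\|L_\eps(w^\eps + tv^\eps)\|_{X_0}^2.
\]
On the other hand, expanding
\[
\|L_\eps\|^2\|w^\eps + tv^\eps\|_{X_\eps}^2 = \|L_\eps\|^2\bigl(\|w^\eps\|_{X_\eps}^2 + 2t(w^\eps,v^\eps)_{X_\eps} + t^2\|v^\eps\|_{X_\eps}^2\bigr),
\]
using \ref{H1}, the convergence $\|w^\eps\|_{X_\eps}^2 \to \|w\|_{X_0}^2$, and taking the liminf (with the sign of $t$ determining whether the $(w^\eps,v^\eps)$ and $\|v^\eps\|^2$ contributions appear as liminfs or limsups), I expect to arrive, after cancelling the $\|w\|_{X_0}^2$ and $t^2\|v\|_{X_0}^2$ contributions on both sides, at
\[
2t\bigl[(w,v)_{X_0} - \liminf(w^\eps,v^\eps)_{X_\eps}\bigr] \;\leq\; t^2\bigl[\limsup\|v^\eps\|_{X_\eps}^2 - \|v\|_{X_0}^2\bigr]
\]
for $t > 0$, and the analogous inequality with $\limsup(w^\eps,v^\eps)_{X_\eps}$ for $t < 0$. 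Dividing by $t$ and sending $t \to 0^\pm$ then sandwiches $\limsup(w^\eps,v^\eps)_{X_\eps} \leq (w,v)_{X_0} \leq \liminf(w^\eps,v^\eps)_{X_\eps}$, which is the desired convergence.

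For (iii), the weak statement follows immediately from linearity of $L_\eps$, linearity of weak convergence in $X_0$, and the triangle inequality in $\|\cdot\|_{X_\eps}$ for asymptotic boundedness. For the strong statement, the only nontrivial point is norm convergence, which I would derive by expanding
\[
\|a_1 w_1^\eps + a_2 w_2^\eps\|_{X_\eps}^2 = a_1^2\|w_1^\eps\|_{X_\eps}^2 + 2a_1 a_2 (w_1^\eps, w_2^\eps)_{X_\eps} + a_2^2\|w_2^\eps\|_{X_\eps}^2
\]
and invoking the norm hypotheses together with part (ii) applied to the cross term, using that strong convergence along $L_\eps$ trivially entails weak convergence along $L_\eps$.
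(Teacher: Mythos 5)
Your proof is correct, and for part (ii) it takes a genuinely different route from the paper's. Parts (i) and (iii) are essentially the same as in the paper: for (i) the paper shows $\|L_\eps w^\eps\|_{X_0} \to \|w\|_{X_0}$ exactly as you do and then appeals to the standard fact that weak convergence plus norm convergence implies strong convergence, which your expansion simply makes explicit; for (iii), the paper's argument for the strong case is likewise the expansion of $\|a_1 w_1^\eps + a_2 w_2^\eps\|_{X_\eps}^2$ and an appeal to (ii) for the cross term (the paper's reference to "the first part" is evidently meant to be (ii)).

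The real difference is in (ii). The paper first proves an auxiliary lemma (Lemma \ref{Lstar}): if $w^\eps$ converges strongly along $L_\eps$, then $\|L_\eps^* L_\eps w^\eps - w^\eps\|_{X_\eps} \to 0$. It then splits $(w^\eps, v^\eps)_{X_\eps} = (L_\eps^* L_\eps w^\eps, v^\eps)_{X_\eps} + (L_\eps^* L_\eps w^\eps - w^\eps, v^\eps)_{X_\eps} = (L_\eps w^\eps, L_\eps v^\eps)_{X_0} + (L_\eps^* L_\eps w^\eps - w^\eps, v^\eps)_{X_\eps}$, where the first summand converges by ordinary weak--strong pairing in $X_0$ and the second vanishes by the lemma and asymptotic boundedness of $v^\eps$. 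You instead use a quadratic perturbation (polarization) trick: compare $\|L_\eps(w^\eps + t v^\eps)\|_{X_0}^2$ with $\|L_\eps\|^2\|w^\eps + t v^\eps\|_{X_\eps}^2$, pass to liminfs using weak lower semicontinuity on the left and \ref{H1} together with $\|w^\eps\|_{X_\eps}\to\|w\|_{X_0}$ on the right, cancel the zeroth-order terms, and let $t\to 0^\pm$. This is a correct argument — the liminf/limsup bookkeeping works out (one needs $\liminf(A_\eps + B_\eps)\le \liminf A_\eps + \limsup B_\eps$, with the roles of liminf and limsup flipped on the $(w^\eps, v^\eps)$ term according to the sign of $t$) and the factor $\|L_\eps\|^2$ is absorbed via \ref{H1}. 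The payoffs differ: the paper's decomposition isolates a reusable structural fact about $L_\eps^* L_\eps$ acting asymptotically as the identity, at the cost of invoking adjoints; your version is more elementary and self-contained, at the cost of more delicate liminf/limsup manipulations and no byproduct lemma.
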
 

In the proof of Proposition \ref{L_conv_prop} we use the following
\begin{lemma} \label{Lstar} 
Suppose that $L_\eps$ are asymptotically contractive. Whenever $(w^\eps)$, $w^\eps \in X^\eps$, converges strongly along $L_\eps$ (to any $w \in X_0$), we have
\[\|L_\eps^* L_\eps w^\eps - w^\eps \|_{X_\eps} \to 0, \]
where $L^*_\eps \colon X_0 \to X_\eps$ is the operator adjoint to $L^\eps$ for $\eps > 0$.  
\end{lemma}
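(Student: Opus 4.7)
The plan is to expand the norm squared and recognize that two of the resulting terms converge to $\|w\|_{X_0}^2$ while the third (the squared norm of $L_\eps^*L_\eps w^\eps$) has limit superior at most $\|w\|_{X_0}^2$, with signs chosen so the limit superior of the whole expression is bounded above by zero.

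Concretely, I would write
\begin{equation*}
\|L_\eps^* L_\eps w^\eps - w^\eps\|_{X_\eps}^2 = \|L_\eps^* L_\eps w^\eps\|_{X_\eps}^2 - 2(L_\eps^* L_\eps w^\eps, w^\eps)_{X_\eps} + \|w^\eps\|_{X_\eps}^2,
\end{equation*}
and then use the defining property of the adjoint to rewrite the cross term as $(L_\eps^* L_\eps w^\eps, w^\eps)_{X_\eps} = (L_\eps w^\eps, L_\eps w^\eps)_{X_0} = \|L_\eps w^\eps\|_{X_0}^2$. Since strong convergence along $L_\eps$ gives $L_\eps w^\eps \to w$ in $X_0$, we get $\|L_\eps w^\eps\|_{X_0} \to \|w\|_{X_0}$, and by definition also $\|w^\eps\|_{X_\eps} \to \|w\|_{X_0}$. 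Thus the second and third terms contribute $-2\|w\|_{X_0}^2 + \|w\|_{X_0}^2 = -\|w\|_{X_0}^2$ in the limit.

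For the remaining term, I would estimate
\begin{equation*}
\|L_\eps^* L_\eps w^\eps\|_{X_\eps} \leq \|L_\eps^*\|_{B(X_0,X_\eps)} \|L_\eps w^\eps\|_{X_0} = \|L_\eps\|_{B(X_\eps,X_0)} \|L_\eps w^\eps\|_{X_0},
\end{equation*}
and invoke hypothesis \ref{H1} together with $\|L_\eps w^\eps\|_{X_0} \to \|w\|_{X_0}$ to conclude $\limsup_{\eps \to 0^+} \|L_\eps^* L_\eps w^\eps\|_{X_\eps}^2 \leq \|w\|_{X_0}^2$. Combining everything, $\limsup_{\eps \to 0^+} \|L_\eps^* L_\eps w^\eps - w^\eps\|_{X_\eps}^2 \leq \|w\|_{X_0}^2 - \|w\|_{X_0}^2 = 0$, which yields the claim.

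I do not expect any real obstacle here: the argument is essentially the classical Hilbert-space fact that weak convergence plus norm convergence implies strong convergence, recast in the $L_\eps$ setting. The only mild subtlety is that we use only an upper bound on $\|L_\eps^* L_\eps w^\eps\|_{X_\eps}$ rather than computing its limit; this works because the cross term carries the right sign to absorb the defect.
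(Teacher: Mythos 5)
Your proof is correct and follows essentially the same route as the paper's: expand the square, convert the cross term via the adjoint to $\|L_\eps w^\eps\|_{X_0}^2$, and bound the first term using \ref{H1}. The only cosmetic difference is the intermediate bound on $\|L_\eps^* L_\eps w^\eps\|_{X_\eps}$ — you use $\|L_\eps\|\,\|L_\eps w^\eps\|_{X_0}$ while the paper uses $\|L_\eps\|^2\,\|w^\eps\|_{X_\eps}$ — but both lead to the same $\limsup$ estimate.
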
 
\begin{proof} 
We calculate 
\[\|L_\eps^* L_\eps w^\eps - w^\eps \|_{X_\eps}^2 = \|L_\eps^* L_\eps w^\eps\|_{X_\eps}^2 - 2 (L_*^\eps L_\eps w^\eps, w^\eps)_{X_\eps} + \|w^\eps\|_{X_\eps}^2.  \]
By strong convergence of $w^\eps$ along $L_\eps$, $\|w^\eps\|_{X_\eps} \to \|w\|_{X_0}$ and 
\[ (L_*^\eps L_\eps w^\eps, w^\eps)_{X_\eps} = (L_\eps w^\eps, L_\eps w^\eps)_{X_0} = \|L_\eps w^\eps\|_{X_0}^2 \to \|w\|_{X_0}^2 . \]
On the other hand, since $\|L^*_\eps\|_{B(X_0, X_\eps)} = \|L_\eps\|_{B(X_\eps, X_0)}$ for $\eps >0$, we have by \ref{H1}
\[ \limsup_{\eps\to 0^+} \|L_\eps^* L_\eps w^\eps\|_{X_\eps} \leq \limsup_{\eps\to 0^+}\|L_\eps\|_{B(X_\eps, X_0)}^2 \|w^\eps\|_{X_\eps} \leq \|w\|_{X_0}.  \]
Thus, 
\[ \limsup_{\eps\to 0^+} \|L_\eps^* L_\eps w^\eps - w^\eps \|_{X_\eps}^2 \leq \|w\|^2_{X_0} - 2 \|w\|^2_{X_0} + \|w\|^2_{X_0} = 0.\qedhere\]
\end{proof} 

\begin{proof}[Proof of Proposition \ref{L_conv_prop}]
(i) Using \ref{H1} we have 
\begin{equation*} 
\|w\|_{X_0} \leq \liminf_{\eps \to 0^+} \|L_\eps w^\eps\|_{X_\eps} \leq \limsup_{\eps \to 0^+} \|L_\eps w^\eps\|_{X_\eps} \leq \limsup_{\eps \to 0^+} \|w^\eps\|_{X_\eps} = \|w\|_{X_0},
\end{equation*} 
hence $\lim_{\eps \to 0^+} \|L_\eps w^\eps\|_{X_\eps} = \|w\|_{X_0}$. This implies that $L_\eps w^\eps \to w$ in $X_0$, which is the only thing we needed to check. 

(ii) Suppose that $w^\eps$ converge strongly to $w$ along $L_\eps$ and $v^\eps$ converge weakly to $v$ along $L_\eps$. We rewrite 
\[ (w^\eps, v^\eps)_{X_\eps} = (L_\eps^* L_\eps w^\eps, v^\eps)_{X_\eps} + (L_\eps^* L_\eps w^\eps - w^\eps, v^\eps)_{X_\eps}.  \]
We have 
\[ (L_\eps^* L_\eps w^\eps, v^\eps)_{X_\eps} = (L_\eps w^\eps, L_\eps v^\eps)_{X_0} \to (w, v)\]
and, using Lemma \ref{Lstar} together with asymptotic boundedness of $\|v^\eps\|_{X_\eps}$,  
\[|(L_\eps^* L_\eps w^\eps - w^\eps, v^\eps)_{X_\eps}|\leq \|L_\eps^* L_\eps w^\eps - w^\eps\|_{X_\eps} \| v^\eps\|_{X_\eps} \to 0  \]
which concludes the proof of \eqref{L_weak_strong}. 

(iii) Now take $a_1, a_2 \in \mathbb{R}$ and suppose that $w_1^\eps$, $w_2^\eps$ converge weakly to $w^1$, $w^2$ along $L_\eps$. Then clearly $\|a_1 w_1^\eps + a_2 w_2^\eps\|_{X_\eps}$ is asymptotically bounded and \[L_\eps(a_1 w_1^\eps + a_2 w_2^\eps) = a_1 L_\eps w_1^\eps + a_2 L_\eps w_2^\eps \rightharpoonup a_1 w_1 + a_2 w_2.
\]
If $w_1^\eps$, $w_2^\eps$ converge strongly to $w^1$, $w^2$ along $L_\eps$, then using part (ii) of the Proposition,
\begin{multline*} 
\|a_1 w_1^\eps + a_2 w_2^\eps\|_{X_\eps}^2 = |a_1|^2 \|w_1^\eps\|_{X_\eps}^2 + 2 a_1 a_2 (w_1^\eps, w_2^\eps)_{X_\eps} + |a_2|^2 \|w_2^\eps\|_{X_\eps}^2 \\ \to  |a_1|^2 \|w_1\|_{X_0}^2 + 2 a_1 a_2 (w_1, w_2)_{X_0} + |a_2|^2 \|w_2\|_{X_0}^2 = \|a_1 w_1 + a_2 w_2\|_{X_0}^2
\end{multline*} 
and 
\[L_\eps(a_1 w_1^\eps + a_2 w_2^\eps) = a_1 L_\eps w_1^\eps + a_2 L_\eps w_2^\eps \to a_1 w_1 + a_2 w_2.
\]
\end{proof} 

We observe that asymptotically contractive operators may be very far from either (asymptotically) injective or (asymptotically) surjective. For example, zero operators, i.e., $L_\eps \colon X_\eps \to X_0$ such that $L_\eps x_\eps = 0$ for $x_\eps \in X_\eps$, $\eps >0$ are always asymptotically contractive. Thus, the strength of particular results obtained from our theory (see Theorem \ref{thm:Mosco} below) depends on the family $L_\eps$. Allowing the lack of injectivity will often be natural in the examples. On the other hand, lack of asymptotic surjectivity might mean that the space $X_0$ is simply too large. Indeed, suppose that $X_\eps$ are connected to $X_0$ by $L_\eps$, and that $\iota \colon X_0\hookrightarrow \overline{X}_0$ is an embedding of $X_0$ into a larger Hilbert space $\overline X_0$. Then $X_\eps$ are connected to $\overline X_0$ by $\iota \circ L_\eps$. Moreover, if $L_\eps$ are asymptotically contractive, then so are $\iota \circ L_\eps$. The assumption of asymptotic surjectivity in the definition of Mosco convergence of spaces prevents this, leading to the following weak "uniqueness" property. 

\begin{proposition} \label{uniq} 
Suppose that $\iota \colon X_0 \hookrightarrow \overline X_0$ is an embedding of Hilbert spaces. If $\iota \circ L_\eps$ are asymptotically surjective, then $\iota$ is an isomorphism onto $\overline X_0$. 
\end{proposition}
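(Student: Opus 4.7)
The plan is to argue by contradiction: assume $\iota(X_0) \subsetneq \overline{X}_0$ and exhibit a nonzero vector in $\overline{X}_0$ that cannot possibly arise as a weak limit of a sequence in $\iota(X_0)$, contradicting asymptotic surjectivity of $\iota \circ L_\eps$.

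The first step is to note that, as $\iota$ is an embedding of Hilbert spaces, its image $\iota(X_0)$ is a closed linear subspace of $\overline{X}_0$. Hence, if $\iota$ fails to be surjective, the orthogonal decomposition $\overline{X}_0 = \iota(X_0) \oplus \iota(X_0)^\perp$ provides some $\bar w \in \iota(X_0)^\perp$ with $\bar w \neq 0$. This is where the "embedding" (closed-range) hypothesis is used in an essential way; without it we could only conclude density of $\iota(X_0)$ in $\overline{X}_0$.

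Next, I would apply \ref{H2} to $\iota \circ L_\eps$ at this $\bar w$: there exist $w^\eps \in X_\eps$ with $\iota L_\eps w^\eps \rightharpoonup \bar w$ in $\overline{X}_0$. Testing this weak convergence against $\bar w$ itself gives
\[
(\iota L_\eps w^\eps, \bar w)_{\overline{X}_0} \longrightarrow (\bar w, \bar w)_{\overline{X}_0} = \|\bar w\|_{\overline{X}_0}^2 > 0.
\]
On the other hand, each element $\iota L_\eps w^\eps$ lies in $\iota(X_0)$ and is therefore orthogonal to $\bar w$, so the left-hand side vanishes identically in $\eps$. This contradiction shows $\iota$ must be surjective.

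Finally, surjectivity together with the injectivity and boundedness inherent in the notion of embedding yields a bounded linear bijection $X_0 \to \overline{X}_0$; the open mapping theorem then delivers boundedness of $\iota^{-1}$, completing the identification of $\iota$ as an isomorphism onto $\overline{X}_0$. The only genuine difficulty is conceptual rather than technical: pinning down that "embedding" provides closed range, which is precisely what allows the orthogonal-complement argument to go through instead of yielding merely density.
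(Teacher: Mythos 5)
Your proof is correct, but it takes a genuinely different route from the paper's. The paper argues directly: it fixes an arbitrary $\overline w \in \overline X_0$, invokes asymptotic surjectivity to produce $w^\eps$ with $\iota L_\eps w^\eps \rightharpoonup \overline w$, transfers boundedness through the embedding to conclude that $(L_\eps w^\eps)$ is bounded in $X_0$, extracts a weakly convergent subsequence $L_{\eps_k} w^{\eps_k} \rightharpoonup w$ in $X_0$, and then uses weak continuity of $\iota$ together with uniqueness of weak limits to get $\iota w = \overline w$. You instead argue by contradiction via the orthogonal complement: since the embedding forces $\iota(X_0)$ to be closed, failure of surjectivity would yield a nonzero $\bar w \perp \iota(X_0)$, and testing the weak convergence $\iota L_\eps w^\eps \rightharpoonup \bar w$ against $\bar w$ gives $0 \to \|\bar w\|^2 > 0$, a contradiction. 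Both arguments hinge on closedness of the range; yours is slightly shorter and more transparent about exactly where that hypothesis bites, while the paper's is constructive in that it actually exhibits the preimage $w$. Your final appeal to the open mapping theorem is harmless but redundant: the hypothesis already guarantees that $\iota$ is an isomorphism onto its image, so once the image is shown to be all of $\overline X_0$, no further argument for boundedness of $\iota^{-1}$ is needed.
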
 

\begin{proof} 
By definition, $\iota$ is an isomorphism onto its image. We only need to show that the image is the whole $\overline X_0$. Let $\overline{w} \in \overline{X}_0$. By asymptotic surjectivity, there exists a sequence $(w^\eps)$, $w^\eps \in X_\eps$ such that 
\begin{equation} \label{iotaLeps} 
\iota \circ L_\eps w^\eps \rightharpoonup \overline w. 
\end{equation} 
In particular the sequence $(\iota \circ L_\eps w^\eps) \subset \overline{X}_0$ is bounded. As $\iota$ is an isomorphism onto its image, also $(L_\eps w^\eps) \subset X_0$ is bounded. Therefore, there exists a subsequence $(\eps_k)$ and $w \in X_0$ such that $L_\eps w^\eps_k \rightharpoonup w$ in $X_0$. By continuity of $\iota$ and \eqref{iotaLeps}, $\overline{w} = \iota\,w$. 
\end{proof}
We note that in the condition \ref{H2} defining asymptotic surjectivity we could also require strong convergence $L_\eps w^\eps \to w$, or even weak/strong convergence of $w^\eps$ along $L_\eps$. However, weak convergence of $L_\eps u^\eps$ is enough for Proposition \ref{uniq} to hold.

The present concept of Mosco convergence of Hilbert spaces is not comparable to (pointed) Gromov--Hausdorff convergence. The latter is a convergence in the space of \emph{isometry classes} of (locally) compact metric spaces. It is usually defined as convergence in the metric arising as the infimum of Hausdorff distances between images under isometric embeddings into a common space. However, we can give a sufficient condition for Mosco convergence in a similar language. 

\begin{proposition} 
	Suppose that there exists a Hilbert space $\overline{X}$ and isometries $\iota_0 \colon X_0 \to \overline{X}$, $\iota_\eps \colon X_\eps \to \overline{X}$, $\eps >0$. Let $P$ denote the orthogonal projection of $\overline{X}$ onto $\iota_0\,X_0$. Then $X_\eps$ are connected to $X_0$ by $L_\eps\! :=\iota_0^{-1}\circ P\circ \iota_\eps$. 

    If moreover for every $x_0 \in X_0$ there exists a sequence $x_\eps \in X_\eps$ such that $\iota_\eps x_\eps \rightharpoonup \iota_0 x_0$ in  $\overline{X}$, then $X_\eps$ Mosco-converge to $X_0$ along $L_\eps$. 	
\end{proposition}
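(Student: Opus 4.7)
The plan is to split the verification into two parts: first, showing that $L_\eps$ is a well-defined bounded linear operator satisfying \ref{H1}, and second, under the additional approximation hypothesis, verifying \ref{H2}. Throughout, I view $\iota_0$ as an isometric isomorphism of $X_0$ onto the closed subspace $\iota_0\, X_0 \subset \overline X$ (closed because $X_0$ is complete and $\iota_0$ is an isometry), so that $\iota_0^{-1}$ is a well-defined bounded linear map from $\iota_0\, X_0$ onto $X_0$, and $P\,\overline X = \iota_0\, X_0$ is the exact domain on which $\iota_0^{-1}$ is applied.

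For the first claim, the composition $L_\eps = \iota_0^{-1}\circ P\circ \iota_\eps$ is a composition of bounded linear maps, hence bounded and linear. For asymptotic contractivity, a direct computation suffices: for any $x \in X_\eps$,
\[
\|L_\eps x\|_{X_0} = \|\iota_0^{-1} P \iota_\eps x\|_{X_0} = \|P \iota_\eps x\|_{\overline X} \leq \|\iota_\eps x\|_{\overline X} = \|x\|_{X_\eps},
\]
using that $\iota_0$, $\iota_\eps$ are isometries and $P$ has operator norm at most $1$. This yields $\|L_\eps\|_{B(X_\eps,X_0)} \leq 1$ for every $\eps > 0$, so \ref{H1} holds in the sharper pointwise form.

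For the second claim, I need to establish asymptotic surjectivity of $L_\eps$. Given $w \in X_0$, the hypothesis furnishes $w^\eps \in X_\eps$ with $\iota_\eps w^\eps \rightharpoonup \iota_0\, w$ in $\overline X$. Since bounded linear maps preserve weak convergence, applying $P$ gives $P\, \iota_\eps w^\eps \rightharpoonup P \iota_0 \, w = \iota_0\, w$, and then applying $\iota_0^{-1}$ (bounded on $\iota_0 X_0$, with weak convergence in the closed subspace $\iota_0 X_0$ equivalent to weak convergence in $\overline X$ by Hahn--Banach) yields $L_\eps w^\eps \rightharpoonup w$ in $X_0$, as required for \ref{H2}.

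No substantial obstacle arises; the only point to watch is that $\iota_0^{-1}$ is applied only to vectors in $\iota_0\, X_0$, which is guaranteed because $P$ is precisely the projection onto this subspace. Consequently, the argument reduces to the three short computations above, and the entire proof should occupy only a few lines.
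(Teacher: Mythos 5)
Your argument is correct and follows essentially the same route as the paper: the norm bound comes from writing $\|L_\eps x\|_{X_0}=\|P\iota_\eps x\|_{\overline X}\le\|\iota_\eps x\|_{\overline X}=\|x\|_{X_\eps}$, and \ref{H2} follows by pushing the weak convergence $\iota_\eps x_\eps\rightharpoonup\iota_0 x_0$ through the bounded maps $P$ and $\iota_0^{-1}$. You spell out the (true but easy) point that weak convergence in the closed subspace $\iota_0 X_0$ agrees with weak convergence inherited from $\overline X$, which the paper leaves implicit; otherwise the two proofs are the same.
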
 

\begin{proof} 
	Condition \ref{H1} is clearly satisfied, as $\iota_0^{-1}$, $L_\eps$, $\iota_\eps$ all have norms bounded by $1$.  As for the second part, we have
	%\[\|x_\eps\|_{X_{\eps}} = \|\iota_\eps x_\eps\|_{\overline{X}} \to \|\iota_0 x_0\|_{\overline{X}} = \|x_0\|_{X_0},\]
	\[L_{\eps} x_{\eps} = \iota_0^{-1}\, P\, \iota_\eps x_\eps \rightharpoonup \iota_0^{-1}\, P\, \iota_0 x_0 = x_0.\]
\end{proof}

We recall that any convex, lower semicontinuous functional $\cE\colon X \to [0,\infty]$ on a Hilbert space $X$ generates a gradient flow \cite[Theorem 3.2]{Brezis}. That is, for any $u_0 \in \overline{D(\cE)}$, $T>0$, there exists exactly one $u\in C([0,T],X) \cap W^{1,1}_{loc}(]0,T])$ satisfying 
\[ u_t \in - \partial \cE(u) \quad \text{for a.\ e. } t \in ]0,T[, \qquad u(0) = u_0.\]
We call the resulting map $S\colon \overline{D(\cE)} \to C([0,T],X)$ the gradient flow of $\cE$. 
\begin{definition}\label{con-flo}
	Suppose that $X_\eps$ converge to $X_0$ along $L_\eps$. Let $S_0$, $S_\eps$ be gradient flows of $\cE_0 \colon X_0 \to [0, \infty]$, $\cE_\eps \colon X_\eps \to [0, \infty]$ respectively. We say that $S_\eps$ uniformly converge to $S_0$ along $L_\eps$ if 
	\[L_\eps S_{\eps} u_0^\eps \to S_0 u_0 \quad \text{in } C([0,T],X_0) \quad \text{and} \quad \|S_{\eps} u_0^\eps\|_{X_\eps} \to \|S_0 u_0\|_{X_0} \text{in } C([0,T])\]
	whenever $u_0^\eps$ converge strongly to $u_0$ along $L_\eps$, $u_0 \in \overline{D(\cE_0)}$, $u_0^\eps \in \overline{D(\cE_\eps)}$ for $\eps >0$.  
\end{definition} 

Here comes the generalization of the Mosco-convergence of functionals to the case where the underlying function spaces %converge along 
are connected by $L_\eps$. We notice that if $X_\eps = X$ and $L_\eps = Id$, then the new notion coincides with the classical one, see \cite{attouch}.
\begin{definition} \label{mos-con}
	%\ml{Let $X_\eps$ be Hilbert spaces connected to $X_0$ by asymptotically contractive operators $L_\eps$.} 
    Let $\cE_0 \colon X_0 \to [0, \infty]$, $\cE_\eps \colon X_\eps \to [0, \infty]$ be convex and lower semicontinuous. We say that $\cE_\eps$ Mosco-converge to $\cE_0$ along $L_\eps$ if 
	\begin{enumerate}[label={\rm (H\arabic*)}]
    \setcounter{enumi}{2}
		\item \label{H3} whenever $\limsup_{\eps \to 0^+} \|w^\eps\|_{X_{\eps}} < \infty$ and $L_\eps w^\eps \rightharpoonup w$ in $X_0$,  
		\[\liminf_{\eps \to 0^+} \cE_\eps(w^\eps) \geq \cE_0(w),\]
		\item \label{H4} for every $w \in D(\cE_0)$ there exists $(w^\eps)$, $w^\eps \in X_\eps$ such that 
  \[ L_\eps w^\eps \to w \text{ in } X_0, \quad \|w^\eps\|_{X_\eps}\to \|w\|_{X_0},\quad \cE_\eps(w^\eps) \to \cE_0(w).\]
	\end{enumerate} 
\end{definition}
\noindent 
%\ml{We observe that the assumption \ref{H4} only asserts the existence of recovery sequences for $w \in D(\cE_0)$. If we required it for all $w \in X_0$, it would imply that $X_\eps$ Mosco-converge to $X_0$ along $L_\eps$.}

We are now ready to state our main abstract theorem. It extends the classical result of Mosco \cite{mosco}, see also \cite{watanabe}. 

\begin{theorem} \label{thm:Mosco} Suppose that $X_\eps$ converge to $X_0$ along $L_\eps$. Let $S_\eps$, $S_0$ be the gradient flows of $\cE_\eps\colon X_\eps \to [0, \infty]$, $\cE_0 \colon X_0 \to [0, \infty]$ respectively. If $\cE_\eps$ Mosco-converge to $\cE_0$ along $L_\eps$, then $S_\eps$ uniformly converge to $S_0$ along $L_\eps$.
\end{theorem}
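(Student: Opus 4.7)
The plan is to follow the classical strategy for convergence of gradient flows under Mosco convergence of convex functionals (see \cite{Brezis,mosco,watanabe}): first prove that \ref{H3}--\ref{H4} yield convergence of the resolvents $J^\eps_\lambda := (I + \lambda \partial \cE_\eps)^{-1}$ along $L_\eps$, and then bootstrap this to convergence of the semigroups via the Crandall--Liggett exponential formula. The genuinely new ingredient compared to the fixed-space theorem is a systematic replacement of standard Hilbert-space manipulations by their $L_\eps$-analogues provided by Proposition \ref{L_conv_prop}.

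\emph{Stage 1 (resolvent convergence).} Fix $\lambda > 0$ and suppose $w^\eps$ converges strongly along $L_\eps$ to $w$. The claim is that $v^\eps := J^\eps_\lambda w^\eps$ converges strongly along $L_\eps$ to $v := J^0_\lambda w$ and that $\cE_\eps(v^\eps) \to \cE_0(v)$. The element $v^\eps$ is the unique minimizer of $z \mapsto \cE_\eps(z) + \tfrac{1}{2\lambda}\|z - w^\eps\|_{X_\eps}^2$. Testing against the recovery sequence from \ref{H4} for $v \in D(\cE_0)$ and using Proposition \ref{L_conv_prop}(ii)--(iii) to pass to the limit in the quadratic penalty yields
\[ \limsup_{\eps \to 0^+}\left( \cE_\eps(v^\eps) + \tfrac{1}{2\lambda}\|v^\eps - w^\eps\|_{X_\eps}^2 \right) \leq \cE_0(v) + \tfrac{1}{2\lambda}\|v - w\|_{X_0}^2. \]
In particular $v^\eps$ is asymptotically bounded, so along a subsequence $L_\eps v^\eps \rightharpoonup \tilde v$ in $X_0$; then \ref{H1} applied to the penalty and \ref{H3} applied to the energy show that $\tilde v$ minimizes the limit functional $z \mapsto \cE_0(z) + \tfrac{1}{2\lambda}\|z - w\|_{X_0}^2$. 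Uniqueness of that minimizer forces $\tilde v = v$, and saturating the liminf-limsup chain separately yields $\cE_\eps(v^\eps) \to \cE_0(v)$ and $\|v^\eps - w^\eps\|_{X_\eps} \to \|v - w\|_{X_0}$. Expanding the latter square and invoking Proposition \ref{L_conv_prop}(ii) on the cross term gives $\|v^\eps\|_{X_\eps} \to \|v\|_{X_0}$; together with $L_\eps v^\eps \rightharpoonup v$, Proposition \ref{L_conv_prop}(i) then upgrades the convergence to strong along $L_\eps$.

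\emph{Stage 2 (from resolvents to the flow).} The Crandall--Liggett exponential formula gives
\[ S_\eps(t) u_0^\eps = \lim_{n \to \infty} (J^\eps_{t/n})^n u_0^\eps \]
with a rate of approximation in $X_\eps$ that is uniform in $\eps$ once $\cE_\eps(u_0^\eps)$ is. Iterating Stage 1 one resolvent at a time, for each fixed $n$ and $t$ the composition $(J^\eps_{t/n})^n u_0^\eps$ converges strongly along $L_\eps$ to $(J^0_{t/n})^n u_0$, including convergence of norms. A triangle-inequality argument, letting first $\eps \to 0^+$ for fixed $n$ and then $n \to \infty$, then produces $L_\eps S_\eps(t) u_0^\eps \to S_0(t) u_0$ in $X_0$ together with $\|S_\eps(t) u_0^\eps\|_{X_\eps} \to \|S_0(t) u_0\|_{X_0}$, uniformly in $t \in [0,T]$.

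\emph{Stage 3 (initial data) and the main obstacle.} The previous stage requires $\cE_\eps(u_0^\eps)$ to be bounded, which the mere assumption that $u_0^\eps$ converges strongly along $L_\eps$ to $u_0$ does not guarantee. The main technical obstacle I anticipate is precisely this: when $u_0 \in D(\cE_0)$, the recovery sequence from \ref{H4} yields an alternative sequence $\tilde u_0^\eps$ converging strongly along $L_\eps$ to $u_0$ with $\cE_\eps(\tilde u_0^\eps) \to \cE_0(u_0) < \infty$, so the given flow $S_\eps(\cdot) u_0^\eps$ can be compared with $S_\eps(\cdot) \tilde u_0^\eps$ via the nonexpansiveness
\[ \|S_\eps(t) u_0^\eps - S_\eps(t)\tilde u_0^\eps\|_{X_\eps} \leq \|u_0^\eps - \tilde u_0^\eps\|_{X_\eps}, \]
whose right-hand side tends to zero by Proposition \ref{L_conv_prop}(iii). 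The case $u_0 \in \overline{D(\cE_0)} \setminus D(\cE_0)$ follows by a further density argument combined with another use of nonexpansiveness; this reduction is standard in the classical fixed-space setting, but must be carried out carefully here since ``close'' initial data in $X_0$ need not correspond to close sequences in $X_\eps$ unless one also has control on the associated approximations.
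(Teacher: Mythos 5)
Your proposal is correct and follows essentially the same route as the paper's proof: your Stage~1 is precisely the content of Lemma~\ref{lem:res} (strong resolvent convergence and energy convergence along $L_\eps$), and Stages~2--3 reproduce the paper's discretize--estimate--regularize structure. The only cosmetic differences are that the paper uses the affine minimizing-movements interpolation together with the a posteriori bound $\max_{t\in[0,T]}\|u(t)-u^N(t)\|^2\le\cE(u_0)/N$ of \cite{NSV} rather than the Crandall--Liggett exponential formula (the latter variant is discussed after Corollary~\ref{CRe}, where it is noted that the classical Crandall--Liggett rate controls $\|\partial^0\cE_\eps\|$ rather than $\cE_\eps$), and that the obstacle you correctly isolate in Stage~3 is dissolved at once in the paper by replacing $u_0^\eps$ with $\cJ^\eps_\lambda(u_0^\eps)$, for which Lemma~\ref{lem:res} already supplies strong convergence along $L_\eps$ and a uniform energy bound, rendering the separate recovery-sequence and density arguments unnecessary.
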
 \noindent
In other words, given a family of bounded linear operators $L_\eps \colon X_\eps \to X_0$ and functionals $\cE_\eps\colon X_\eps \to [0, \infty]$, $\cE_0 \colon X_0 \to [0, \infty]$, in order to show convergence of their gradient flows along $L_\eps$, one needs to check assumptions \ref{H1}--\ref{H4}. 

The proof of Theorem \ref{thm:Mosco} is presented in a separate section.
\section{Proof of the abstract convergence theorem} \label{s3}%\ref{thm:Mosco}}
The proof is based on the \emph{minimizing movements scheme} which we will now recall. Let $X$ be a Hilbert space and $\cE \colon X \to [0, \infty]$ a convex, proper, lower semicontinuous functional on $X$. Given $w \in X$, $\lambda >0$ we define for $v \in X$ 
\[ \cE_w^\lambda(v) = \lambda \cE(v) + \frac{1}{2}\|w-v\|^2.\]
The resulting strictly convex, proper, lower semicontinuous functional $\cE_w^\lambda$ on $X$ has a unique minimizer, which we denote $\cJ_\lambda(w)$ ($\cJ_\lambda$ is the \emph{resolvent operator} associated with $\cE$). Given $u_0 \in X$, we define 
\[u^N_k := \left(\cJ_\frac{T}{N}\right)^k(u_0)\] 
for $N \in \bN$, $k=0,\ldots, N-1$. We then construct a minimizing movements approximation $u^N \in C([0,T],X)$ of $u$ as the affine interpolation of $u^N_k$, i.\,e. 
\[ u^N(t) = (k+1 - Nt/T) u^N_k + (Nt/T-k) u^N_{k+1} \quad \text{for } t \in [Tk/N, T(k+1)/N[, \quad k=0,\ldots, N-1. \]
Then $u^N$ converges uniformly to the trajectory $u$. If $u_0$ in $D(\cE)$, then we have a quantitative estimate \cite{NSV} 
\[\max_{t \in [0,T]} \|u(t) - u^N(t)\|^2_X \leq \cE(u_0)/N. \]

In our setting, we deal with a sequence of functionals $\cE_\eps \colon X_\eps \to [0, \infty[$, $\cE_0 \colon X_0 \to [0, \infty[$. We denote the associated resolvent operators by $\cJ_\lambda^\eps$, $\cJ_\lambda^0$. We have:
\begin{lemma}\label{lem:res} 
Suppoose that $X_\eps$ are Hilbert spaces connected to $X_0$ by asymptotically contractive operators $L_\eps$, and $\cE_\eps$ Mosco-converge to $\cE$ along $L_\eps$. Let $\lambda >0$. If $w^\eps$ converges strongly to $w$ along $L_\eps$, then $\cJ_\lambda^\eps(w_\eps)$ strongly converges to $\cJ_\lambda^0(w)$ along $L_\eps$. Moreover, $\cE_\eps(\cJ_\lambda^\eps(w_\eps))\to \cE_0(\cJ_\lambda^0(w))$ as $\eps \to 0^+$. 
\end{lemma}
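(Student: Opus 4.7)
Write $u^\eps := \cJ_\lambda^\eps(w^\eps)$ and $u := \cJ_\lambda^0(w)$. The plan is the standard $\Gamma$-convergence recipe for minimizers, adapted to the setting of convergence along $L_\eps$: bound the sequence, extract a weak cluster point, identify it via a variational inequality, and upgrade weak to strong convergence by matching norms.

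First, I would establish that $(u^\eps)$ is asymptotically bounded and that $(\cE_\eps(u^\eps))$ is bounded. Applying the minimization property of $u^\eps$ against the recovery sequence $v^\eps$ from \ref{H4} associated with the element $u \in D(\cE_0)$, I get
\[ \lambda \cE_\eps(u^\eps) + \tfrac{1}{2}\|w^\eps - u^\eps\|_{X_\eps}^2 \leq \lambda \cE_\eps(v^\eps) + \tfrac{1}{2}\|w^\eps - v^\eps\|_{X_\eps}^2. \]
Proposition \ref{L_conv_prop}(iii) combined with the definition of strong convergence along $L_\eps$ gives $\|w^\eps - v^\eps\|_{X_\eps} \to \|w - u\|_{X_0}$, while $\cE_\eps(v^\eps) \to \cE_0(u) < \infty$. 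Thus the right-hand side is bounded, and since $(w^\eps)$ is asymptotically bounded, so is $(u^\eps)$. By \ref{H1}, some subsequence satisfies $L_\eps u^\eps \rightharpoonup \tilde u$ in $X_0$ for some $\tilde u \in X_0$.

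Next I would show $\tilde u = u$. The key auxiliary fact is that weak convergence along $L_\eps$ is lower semicontinuous with respect to the norm: if $L_\eps z^\eps \rightharpoonup z$ in $X_0$ and $(z^\eps)$ is asymptotically bounded, then $\|z\|_{X_0} \leq \liminf_\eps \|z^\eps\|_{X_\eps}$. This follows from weak lower semicontinuity of the norm in $X_0$ together with $\|L_\eps z^\eps\|_{X_0} \leq \|L_\eps\|_{B(X_\eps,X_0)} \|z^\eps\|_{X_\eps}$ and \ref{H1} (letting $\eps_0$ be such that $\|L_\eps\| \leq 1 + \delta$ for $\eps < \eps_0$, and sending $\delta \to 0^+$ at the end). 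Applied to $z^\eps = w^\eps - u^\eps$ and $z = w - \tilde u$ (recalling $L_\eps w^\eps \to w$ strongly and Proposition \ref{L_conv_prop}(iii)), this yields $\liminf \|w^\eps - u^\eps\|_{X_\eps}^2 \geq \|w - \tilde u\|_{X_0}^2$. For any test element $v \in D(\cE_0)$, take a recovery sequence $v^\eps$ from \ref{H4} and pass to the liminf in the minimization inequality for $u^\eps$ against $v^\eps$, using \ref{H3} for the energy term. This gives
\[ \lambda \cE_0(\tilde u) + \tfrac{1}{2}\|w - \tilde u\|_{X_0}^2 \leq \lambda \cE_0(v) + \tfrac{1}{2}\|w - v\|_{X_0}^2 \qquad \text{for every } v \in D(\cE_0), \]
and trivially for $v \notin D(\cE_0)$. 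Hence $\tilde u$ minimizes $\cE_{0,w}^\lambda$, so by uniqueness $\tilde u = u$.

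Now I would upgrade to strong convergence by taking $v = u$ (so that $v^\eps \to u$ strongly along $L_\eps$ with $\cE_\eps(v^\eps) \to \cE_0(u)$) in the same minimization inequality. The right-hand side converges to $\lambda \cE_0(u) + \tfrac{1}{2}\|w-u\|_{X_0}^2$, and matching this upper bound against the two lower bounds from \ref{H3} and from the norm lower semicontinuity above forces both
\[ \cE_\eps(u^\eps) \to \cE_0(u) \qquad \text{and} \qquad \|w^\eps - u^\eps\|_{X_\eps} \to \|w - u\|_{X_0}. \]
Expanding $\|w^\eps - u^\eps\|_{X_\eps}^2 = \|w^\eps\|_{X_\eps}^2 - 2(w^\eps, u^\eps)_{X_\eps} + \|u^\eps\|_{X_\eps}^2$ and using Proposition \ref{L_conv_prop}(ii) for the cross term (strong$\times$weak along $L_\eps$) gives $\|u^\eps\|_{X_\eps} \to \|u\|_{X_0}$. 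Combined with $L_\eps u^\eps \rightharpoonup u$, Proposition \ref{L_conv_prop}(i) yields strong convergence of $u^\eps$ to $u$ along $L_\eps$. Finally, since the limit $u$ is uniquely determined, the usual subsequence principle promotes these conclusions from the extracted subsequence to the full sequence $\eps \to 0^+$.

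The main obstacle is really the norm lower semicontinuity step: because $L_\eps$ is only asymptotically contractive (not an isometry), $\|\cdot\|_{X_\eps}$ and $\|L_\eps\cdot\|_{X_0}$ are genuinely distinct, and one must exploit \ref{H1} with an $\eps$-of-slack argument to recover lower semicontinuity. Once this is in hand, the rest of the argument is a direct transcription of the classical Mosco–Attouch proof.
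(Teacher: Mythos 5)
Your proof is correct and follows essentially the same strategy as the paper's: bound the resolvents via the Moreau comparison against a recovery sequence, extract a weak cluster point along $L_\eps$, identify it as $\cJ_\lambda^0(w)$ using \ref{H3} together with the (asymptotic) norm lower semicontinuity you isolate, then match the upper and lower bounds with $v=u$ to force convergence of both the energy and $\|w^\eps-\cJ_\lambda^\eps(w^\eps)\|_{X_\eps}$. The only cosmetic difference is at the very end: the paper upgrades strong convergence of $w^\eps-\cJ_\lambda^\eps(w^\eps)$ via Proposition~\ref{L_conv_prop}(i) and then subtracts using (iii), while you expand the squared norm and use (ii) for the cross term to get $\|u^\eps\|_{X_\eps}\to\|u\|_{X_0}$ directly --- both routes are equivalent and equally short.
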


\begin{proof} 
Suppose that $w^\eps$ converges strongly to $w$ along $L_\eps$. Let $v_* \in D(\cE_0)$. By Mosco-convergence, there exists a sequence $v_*^\eps$ strongly convergent to $v_*$ along $L_\eps$ and such that $\cE_\eps(v_*^\eps) \to \cE(v_*)$. By definition of $\cJ_\lambda^\eps$,  
\begin{equation} \label{Moreau_comp}
\lambda \cE_\varepsilon(\cJ_\lambda^\eps(w^\eps)) + \tfrac{1}{2}\|w^\eps- \cJ_\lambda^\eps(w^\eps)\|_{X_\eps}^2 \leq \lambda \cE_\eps(v_*^\eps) + \tfrac{1}{2}\|w^\eps-v_*^\eps\|_{X_\eps}^2.
\end{equation} 
Since the r.\,h.\,s.\ is asymptotically bounded, we deduce that $\|w^\eps- \cJ_\lambda^\eps(w^\eps)\|_{X_\eps}$ and therefore also $\|\cJ_\lambda^\eps(w^\eps)\|_{X_\eps}$ is asymptotically bounded. Thus, there is a subsequence $(\eps_k)$, $\eps_k \to 0$ and $v \in X_0$ such that $L_{\eps_k} \cJ_\lambda^{\eps_k}(w^{\eps_k}) \rightharpoonup v$ in $X_0$ as $k \to \infty$. By Proposition \ref{L_conv_prop}(iii), $w^\eps-v_*^\eps$ converges strongly to $w - v_*$ along $L_\eps$. Using \ref{H1} and \ref{H3}, we now pass to the limit with \eqref{Moreau_comp} along the sequence $\eps_k$:
\begin{multline}\label{Mosco_est_1} \lambda \cE_0(v) + \tfrac{1}{2}\|w- v\|_{X_0}^2 
 \leq \liminf_{k \to \infty} \lambda \cE_{\varepsilon_k}\left(\cJ_\lambda^{\eps_k}(w^{\eps_k})\right) + \liminf_{k \to \infty}\tfrac{1}{2}\|L_{\eps_k}w^{\eps_k}- L_{\eps_k}\cJ_\lambda^{\eps_k}(w^{\eps_k})\|_{X_0}^2\\ \leq \liminf_{k \to \infty} \lambda \cE_{\varepsilon_k}\left(\cJ_\lambda^{\eps_k} (w^{\eps_k})\right) + \liminf_{k \to \infty}  \tfrac{1}{2}\|w^{\eps_k}- \cJ_\lambda^{\eps_k}(w^{\eps_k})\|_{X_{\eps_k}}^2 \\ \leq \lim_{\eps\to 0^+}\left(\lambda \cE_\eps(v_*^\eps) + \tfrac{1}{2}\|w^\eps-v_*^\eps\|_{X_\eps}^2\right) = \lambda \cE_0(v_*) + \tfrac{1}{2}\|w-v_*\|_{X_0}^2 . 
\end{multline} 
Since $v_*$ is an arbitrary element of $D(\cE)$, we infer that $v = \cJ_\lambda^0(w)$. By the usual argument involving uniqueness of the limit, we actually have $L_\eps \cJ_\lambda^\eps(w^\eps) \rightharpoonup \cJ_\lambda^0(w)$, without restriction to a subsequence. 

Next, choosing $v_* = \cJ_\lambda^0(w)$ and repeating the sequence of estimates \eqref{Mosco_est_1} in a slightly more careful manner, we get from \eqref{Moreau_comp}: 
\begin{multline} \label{Mosco_est_2} \lambda \cE_0(\cJ_\lambda^0(w)) + \tfrac{1}{2}\|w- \cJ_\lambda^0(w)\|_{X_0}^2 
 \leq \liminf_{\eps \to 0^+} \lambda \cE_{\varepsilon}\left(\cJ_\lambda^{\eps}(w^{\eps})\right) + \liminf_{\eps \to 0^+}\tfrac{1}{2}\|L_{\eps}w^{\eps}- L_{\eps}\cJ_\lambda^{\eps}(w^{\eps})\|_{X_0}^2\\ \leq \liminf_{\eps \to 0^+} \lambda \cE_{\varepsilon}\left(\cJ_\lambda^{\eps}(w^{\eps})\right) + \liminf_{\eps \to 0^+}\tfrac{1}{2}\|w^{\eps}- \cJ_\lambda^{\eps}(w^{\eps})\|_{X_\eps}^2\\ \leq \limsup_{\eps \to 0^+} \left(\lambda \cE_{\varepsilon}\left(\cJ_\lambda^{\eps}(w^{\eps})\right) + \tfrac{1}{2}\|w^{\eps}- \cJ_\lambda^{\eps}(w^{\eps})\|_{X_\eps}^2\right)  \\ \leq \lim_{\eps\to 0^+}\left(\lambda \cE_\eps(v_*^\eps) + \tfrac{1}{2}\|w^\eps-v_*^\eps\|_{X_\eps}^2\right) = \lambda \cE_0(\cJ_\lambda^0(w)) + \tfrac{1}{2}\|w-\cJ_\lambda^0(w)\|_{X_0}^2 . 
\end{multline} 
We note that for any two sequences of real numbers $a_\eps$, $b_\eps$, 
\[ a+b \leq \liminf_{\eps \to 0^+} a_\eps + \liminf_{\eps \to 0^+} b_\eps, \quad   \limsup_{\eps \to 0^+} \left(a_\eps + b_\eps\right) \leq a+b\ \implies\ \lim_{\eps \to 0^+} a_\eps = a, \quad \lim_{\eps \to 0^+} b_\eps = b.\]
Thus, we deduce from \eqref{Mosco_est_2} that 
\[\lim_{\eps \to 0^+} \cE_{\varepsilon}\left(\cJ_\lambda^{\eps}(w^{\eps})\right) = \cE_0(\cJ_\lambda^0(w)) \quad \text{and} \quad \lim_{\eps \to 0^+} \|w^{\eps}- \cJ_\lambda^{\eps}(w^{\eps})\|_{X_\eps} = \|w-\cJ_\lambda^0(w)\|_{X_0}.\]
By Proposition \ref{L_conv_prop}(i), $w^\eps- \cJ_\lambda^\eps(w^\eps)$ converges strongly to $w- \cJ_\lambda^0(w)$ along $L_\eps$. Finally, by Proposition \ref{L_conv_prop}(iii), $\cJ_\lambda^\eps(w^\eps)$ converges strongly to $\cJ_\lambda^0(w)$ along $L_\eps$.
\end{proof} 

\begin{proof}[Proof of Theorem \ref{thm:Mosco}] 
Given $\lambda >0$, let us denote by $u^\lambda$ the trajectory of $S_0$ emanating from $\cJ_\lambda u_0$, and by $u^{\lambda, N}$, the minimizing movements approximation of $u^\lambda$. Similarly, for $\eps>0$, we denote by $u^{\eps, \lambda}$ the trajectory of $S_\eps$ emanating from $\cJ_\lambda^\eps u_0^\eps$, and by $u^{\eps, \lambda, N}$, its minimizing movements approximation. We then have for a fixed $t \in ]0,T]$ 
\begin{multline*}
\|L_\eps u^\eps - u\|_{X_0} \\ \leq \|L_\eps u^\eps - L_\eps u^{\eps,\lambda}\|_{X_0} + \|L_\eps u^{\eps, \lambda} - L_\eps u^{\eps,\lambda,N}\|_{X_0} + \|L_\eps u^{\eps, \lambda, N} - u^{\lambda,N}\|_{X_0} + \|u^{\lambda,N} - u^\lambda \|_{X_0} + \|u^\lambda - u \|_{X_0}  \\ \leq \|L_\eps\|_{B(X_\eps,X_0)} \left(
\|u^\eps - u^{\eps,\lambda}\|_{X_\eps} + \|u^{\eps, \lambda} - u^{\eps,\lambda,N}\|_{X_\eps}\right) \\ + \|L_\eps u^{\eps, \lambda, N} - u^{\lambda,N}\|_{X_0} + \|u^{\lambda,N} - u^\lambda \|_{X_0} + \|u^\lambda - u \|_{X_0} \\
\leq \|L_\eps\|_{B(X_\eps,X_0)} \left(
\|u^\eps_0 - \cJ_\lambda^\eps(u^\eps_0)\|_{X_\eps} + \sqrt{\cE_\eps(\cJ_\lambda^\eps(u^\eps_0))/N}\right) \\+ \|L_\eps u^{\eps, \lambda, N} - u^{\lambda,N}\|_{X_0} + \sqrt{\cE_0(\cJ_\lambda^0(u_0))/N} + \|\cJ_\lambda^0(u_0) - u_0 \|_{X_0}
\end{multline*} 
Let $\delta >0$. First we fix $\lambda>0$ small enough such that $\|\cJ_\lambda^0(u_0) - u_0 \|_{X_0} < \delta$. Then we choose $N \in \bN$ so that $\sqrt{\cE_0(\cJ_\lambda^0(u_0))/N} < \delta$. By Lemma \ref{lem:res}, we then have for sufficiently small $\eps>0$  
\[ \|u^\eps_0 - \cJ_\lambda^\eps(u^\eps_0)\|_{X_\eps} + \sqrt{\cE_\eps(\cJ_\lambda^\eps(u^\eps_0))/N} < 3\delta.\]
Iterated application of Lemma \ref{lem:res} yields also strong convergence along $L_\eps$ of $(\cJ_\frac{T}{N}^\eps)^k(\cJ_\lambda^\eps(u^\eps_0))$ to $(\cJ_\frac{T}{N}^0)^k(\cJ_\lambda^0(u_0))$ for $k = 0, \ldots,N-1$. Therefore in particular 
\[\|L_\eps u^{\eps, \lambda, N} - u^{\lambda,N}\|_{X_0} \leq \delta\]
for sufficiently small $\eps >0$, independently of $t$. We also recall that by \ref{H1}, $\|L_\eps\|_{B(X_\eps,X_0)} \leq 1+ \delta$ for sufficiently small $\eps >0$. Thus, we get $ L_\eps u^\eps \to u$ in $C([0,T],X_0)$. 

Next, we write 
\begin{multline*}
\left|\| u^\eps\|_{X_\eps} - \|u\|_{X_0}\right| \\ \leq \left|\|u^\eps\|_{X_\eps} - \|u^{\eps,\lambda}\|_{X_\eps}\right| + \left|\|u^{\eps, \lambda}\|_{X_\eps} - \|u^{\eps,\lambda,N}\|_{X_\eps}\right| + \left|\|u^{\eps, \lambda, N}\|_{X_\eps} - \|u^{\lambda,N}\|_{X_0}\right| \\+ \left|\|u^{\lambda,N}\|_{X_0} - \|u^\lambda \|_{X_0}\right| + \left|\|u^\lambda\|_{X_0} - \|u \|_{X_0}\right| \\ \leq 
\|u^\eps - u^{\eps,\lambda}\|_{X_\eps} + \|u^{\eps, \lambda} - u^{\eps,\lambda,N}\|_{X_\eps} + \left|\|u^{\eps, \lambda, N}\|_{X_\eps} - \|u^{\lambda,N}\|_{X_0}\right| + \|u^{\lambda,N} - u^\lambda \|_{X_0} + \|u^\lambda - u \|_{X_0} \\
\leq 
\|u^\eps_0 - \cJ_\lambda^\eps(u^\eps_0)\|_{X_\eps} + \sqrt{\cE_\eps(\cJ_\lambda^\eps(u^\eps_0))/N} + \left|\|u^{\eps, \lambda, N}\|_{X_\eps} - \|u^{\lambda,N}\|_{X_0}\right| \\+ \sqrt{\cE_0(\cJ_\lambda^0(u_0))/N} + \|\cJ_\lambda^0(u_0) - u_0 \|_{X_0}.
\end{multline*} 
We have already estimated all the terms on the r.\,h.\,s., except for the third one. Its asymptotic smallness follows again from the strong convergence of $(\cJ_\frac{T}{N}^\eps)^k(\cJ_\lambda^\eps(u^\eps_0))$ to $(\cJ_\frac{T}{N}^0)^k(\cJ_\lambda^0(u_0))$ along $L_\eps$ for $k = 0, \ldots,N-1$. Thus we obtain convergence $\| u^\eps\|_{X_\eps} \to \|u\|_{X_0}$ in $C([0,T])$ and conclude the proof.
\end{proof} 

As a step of the proof, we have proved:
\begin{corollary} \label{CRe}
    Suppose that $X_\eps$ Mosco-converge to $X_0$ along $L_\eps$, and $\cE_\eps$ Mosco-converge to $\cE$ along $L_\eps$. Let $S_\varepsilon$, $S_0$ be the gradient flow of $\mathcal{E}_\varepsilon$, $\mathcal{E}_0$ respectively.
    Assume that for a fixed $\lambda>0$, $\mathcal{J}_\lambda^\varepsilon(w_\varepsilon)$ strongly converges to  $\mathcal{J}_\lambda^0(w)$ along $L_\varepsilon$, and $\mathcal{E}_\varepsilon\left(\mathcal{J}_\lambda^\varepsilon(w_\varepsilon)\right)$ is bounded whenever $w_\varepsilon$ strongly converges to $w$ along $L_\varepsilon$ (as $\varepsilon\downarrow0$).
    Then $S_\varepsilon$ uniformly converges to $S_0$ along $L_\varepsilon$.
\end{corollary}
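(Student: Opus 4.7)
The plan is to re-run the proof of Theorem \ref{thm:Mosco} verbatim, tracking where each hypothesis is consumed and observing that the role of the functional Mosco convergence at the initial resolvent parameter $\lambda$ is exactly what hypothesis (C) supplies. I would fix $u_0 \in \overline{D(\cE_0)}$ together with a sequence $u_0^\eps$ converging strongly to $u_0$ along $L_\eps$, and introduce, as in the theorem's proof, the trajectories $u^{\eps,\lambda}$, $u^\lambda$ emanating from $\cJ_\lambda^\eps u_0^\eps$, $\cJ_\lambda^0 u_0$ respectively, together with their piecewise-linear minimizing-movements approximations $u^{\eps,\lambda,N}$, $u^{\lambda,N}$ on the grid of size $T/N$. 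The five-term triangle inequality displayed in the proof of Theorem \ref{thm:Mosco} then controls $\|L_\eps u^\eps(t) - u(t)\|_{X_0}$ for every $t \in [0,T]$.

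Bounding each of those five terms, I would appeal to (C) in the first two and to Mosco convergence of $\cE_\eps$ only in the middle one. Contractivity of $S_\eps$ gives $\|u^\eps - u^{\eps,\lambda}\|_{X_\eps} \le \|u_0^\eps - \cJ_\lambda^\eps u_0^\eps\|_{X_\eps}$, which by strong $L_\eps$-convergence $\cJ_\lambda^\eps u_0^\eps \to \cJ_\lambda^0 u_0$ from (C) and Proposition \ref{L_conv_prop}(iii) tends to $\|u_0 - \cJ_\lambda^0 u_0\|_{X_0}$, made small by choosing $\lambda$ small via the classical continuity of $\cJ_\lambda^0$ at $u_0$. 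The quantitative minimizing-movements estimate of \cite{NSV} bounds $\|u^{\eps,\lambda} - u^{\eps,\lambda,N}\|_{X_\eps}$ by $\sqrt{\cE_\eps(\cJ_\lambda^\eps u_0^\eps)/N}$, which is $O(1/\sqrt N)$ uniformly in $\eps$ thanks to the boundedness half of (C). The middle term $\|L_\eps u^{\eps,\lambda,N} - u^{\lambda,N}\|_{X_0}$ is handled by $N$-fold iteration of Lemma \ref{lem:res} at parameter $T/N$, initiated from the $L_\eps$-strongly convergent sequence $\cJ_\lambda^\eps u_0^\eps$ supplied by (C): this yields strong $L_\eps$-convergence of $(\cJ_{T/N}^\eps)^k(\cJ_\lambda^\eps u_0^\eps)$ for $k=0,\dots,N-1$, and Proposition \ref{L_conv_prop}(iii) then passes from the nodal values to the affine interpolant. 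The remaining two terms $\|u^{\lambda,N} - u^\lambda\|_{X_0}$ and $\|u^\lambda - u\|_{X_0}$ live entirely in $X_0$ and are controlled by the same minimizing-movements error estimate and by the continuity of $\cJ_\lambda^0$ at $u_0$.

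Fixing $\delta>0$ and then choosing, in this order, $\lambda$ small, $N$ large, and $\eps$ small, delivers $L_\eps u^\eps \to u$ in $C([0,T],X_0)$. The convergence of norms $\|u^\eps\|_{X_\eps} \to \|u\|_{X_0}$ in $C([0,T])$ follows from an entirely analogous five-term triangle inequality applied to $\bigl|\|u^\eps\|_{X_\eps} - \|u\|_{X_0}\bigr|$, together with the same convergences just listed. I do not anticipate any genuine obstacle, since the argument is a refactoring of the proof of Theorem \ref{thm:Mosco}; the point worth emphasizing is that the second half of (C) asks only for \emph{boundedness} of $\cE_\eps(\cJ_\lambda^\eps u_0^\eps)$, which is precisely what the minimizing-movements error $\sqrt{\cE_\eps(\cJ_\lambda^\eps u_0^\eps)/N}$ actually consumes, so no convergence of the energies at the initial resolvent is needed.
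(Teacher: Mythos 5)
Your proposal is correct and coincides with the paper's argument: the paper introduces Corollary \ref{CRe} with the phrase ``As a step of the proof, we have proved,'' i.e., its proof is nothing but a re-reading of the proof of Theorem \ref{thm:Mosco}, with Lemma \ref{lem:res} at the initial parameter $\lambda$ replaced by the direct resolvent hypothesis and the NSV error term consuming only \emph{boundedness} of $\cE_\eps(\cJ_\lambda^\eps u_0^\eps)$. Your accounting of where each hypothesis is used (contractivity, NSV estimate, iterated Lemma \ref{lem:res} at parameter $T/N$ via Mosco convergence of $\cE_\eps$, Proposition \ref{L_conv_prop}(iii) for the interpolant) and the order of choices $\lambda$, then $N$, then $\eps$ match the paper exactly.
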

In the case $X_\varepsilon=X_0$, $L_\varepsilon=\mathrm{id}$, the convergence of the resolvent $\mathcal{J}_\lambda^\varepsilon$ implies the convergence of trajectories.
This type of results is well established even when $X$ is a Banach space and the trajectory is not necessarily a gradient flow but a flow generated by a maximal monotone operator; see e.g.\ \cite{BP}. 
Instead of the minimizing movement approximation $u^N$, in \cite{BP} a simpler approximation
\[
    \bar{u}^N(t) = (\mathcal{J}_{t/N})^N (u_0)
\]
is used.
We argue that 
to estimate $\|L_\varepsilon u^\varepsilon-u\|_{X_0}$ we may use %by using 
$\bar{u}^{\lambda,N}$, $\bar{u}^{\varepsilon,\lambda,N}$ instead of $u^{\lambda,N}$, $u^{\varepsilon,\lambda,N}$.
The only difference is the bound of %estimate of
\[
    \| u^{\varepsilon,\lambda} - \bar{u}^{\varepsilon,\lambda,N} \|_{X_\varepsilon}
    \quad\text{and}\quad
    \| u^\lambda - \bar{u}^{\lambda,N}\|_{X_0}.
\]
In \cite{BP}, the estimate 
\[
    \left\| u - \bar{u}^N(t) \right\|_{X_0} \le 2tN^{-1/2}
    \left\| \partial^0\mathcal{E}_0(u_0) \right\|_{X_0}
\]
due to \cite[Theorem I]{CL} is invoked instead of 
\[
    \| u - \bar{u}^N(t) \|_{X_0}
    \le \left(\mathcal{E}_0(u_0)/N \right)^{1/2}.
\]
Thus, if $\|\partial^0\mathcal{E}_\varepsilon\left(\mathcal{J}_\lambda^\varepsilon(w_\varepsilon)\right)\|_{X_\eps}$ is bounded by $M_\lambda$ as $\varepsilon \to 0^+$ for a fixed $\lambda>0$ when $w_\varepsilon$ strongly converges to $w$ along $L_\varepsilon$, then we have
\[
    \| u^{\varepsilon,\lambda} - \bar{u}^{\varepsilon,\lambda,N} \|_{X_\varepsilon}
    \le 2tN^{-1/2} M_\lambda
\]
Here $\partial^0\mathcal{E}_\varepsilon$ denotes the minimal section of the subdifferential $\partial\mathcal{E}_\varepsilon$.
Since $\mathcal{J}_\lambda^0(u_0)\in D(\partial\mathcal{E}_0)$, we see that
\[
    \| u^\lambda - \bar{u}^{\lambda,N} \|_{X_0}
    \le 2tN^{-1/2}
    \left\| \partial^0\mathcal{E}_0(\mathcal{J}_\lambda^0(u_0)) \right\|_{X_0} 
    \le 2tN^{-1/2} M'_\lambda
\]
with some finite $M'_\lambda$.
We argue as before to get
\begin{multline*}
    \| L_\varepsilon u^\varepsilon -u \|_{X_0}
    \le \| L_\varepsilon \|_{B(X_\varepsilon,X_0)}
    \left( \left\| u_0^\varepsilon - \mathcal{J}_\lambda^\varepsilon(u_0^\varepsilon) \right\|_{X_\varepsilon}
    + \frac{M_\lambda T}{N^{1/2}} \right) \\
    + \| L_\varepsilon \bar{u}^{\varepsilon,\lambda,N} -\bar{u}^{\lambda,N}\|_{X_0}
    + \frac{M'_\lambda T}{N^{1/2}}
    + \left\| u_0 - \mathcal{J}_\lambda^0(u_0) \right\|_{X_0}.
\end{multline*}
The rest of the argument is the same.
We thus observe that the boundedness assumption of $\mathcal{E}_\varepsilon\left(\mathcal{J}_\lambda^\varepsilon(w_\varepsilon)\right)$ is replaced by a boundedness assumption of $\|\partial^0\mathcal{E}_\varepsilon\left(\mathcal{J}_\lambda^\varepsilon(w_\varepsilon)\right)\|_{X_\eps}$ as $\varepsilon \to 0^+$ for a fixed $\lambda>0$.

\bigskip

In the following sections we are going to present several applications of our abstract convergence theorem. In order to show flexibility of our tool, we will deal with the total variation flow, the flow of the $p$-heat eq., a flow on graphs and on thin domains.

% 原稿 2023/12/21 1/4
\section{Equations on a thin domain} \label{Sthin}

As a first example, we want to derive the limiting form of a parabolic equation with respect to a changing spatial domain. We have in mind  a slab of variable height, whose thickness tends to $0$. More precisely, let 
$\omega$ be a bounded Lipschitz domain in $\bR^{n-1}$, and let $\Omega_\varepsilon$ be defined by 
\[
    \Omega_\varepsilon = \left\{ (x', x_n) \bigm|
    x' \in \omega,\ 
    \varepsilon g_-(x') < x_n < \varepsilon g_+(x') \right\},
\]
where $g_\pm\in\operatorname{Lip}(\omega)$ are such that $g_-(x')<g_+(x')$ for all $x'\in\omega$.
Given $p>1$ (see the end of this section for the case $p=1$), we consider the $p$-heat equation with the Neumann boundary condition:
 \begin{align}
 \begin{aligned} \label{EpH}
     &u_t = \operatorname{div}\left( |\nabla u|^{p-2} \nabla u \right) &\quad \text{in} &\quad ]0,T[\times \Omega_\varepsilon, \\
     &\nabla u \cdot \nu^{\Omega_\varepsilon} = 0 &\text{on} &\quad ]0,T[ \times \partial\Omega_\varepsilon.
 \end{aligned}
 \end{align}
We take a Hilbert space $X_\varepsilon=L^2(\Omega_\varepsilon)$ equipped with the inner product
\[
    (u,v)_{X_\eps} := \frac{1}{\varepsilon} \int_{\Omega_\varepsilon} u\, v.
\]
Equation \eqref{EpH} can be written as the gradient flow $S_\varepsilon$ of the functional $\cE_\eps \colon X_\eps \to [0, \infty]$ given by
\begin{equation} \label{E_thin_p>1} 
    \mathcal{E}_\varepsilon(u) = \left \{
	\begin{array}{cl}
        \displaystyle \frac{1}{\varepsilon p} \int_{\Omega_\varepsilon} |\nabla u|^p &\text{if } u\in L^2(\Omega_\varepsilon)\cap W^{1,p}(\Omega_\varepsilon) \\
        \infty, &\text{if } u\in L^2(\Omega_\varepsilon) \backslash W^{1,p} (\Omega_\varepsilon).
	\end{array}
	\right.
\end{equation}
% 原稿 2023/12/21 2/4

We set $X_0=L^2(\omega)$, which is equipped with a weighted inner product
\[
    (v,w)_{X_0} = \int_{\omega} g\, v\, w 
    \quad v,w \in X_0,
\]
where $g(x')=g_+(x')-g_-(x')$. We shall show that $S_\eps$ converges to the gradient flow $S_0$ of the functional $\cE_0 \colon X_0 \to [0,\infty]$ given by
\begin{equation} \label{E0_thin} 
    \mathcal{E}_0(v) = \left \{
	\begin{array}{cl}
        \displaystyle \frac{1}{p} \int_{\omega} g\, |\nabla' v|^p &\text{if } v\in L^2(\omega)\cap W^{1,p}(\omega), \\
        \infty &\text{if } v\in L^2(\omega) \backslash W^{1,p}(\omega). 
	\end{array}
	\right.
\end{equation}
Here $\nabla'v=(\partial v/\partial x_1,\ldots,\partial v/\partial x_{n-1})$ . We observe that $S_0$ corresponds to the equation
 \begin{align}
 \begin{aligned} 
     &gv_t = \operatorname{div}' \left( g|\nabla' v|^{p-2} \nabla' v \right) &\quad \text{in} &\quad ]0,T[ \times \omega_\varepsilon, \\
     &\quad \nu^\omega \cdot \nabla' v = 0 &\text{on} &\quad ]0,T[ \times \partial\omega_\varepsilon,
 \end{aligned}
 \end{align}
%where $g(x')=g_+(x')-g_-(x')$, $\nabla'v=(\partial v/\partial x_1,\ldots,\partial v/\partial x_{n-1})$ and 
where $\operatorname{div}'$ denotes the divergence on $\omega$.

We define connecting operators $L_\varepsilon$ by
\[
    L_\varepsilon u(x')
    = \frac{1}{\varepsilon g(x')} \int_{\varepsilon g_-(x')}^{\varepsilon g_+(x')} u(x',x_n)\, \dd x_n,
\]
i.e., $L_\varepsilon$ is a vertically averaging operator.
%Applying our abstract theory, we obtain the following convergence result.
\begin{lemma} \label{lem:Xthin}
$X_\eps$ Mosco-converge to $X_0$ along $L_\eps$.  
\end{lemma}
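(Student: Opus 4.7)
The plan is to verify the two conditions \ref{H1} and \ref{H2} of Definition \ref{def:connect} directly, exploiting the fact that the weighted inner products on $X_\eps$ and $X_0$ are tailored precisely so that vertical averaging is contractive.

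For \ref{H1}, I would fix $u \in X_\eps$ and apply the Cauchy--Schwarz inequality to the definition of $L_\eps u$ to obtain the pointwise bound
\[
(L_\eps u(x'))^2 \le \frac{1}{\eps g(x')} \int_{\eps g_-(x')}^{\eps g_+(x')} u(x',x_n)^2 \, \dd x_n
\]
for a.e.\ $x' \in \omega$. Multiplying by $g(x')$, integrating over $\omega$, and applying Fubini's theorem collapses the weight and yields exactly $\|L_\eps u\|_{X_0}^2 \le \tfrac{1}{\eps}\int_{\Omega_\eps} u^2 = \|u\|_{X_\eps}^2$. Thus $\|L_\eps\|_{B(X_\eps,X_0)} \le 1$ uniformly in $\eps$, which is stronger than \ref{H1}.

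For \ref{H2}, the natural recovery sequence is vertical extension by constants: given $w \in X_0$, set $w^\eps(x', x_n) := w(x')$ for $(x',x_n) \in \Omega_\eps$. Since $w \in L^2(\omega)$ and $g \in L^\infty(\omega)$, one checks by Fubini that $w^\eps \in L^2(\Omega_\eps)$ with $\|w^\eps\|_{X_\eps}^2 = \tfrac{1}{\eps}\int_\omega \eps g(x') w(x')^2 \, \dd x' = \|w\|_{X_0}^2$. By construction, the vertical average satisfies $L_\eps w^\eps(x') = w(x')$ identically on $\omega$, so $L_\eps w^\eps \to w$ strongly (hence weakly) in $X_0$. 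In particular \ref{H2} holds; and the same computation shows that $w^\eps$ even converges strongly to $w$ along $L_\eps$, which will be useful for verifying \ref{H4} later.

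There is no real obstacle here: the whole point of the weighted inner product and the definition of $L_\eps$ as a weighted average is to make this Mosco convergence of spaces an essentially tautological step. The nontrivial work will come later, when verifying \ref{H3}--\ref{H4} for the energies $\cE_\eps$, where one needs to control the vertical derivative $\partial_{x_n} u^\eps$ and justify that minimizers asymptotically become $x_n$-independent.
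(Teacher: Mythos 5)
Your proof is correct and takes essentially the same approach as the paper: the paper also establishes \ref{H1} via Jensen's inequality (applying it after a change of variables to $[0,1]$, rather than directly as Cauchy--Schwarz pointwise, which is cosmetically different but logically identical) and verifies \ref{H2} with the same constant vertical extension $w^\eps(x',x_n)=w(x')$.
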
 

\begin{proof} 
We first check that $L_\eps$ are asymptotically contractive. By changing the variable of integration, we see that
\begin{align} \label{thin_change_var}
    (L_\varepsilon f)(x')
    &= \frac{1}{\varepsilon g(x')} \int_{\varepsilon g_-(x')}^{\varepsilon g_+(x')} f(x',x_n)\, \dd x_n\nonumber \\
    &= \frac{1}{\varepsilon g(x')} \int_0^{\varepsilon g(x')} f\left(x',y_n+\varepsilon g_-(x')\right)\, \dd y_n \\
    &= \int_0^1 f\left(x',z_n \varepsilon g(x') + \varepsilon g_-(x') \right)\, \dd z_n.\nonumber
\end{align}
By Jensen's inequality, 
\[
    \|L_\varepsilon f\|_{X_0}^2
    \leq \int_0^1 \left\{ \int_{\omega} \left| f\left(x', z_n\varepsilon g(x') + \varepsilon g_-(x') \right)\right|^2 g(x')\, \dd x' \right\} \dd z_n.
\]
By changing the variable $x_n=z_n\varepsilon g(x')+\varepsilon g_-(x')$, the right-hand side equals
\[
    \frac{1}{\varepsilon} \int_{\omega} \int_{\varepsilon g_-(x')}^{\varepsilon g_+(x')} \left|f(x',x_n)\right|^2 \dd x'\dd x_n.
\]
We thus conclude that
\[
    \| L_\varepsilon f \|_{X_0}^2 \leq \|f\|_{X_\varepsilon}^2,
\]
which yields \ref{H1}.

In order to show \ref{H2}, for a given $w \in X_0$ we take $w^\eps \in X_\eps$ given by 
\[ w^\eps(x', x_n) = w(x') \quad \text{ for } (x', x_n) \in \Omega_\eps.\]
Then $L_\eps w^\eps = w$ is a constant sequence, which clearly converges to $w$ in $X_0$.  
\end{proof} 

\begin{lemma} \label{lem:Ethin}
$\cE_\eps$ Mosco-converge to $\cE_0$ along $L_\eps$.  
\end{lemma}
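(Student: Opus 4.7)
My plan is to establish the two conditions \ref{H3} and \ref{H4} separately. Condition \ref{H4} is the easier one: given $w \in D(\cE_0) = W^{1,p}(\omega) \cap L^2(\omega)$, I define the recovery sequence by constant extension in the vertical direction, $w^\eps(x',x_n) := w(x')$ for $(x',x_n) \in \Omega_\eps$. A direct computation using Fubini, analogous to the change of variables appearing in the proof of Lemma \ref{lem:Xthin}, gives $L_\eps w^\eps = w$, $\|w^\eps\|_{X_\eps}^2 = \int_\omega g w^2\,\dd x' = \|w\|_{X_0}^2$, and $\cE_\eps(w^\eps) = \tfrac{1}{p}\int_\omega g|\nabla' w|^p\,\dd x' = \cE_0(w)$ for every $\eps>0$, so all three convergences in \ref{H4} hold trivially.

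For condition \ref{H3}, I take an asymptotically bounded sequence $(w^\eps)$ with $L_\eps w^\eps \rightharpoonup w$ in $X_0$ and, passing to a subsequence, assume that $\lim_\eps \cE_\eps(w^\eps)$ exists and is finite (otherwise the inequality is automatic). The central estimate I need is
\[ \int_\omega g(x')\,|\nabla'(L_\eps w^\eps)(x')|^p\,\dd x' \leq (1+o_\eps(1))\, p\, \cE_\eps(w^\eps), \]
which transfers control of the full $p$-energy on the thin slab to an $(n-1)$-dimensional energy of the vertical average. To derive it I would flatten the slab via the parametrization $\Phi_\eps(x',z_n) := (x',\eps g_-(x')+\eps g(x') z_n)$ from $\omega\times\,]0,1[$ onto $\Omega_\eps$, set $\tilde w^\eps := w^\eps \circ \Phi_\eps$, and use $L_\eps w^\eps(x') = \int_0^1 \tilde w^\eps(x',z_n)\,\dd z_n$. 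The chain rule then gives
\[ \nabla'_{x'} \tilde w^\eps = (\nabla'_{x'} w^\eps)\circ\Phi_\eps + \eps\bigl(\nabla' g_-(x') + z_n \nabla' g(x')\bigr)\,(\partial_{x_n} w^\eps)\circ\Phi_\eps, \]
and the desired estimate follows by combining Jensen's inequality $\bigl|\int_0^1 \nabla'_{x'}\tilde w^\eps\,\dd z_n\bigr|^p \leq \int_0^1 |\nabla'_{x'}\tilde w^\eps|^p\,\dd z_n$, the Young-type bound $(a+b)^p \leq (1+\delta) a^p + C_\delta b^p$, and the change of variables $x_n = \eps g_-(x') + \eps g(x') z_n$ (with Jacobian $\eps g(x')$). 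The residual $O(\delta)+O(\eps^p)$ is killed by sending $\eps\to 0^+$ first and then $\delta\to 0^+$.

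Combined with Poincar\'e's inequality and the bound $\|L_\eps w^\eps\|_{X_0}\leq \|w^\eps\|_{X_\eps}$ established in the proof of Lemma \ref{lem:Xthin}, this estimate yields a uniform $W^{1,p}(\omega)$ bound on $L_\eps w^\eps$ (assuming $g$ stays bounded away from zero on $\omega$, as is standard in this setting). I then extract a subsequence converging weakly in $W^{1,p}(\omega)$ and invoke uniqueness of the weak $L^2(\omega)$ limit to identify the weak $W^{1,p}$ limit with $w$, giving $w \in W^{1,p}(\omega)\cap L^2(\omega) = D(\cE_0)$. The proof closes by weak lower semi-continuity of the convex functional $v\mapsto \tfrac{1}{p}\int_\omega g|\nabla' v|^p$ on $W^{1,p}(\omega)$:
\[ \cE_0(w) \leq \liminf_\eps \tfrac{1}{p}\int_\omega g|\nabla'(L_\eps w^\eps)|^p\,\dd x' \leq \liminf_\eps \cE_\eps(w^\eps). \]

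The main technical nuisance I anticipate is the careful bookkeeping of the $\eps$-small tangential corrections coming from the Lipschitz derivatives of $g_\pm$ through $\partial_{x_n} w^\eps$: they must be separated from the genuine tangential gradient by a Young-type splitting accurate enough to preserve the constant $1$ in front of $\cE_\eps(w^\eps)$ in the limit, rather than leaving a multiplicative constant $>1$ that would spoil the sharp lower bound.
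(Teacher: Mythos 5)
Your proposal is essentially the same argument as the paper's, and it is correct. Both use constant vertical extension for the recovery sequence \ref{H4}, and for \ref{H3} both derive the central inequality $\cE_0(L_\eps w^\eps)\leq(1+\vartheta+C(\vartheta)\eps^p\cdots)\cE_\eps(w^\eps)$ by flattening $\Omega_\eps$ via the bi-Lipschitz map, applying the chain rule (the paper justifies this step by \cite[Theorem 2.2.2]{ziemer}), then Jensen and a Young-type splitting.

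The one place your argument takes a genuinely different and slightly less economical route is the final step of \ref{H3}. You establish a uniform $W^{1,p}(\omega)$ bound on $L_\eps w^\eps$, extract a weakly $W^{1,p}$-convergent subsequence, identify its limit with $w$, and then use weak lower semicontinuity on $W^{1,p}(\omega)$. This requires a Poincar\'e inequality and, as you yourself flag, the extra hypothesis that $g$ is bounded away from zero (to convert the $g$-weighted $L^p$ bound on $\nabla' L_\eps w^\eps$ into an honest $W^{1,p}(\omega)$ bound). The paper avoids this detour entirely: since $\cE_0$ is convex and lower semicontinuous on $X_0 = L^2(\omega)$ (with the weighted inner product), it is weakly lower semicontinuous there, so the inequality $\liminf_\eps \cE_0(L_\eps w^\eps)\geq\cE_0(w)$ follows immediately from $L_\eps w^\eps\rightharpoonup w$ in $X_0$ with no intermediate Sobolev compactness and no positivity lower bound on $g$. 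You should replace your $W^{1,p}$-compactness step by this direct application of weak lower semicontinuity of $\cE_0$ on $X_0$, which drops the unnecessary assumption and shortens the proof.
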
 

\begin{proof}
%We next prove that $\mathcal{E}_\varepsilon$ Mosco-converges to $\mathcal{E}_0$ along $L_\varepsilon$. First, 
Let $(w^\eps)$ be such that $L_\varepsilon w^\varepsilon \rightharpoonup w$ in $X_0$ and $\sup_{0<\varepsilon<1}\|w^\varepsilon\|_{X_\varepsilon}<\infty$.  Suppose that $w^\eps\in W^{1,p}(\Omega_\eps)$ for a fixed $\eps > 0$. Since the mapping $G_\eps\colon \omega\times]0,1[ \to \Omega_\varepsilon$ given by  $G_\eps(x', z_n) = (x', z_n \eps g(x') + \eps g_-(x'))$ is bi-Lipschitz, by \cite[Theorem 2.2.2] {ziemer} the composition $w^\eps \circ G_\eps$ belongs to $W^{1,p}(\omega\times]0,1[)$ and its derivative is given by the usual chain rule.
Recalling \eqref{thin_change_var}, we calculate 
\begin{multline} \label{thin_grad_calc}
    \nabla' L_\eps w^\eps (x')\\ = \int_0^1 (\nabla' w^\eps) \left(x',z_n \varepsilon g(x') + \varepsilon g_-(x') \right) + \frac{\partial w^\eps}{\partial x_n} \left(x',z_n \varepsilon g(x') + \varepsilon g_-(x') \right) \left(\eps z_n \nabla' g(x') + \eps \nabla' g_-(x')\right) \dd z_n \\ 
    = \frac{1}{\varepsilon g(x')} \int_{\varepsilon g_-(x')}^{\varepsilon g_+(x')} (\nabla' w^\eps) \left(x',x_n\right) + \frac{\partial w^\eps}{\partial x_n} \left(x',x_n\right) \left(\frac{x_n - \eps g_-(x')}{g(x')} \nabla' g(x') + \eps \nabla ' g_-(x')\right) \dd x_n.
\end{multline} 
We note that 
\begin{multline*} 
\left|\frac{x_n - \eps g_-(x')}{g(x')} \nabla' g(x') + \eps \nabla ' g_-(x')\right| = \left|\frac{x_n - \eps g_-(x')}{g(x')} \nabla' g_+(x') + \frac{\eps g_+(x') - x_n}{g(x')} \nabla ' g_-(x')\right| \\ \leq \eps |\nabla ' g_+(x')| + \eps |\nabla ' g_-(x')|
\end{multline*}
Thus, by Jensen's and Young's inequalities, for any $\vartheta >0$, 
\begin{multline*} 
 \left|\nabla' L_\eps w^\eps (x')\right|^p \\ \leq 
     \frac{1}{\varepsilon g(x')} \int_{\varepsilon g_-(x')}^{\varepsilon g_+(x')} \left|\nabla' w^\eps (x',x_n) + \frac{\partial w^\eps}{\partial x_n} (x',x_n) \left(\frac{x_n - \eps g_-(x')}{g(x')} \nabla' g(x') + \eps \nabla ' g_-(x')\right)\right|^p  \dd x_n  \\
     \leq 
     \frac{1}{\varepsilon g(x')}\! \int_{ \raisebox{-3pt}{\scriptsize$\varepsilon g_-(x')$}}^{ \raisebox{3pt}{\scriptsize$\varepsilon g_+(x')$}} \hspace{-22pt}(1 + \vartheta)\left|\nabla' w^\eps (x',x_n)\right|^p + C(\vartheta)\left|\frac{\partial w^\eps}{\partial x_n} (x',x_n) \left(\frac{x_n - \eps g_-(x')}{g(x')} \nabla' g(x') + \eps \nabla ' g_-(x')\right)\right|^p  \!\dd x_n \\
     \leq \frac{1}{\varepsilon g(x')} \int_{\varepsilon g_-(x')}^{\varepsilon g_+(x')} (1 + \vartheta)\left|\nabla' w^\eps (x',x_n)\right|^p + C(\vartheta)\eps^p \left( |\nabla ' g_+(x')| +  |\nabla ' g_-(x')|\right)^p \left|\frac{\partial w^\eps}{\partial x_n} (x',x_n)\right|^p \dd x_n \\ 
     \leq \frac{1}{\varepsilon g(x')} \int_{\varepsilon g_-(x')}^{\varepsilon g_+(x')}\left(1 + \vartheta + C(\vartheta)\eps^p \left( |\nabla ' g_+(x')| +  |\nabla ' g_-(x')|\right)^p\right) \left|\nabla w^\eps (x',x_n)\right|^p \dd x_n. 
\end{multline*} 
Multiplying the obtained inequality by $g(x')$ and integrating it over $\omega$, we get 
\begin{multline*}p\mathcal{E}_0(L_\varepsilon w^\varepsilon) = \int_{\omega} g(x') \left|\nabla' L_\eps w^\eps (x')\right|^p \dd x'\\ \leq \left(1 + \vartheta + C(\vartheta)\eps^p \left( \|\nabla ' g_+\|_{L^\infty(\omega)^n} +  \|\nabla ' g_-\|_{L^\infty(\omega)^n}\right)^p\right) \frac{1}{\varepsilon} \int_{\omega}  \int_{\varepsilon g_-(x')}^{\varepsilon g_+(x')} \left|\nabla w^\eps \left(x',x_n\right)\right|^p \dd x_n \dd x', 
\end{multline*}
i.e.,
\begin{equation} \label{thin_en_ineq} 
  \mathcal{E}_0(L_\varepsilon w^\varepsilon) \leq \left(1 + \vartheta + C(\vartheta)\eps^p \left( \|\nabla ' g_+\|_{L^\infty(\omega)^n} +  \|\nabla ' g_-\|_{L^\infty(\omega)^n}\right)^p\right) \cE_\eps(w^\eps)
\end{equation} 
for any $w^\eps \in W^{1,p}(\Omega_\eps)$. Inequality \eqref{thin_en_ineq} also holds trivially for $w^\eps \in X_\eps \setminus W^{1,p}(\Omega_\varepsilon)$, because the r.h.s.\ is infinite. Then, by lower semicontinuity of $\cE_0$,  
\[  (1+ \vartheta) \liminf_{\eps \to 0^+} \cE_\eps(w^\eps) \geq \liminf_{\eps \to 0^+} \cE_0(L_\eps w
^\eps) \geq \cE_0(w).\]
Since $\vartheta$ is arbitrary, we conclude \ref{H3}. 

 The property \ref{H4} is easy to prove.
 For a given $w\in X_0$, we take $w^\varepsilon=w(x')$, a constant extension along the vertical direction $x_n$.
 By definition, we see $L_\varepsilon w^\varepsilon=w$, $\|w^\varepsilon\|_{X_\varepsilon}=\|w\|_{X_0}$, and  $\mathcal{E}_\varepsilon(w^\varepsilon)=\mathcal{E}_0(w)$, so \ref{H4} follows. 
\end{proof}

The desired convergence now follows from the abstract convergence result Theorem \ref{thm:Mosco}.
\begin{theorem} \label{Tthin}
The gradient flow $S_\varepsilon$ uniformly converges to $S_0$ along $L_\varepsilon$. \qed
\end{theorem}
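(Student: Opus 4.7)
The plan is to invoke the abstract convergence result, Theorem \ref{thm:Mosco}, directly. To do this, I need to check the two premises of that theorem: that $X_\varepsilon$ Mosco-converge to $X_0$ along $L_\varepsilon$ (i.e.\ the connecting operators satisfy \ref{H1}--\ref{H2}), and that the functionals $\cE_\varepsilon$ defined in \eqref{E_thin_p>1} Mosco-converge to $\cE_0$ from \eqref{E0_thin} along the same family $L_\varepsilon$ (i.e.\ \ref{H3}--\ref{H4} hold). Both of these are exactly the content of the two preceding lemmas, so essentially all the work has already been done upstream.

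Concretely, Lemma \ref{lem:Xthin} provides \ref{H1} via the change-of-variables and Jensen argument, and \ref{H2} via the trivial choice of constant-in-$x_n$ recovery sequence. Lemma \ref{lem:Ethin} provides the liminf inequality \ref{H3} through the explicit chain-rule bound on $\nabla' L_\varepsilon w^\varepsilon$ together with Young's inequality and the arbitrariness of the Young parameter $\vartheta$, and \ref{H4} again with the constant vertical extension as the recovery sequence (which automatically gives simultaneous convergence of norms and energies, not just of the $L_\varepsilon$-images).

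Thus the proof will consist of a single sentence: the hypotheses \ref{H1}--\ref{H4} of Theorem \ref{thm:Mosco} are verified in Lemmas \ref{lem:Xthin} and \ref{lem:Ethin}, hence the conclusion of Theorem \ref{thm:Mosco}, namely uniform convergence of $S_\varepsilon$ to $S_0$ along $L_\varepsilon$ in the sense of Definition \ref{con-flo}, holds. There is no obstacle here at all: the entire substance of the argument was packaged into proving Lemma \ref{lem:Ethin}, where the only mildly delicate step was controlling the $x_n$-derivative term arising from the non-flat graphs $g_\pm$ through the factor $\varepsilon^p$ that it carries, so that it vanishes in the limit after fixing $\vartheta$ and sending $\varepsilon \to 0^+$.
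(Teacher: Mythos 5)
Your proposal is correct and matches the paper exactly: the theorem is presented as an immediate corollary of Theorem \ref{thm:Mosco}, with the hypotheses \ref{H1}--\ref{H2} supplied by Lemma \ref{lem:Xthin} and \ref{H3}--\ref{H4} by Lemma \ref{lem:Ethin}. Nothing further is needed.
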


If $p=1$, then \eqref{E_thin_p>1} does not define a lower semicontinuous functional on $X_\eps$. In this case, we take instead  
\begin{equation*}
    \mathcal{E}_\varepsilon(u) = \left \{
	\begin{array}{cl}
        \displaystyle \frac{1}{\varepsilon} |D u|(\Omega_\eps), & u\in L^2(\Omega_\varepsilon)\cap BV(\Omega_\varepsilon), \\
        \infty, & u\in L^2(\Omega_\varepsilon) \backslash BV (\Omega_\varepsilon).  
	\end{array}
	\right.
\end{equation*}
The gradient flow (which we still denote by $S_\eps$) of this functional is known as the total variation flow. Its description in terms of a partial differential equation can be found in \cite{ACM}. Similarly, instead of \eqref{E0_thin}, we define $\mathcal{E}_0$ by  
\begin{equation*}
    \mathcal{E}_0(v) = \left \{
	\begin{array}{cl}
        \displaystyle \int_{\omega} g(x') \; d|D' v|, & v\in L^2(\omega)\cap BV(\omega), \\
        \infty, & v\in L^2(\omega) \backslash BV (\omega), 
	\end{array}
	\right.
\end{equation*}
where $D'v$ is the derivative of $v \in BV(\omega)$. Then, $\cE_0$ is convex, lower semicontinuous and defines a gradient flow on $X_0$. Its PDE description can be found in \cite{Moll}. 

The statements of Lemma \ref{lem:Ethin} and Theorem \ref{Tthin} hold also in the case $p=1$. The only difference in the proof is that it is not enough to check \eqref{thin_en_ineq} for $w^\eps \in W^{1,1}(\Omega_\varepsilon)$, as the domain of $\cE_\eps$ is a larger space $BV(\Omega_\varepsilon)$. For $w^\eps \in BV(\Omega_\varepsilon)$, the derivative of inner composition with a bi-Lipschitz function in general is not given by the simple chain rule, and validity of the computations above is not clear. To circumvent this difficulty, we approximate $w^\varepsilon$ by a sequence $(w^\eps_j)$ of $C^1$ functions such that $w_j^\varepsilon\to w^\varepsilon$ in $L^2(\Omega_\varepsilon)$ and $\mathcal{E}_\varepsilon(w_j^\varepsilon)\to\mathcal{E}_\varepsilon(w^\varepsilon)$ as $j\to\infty$.
 Such a sequence always exists as shown in \cite{giusti}.
 With $w^\eps_j$ in place of $w^\eps$, the computations \eqref{thin_grad_calc}-\eqref{thin_en_ineq} are justified and we arrive at
\begin{equation}\label{s6-r1}
    \mathcal{E}_0(L_\varepsilon w_j^\varepsilon)
    \leq \left( 1+\vartheta+C(\vartheta)\varepsilon
    \left(\lVert \nabla' g_+\rVert_{L^\infty(\omega)^n} 
    + \lVert \nabla' g_-\rVert_{L^\infty(\omega)^n} \right)\right)
    \mathcal{E}_\varepsilon(w_j^\varepsilon).
\end{equation}
Since $L_\varepsilon w_j^\varepsilon\to L_\varepsilon w^\varepsilon$ in $L^2(\omega)$, by lower semicontinuity of $\cE_0$ we recover \eqref{thin_en_ineq} (with $p=1$) for any $w^\eps \in BV(\Omega_\varepsilon)$, and therefore for any $w^\eps \in X_\eps$. From this point on, the proof concludes as before. 

\section{The $p$-heat equation with dynamic boundary condition} \label{sec5}

\subsection{The dynamic boundary condition as a limit of boundary layer problems}\label{sec5.1}

For a given $p>1$,\footnote{The case $p=1$ is now significantly different and will be considered in Section \ref{S-TV}.} let us now consider initial value problem for the $p$-heat equation with dynamic boundary condition on a bounded  domain $\Omega$ with $C^2$ boundary
\begin{align}\label{r-dbc-plap}
&u_t  = \dv (|\nabla u|^{p-2}\nabla u) &\text{in } ]0,\infty[\times \Omega,\nonumber\\
&  u_t+  (|\nabla u|^{p-2}\nabla u) \cdot\nu =0 & \text{in } ]0, \infty[ \times \partial\Omega,\\
& u(0,\cdot) = u_0. & \nonumber
\end{align}
Our aim in this section is to show that \eqref{r-dbc-plap} arises as a limit of $p$-heat equations with boundary layer
\begin{align}\label{r-bry-plap}
&b_\eps u_t  = \dv (|\nabla u|^{p-2}\nabla u) &\text{in } ]0,\infty[\times \Omega,\nonumber\\
&  (|\nabla u|^{p-2}\nabla u) \cdot\nu =0 &\text{in } ]0,\infty[\times \partial\Omega,\\
& u(0,\cdot) = u_0, & \nonumber
\end{align}
where 
\[ b_{\eps}(x) = 1 + \eps^{-1} \mathbf 1_{\Omega \setminus \Omega^\eps}(x), \quad \Omega^\eps = \left\{ x \in \Omega \colon \dist(x, \partial \Omega) > \eps\right\}.\]

We shall first see that problems \eqref{r-bry-plap} and \eqref{r-dbc-plap} admit well-posed weak formulations as gradient flows of convex functionals on Hilbert spaces. Let $\cE_0$ be the functional defined on $X_0 :=L^2(\Omega) \times L^2(\partial\Omega)$ by
\begin{equation} \label{pheat_e0}
\cE_0(u, v)=
\left\{
\begin{array}{ll}
 \frac 1p\int_\Omega |\nabla u|^p & \text{if }u\in  W^{1,p}(\Omega),\ \gamma u = v,\\
 +\infty & \text{otherwise,}
\end{array}
\right.
\end{equation} 
where $\gamma \colon  W^{1,p}(\Omega)\to  L^{p}(\partial\Omega)$ is the trace operator. %We have $D(\cE_0) =  W^{1,p}(\Omega)\times \gamma  W^{1,p}(\Omega)$. 
Next, let $X_{\eps}$ be $L^2(\Omega)$ with the scalar product 
\[(v,w)_{X_\eps} = \int_{\Omega} b_{\eps} v w.\] 
We define $\cE_\eps\colon X_\eps\to [0, \infty]$
by
\begin{equation} \label{pheat_ee}
\cE_\eps(u) = \left\{ 
\begin{array}{ll}
  \int_\Omega \frac 1p |\nabla u|^p   & u\in  W^{1,p}(\Omega),\\
 + \infty   & L^2(\Omega) \setminus W^{1,p}(\Omega),
\end{array}
\right. 
\end{equation}

\begin{proposition}\label{r-fu} Functional $\cE_\eps$ is convex and lower semicontinuous on $X_\eps$ for all $\eps \geq 0$. Moreover, $D(\cE_\eps)$ is dense in $X_\eps$ for $\eps \geq 0$.
\end{proposition}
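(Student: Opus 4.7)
The plan splits naturally into the case $\eps > 0$ (where $X_\eps$ is just $L^2(\Omega)$ in disguise) and the genuinely new case $\eps = 0$ (a product space with a trace constraint coupling the two factors).

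For $\eps > 0$, I would first note that $1 \le b_\eps \le 1 + \eps^{-1}$ on $\Omega$, so the $X_\eps$-norm is equivalent to the standard $L^2(\Omega)$ norm. Hence the topological statements (lower semicontinuity, density) reduce to the same statements in $L^2(\Omega)$. Convexity of $\cE_\eps$ follows from convexity of $\xi \mapsto |\xi|^p$. For lower semicontinuity, given $u_n \to u$ in $L^2(\Omega)$ with $C := \liminf \cE_\eps(u_n) < \infty$, I would pass to a subsequence along which $\cE_\eps(u_n) \to C$; then $(u_n)$ is bounded in $W^{1,p}(\Omega)$, which is reflexive since $p > 1$, so up to a further subsequence $u_n \rightharpoonup u$ in $W^{1,p}(\Omega)$, and the claim follows from weak lower semicontinuity of $w \mapsto \int_\Omega |\nabla w|^p$. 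Density is immediate since $C^\infty_c(\Omega) \subset D(\cE_\eps)$ is dense in $L^2(\Omega)$.

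For $\eps = 0$, convexity of $\cE_0$ follows from linearity of the trace operator $\gamma$ combined with convexity of the $p$-Dirichlet integral: if $(u_i, v_i) \in D(\cE_0)$ with $\gamma u_i = v_i$ for $i=1,2$, then $\gamma(t u_1 + (1-t) u_2) = t v_1 + (1-t) v_2$. For lower semicontinuity, if $(u_n, v_n) \to (u,v)$ in $X_0$ with $\liminf \cE_0(u_n, v_n) < \infty$, I would extract a subsequence along which $(u_n)$ is bounded in $W^{1,p}(\Omega)$ and $\gamma u_n = v_n$. Reflexivity yields $u_n \rightharpoonup u$ in $W^{1,p}(\Omega)$, and the compactness of the trace $W^{1,p}(\Omega) \hookrightarrow L^p(\partial \Omega)$ (valid since $\partial \Omega$ is $C^2$) gives $\gamma u_n \to \gamma u$ in $L^p(\partial \Omega)$, in particular weakly in $L^2(\partial \Omega)$. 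Combined with $\gamma u_n = v_n \to v$ in $L^2(\partial \Omega)$, this forces $\gamma u = v$; weak lower semicontinuity of the $p$-Dirichlet energy closes the argument.

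The main obstacle is density of $D(\cE_0)$ in $X_0 = L^2(\Omega) \times L^2(\partial \Omega)$, because the constraint $\gamma u = v$ couples the two factors. Given $(f, g) \in X_0$ and $\eta > 0$, my construction would be additive: pick $f_n \in C^\infty_c(\Omega)$ with $f_n \to f$ in $L^2(\Omega)$ and $\gamma f_n = 0$; pick $g_m \in C^\infty(\partial\Omega)$ with $g_m \to g$ in $L^2(\partial\Omega)$, then extend $g_m$ to $G_m \in C^\infty(\overline\Omega)$ using a tubular-neighborhood parametrization of $\partial\Omega$ (available since $\partial\Omega \in C^2$); finally localize near the boundary by $\varphi_\delta \in C^\infty(\overline\Omega)$ with $\varphi_\delta|_{\partial\Omega} = 1$ and $\supp \varphi_\delta \subset \{\dist(\cdot, \partial\Omega) \le \delta\}$. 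The candidate is
\[ u_{n,m,\delta} := f_n + \varphi_\delta G_m \in W^{1,p}(\Omega), \qquad \gamma u_{n,m,\delta} = g_m. \]
Since $G_m$ is bounded, $\|\varphi_\delta G_m\|_{L^2(\Omega)} \to 0$ as $\delta \to 0$ by dominated convergence; since moreover $f_n$ has compact support inside $\Omega$, for $\delta$ small the two summands have disjoint supports, so the $L^2$-distance to $f$ is controlled by $\|f_n - f\|_{L^2(\Omega)} + \|\varphi_\delta G_m\|_{L^2(\Omega)}$. Choosing first $n$ large, then $m$ large, then $\delta$ small (depending on $m$ to handle the $L^2(\partial\Omega)$ error $\|g_m - g\|_{L^2(\partial\Omega)}$) yields $D(\cE_0)$-elements within $\eta$ of $(f,g)$ in $X_0$.
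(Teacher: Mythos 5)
Your proof is correct and follows the same overall strategy as the paper (handling $\eps>0$ by norm equivalence with $L^2(\Omega)$, then for $\eps=0$ using weak $W^{1,p}$-compactness plus weak lower semicontinuity and the behaviour of the trace under weak convergence). The one place where your route genuinely differs is the density of $D(\cE_0)$ in $X_0 = L^2(\Omega)\times L^2(\partial\Omega)$. The paper's construction is interpolative: for $(u,v)\in C^1(\overline\Omega)\times C^1(\partial\Omega)$, one sets $v_k = v$ and
\[
u_k(x) = u(x) + \bigl(1 - k\,\dist(x,\partial\Omega)\bigr)_+\bigl(\overline v(x) - u(x)\bigr),
\]
which in one stroke matches the trace while leaving $u$ untouched off a boundary layer of width $1/k$. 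Your construction is additive: approximate $f$ by compactly supported functions (which automatically have zero trace) and add on a cut-off extension $\varphi_\delta G_m$ of the boundary datum. Both are sound; the paper's version reduces the bookkeeping to a single parameter $k$ and a single approximation step, while yours makes the decoupling of the interior and boundary contributions more explicit at the cost of juggling three parameters $(n,m,\delta)$. A couple of small remarks: your invocation of a \emph{compact} trace embedding is stronger than needed — boundedness (hence weak--weak continuity) of $\gamma\colon W^{1,p}(\Omega)\to L^p(\partial\Omega)$ already closes the identification $\gamma u = v$; and the parenthetical claim that $\delta$ is chosen ``to handle the $L^2(\partial\Omega)$ error $\|g_m-g\|$'' is a slip — $\delta$ controls the $L^2(\Omega)$ smallness of $\varphi_\delta G_m$ (and it depends on $m$ only through $\|G_m\|_\infty$), while the boundary error is handled entirely by the choice of $m$.
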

\begin{proof} 
Convexity of $\cE_\eps$ is clear. If $\eps >0$, lower semicontinuity of $\cE_\eps$ follows from the lower semicontinuity of the $L^p$-norm. Suppose that $(u^k, v^k)\in X_0$ is a sequence converging to $(u, v)$ in $X_0$. We need to prove that 
\[\liminf_{k \to \infty} \cE(u_k, v_k) \geq \cE_0(u, v).\]
If the l.h.s\ is finite, possibly restricting to a subsequence, we may assume that $u_k$ is bounded in $W^{1,p}(\Omega)$ and $\gamma u_k = v_k$ for all $k\in \bN$. In particular, $u_k$ converges to $u$ weakly in $W^{1,p}(\Omega)$. Then, our claim follows because of the continuity of the trace operator with respect to that convergence. 

If $\eps >0$, density of $D(\cE_\eps)$ in $X_\eps$ is clear, since the norm in $X_\eps$ is equivalent to the standard norm on $L^2(\Omega)$. In order to show density of $D(\cE_0)$ in $X_0$, we first observe that $C^1(\overline{\Omega})\times C^1(\partial \Omega)$ is dense in $X_0$. For any $(u,v) \in C^1(\overline{\Omega})\times C^1(\partial \Omega)$, $k \in \bN$, we define 
\[u_k(x) := u(x) + \left(1 - k\, \dist(x, \partial \Omega)\right)_+ \left(\overline{v}(x)-u(x)\right), \quad v_k(x) := v(x),\]
where $\overline{v}$ is any extension of $v$ to $C^1(\overline{\Omega})$. Then $(u_k, v_k)$ is a sequence of elements of $D(\cE_0)$ that converges to $(u,v)$ in $X_0$.
\end{proof}
%We also set $X_0 = L^2(\Omega)\times L^2(\partial \Omega)$ with the standard inner product. 

%For the sake of consistency we will show that both problems (\ref{r-bry-plap}) and (\ref{r-dbc-plap}) are well-posed.

Proposition \ref{r-fu} combined with 
\cite{Brezis} 
tells us that for any $\eps \geq 0$ there exists a unique gradient flow of $\cE_\eps$ defined on the whole $X_\eps$, which we denote by $S_\eps$. In order to show that the trajectories of the flows solve \eqref{r-dbc-plap} if $\eps =0$ and \eqref{r-bry-plap} if $\eps >0$, we need to characterize the subdifferential $\partial \cE_\eps$. 
\begin{proposition}
Let $\eps >0$, $p>1$  and let $u \in D(\cE_\eps)$. Then $u \in D(\partial \cE_\eps)$ if and only if  the distributional derivative $\dv (|\nabla u|^{p-2} \nabla u)$ belongs to $L^2(\Omega)$, in which case 
\[ \partial \cE_\eps(u) = \left\{ - b_\eps^{-1} \dv (|\nabla u|^{p-2} \nabla u)\right\}. \]

Let $(u,v) \equiv (u, \gamma u) \in D(\cE_0)$. Then $(u,v) \in D(\partial \cE_0)$ if and only if  the distributional derivative $\dv (|\nabla u|^{p-2} \nabla u)$ belongs to $L^2(\Omega)$, in which case 
\[ \partial \cE_0(u,v) = \left\{ (- \dv (|\nabla u|^{p-2} \nabla u), |\nabla u|^{p-2} \nabla u \cdot \nu)\right\}. \]
\end{proposition}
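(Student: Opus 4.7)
My plan is to treat both cases by the standard pattern for convex integral functionals: use the pointwise convexity inequality
\[\tfrac{1}{p}|w|^p \geq \tfrac{1}{p}|z|^p + |z|^{p-2} z \cdot (w-z) \qquad (z, w \in \bR^n)\]
to obtain the ``if'' direction via one integration by parts, and differentiate the defining subdifferential inequality along $t \mapsto u + t\varphi$ to extract the Euler--Lagrange identity for the converse.

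\textbf{Case $\eps > 0$.} Given $\xi \in \partial \cE_\eps(u)$, I would test with $v = u + t\varphi$ for $\varphi \in C^\infty(\overline{\Omega})$, $t > 0$, divide by $t$, and pass $t \to 0^+$; since $p>1$, dominated convergence produces
\[\int_\Omega |\nabla u|^{p-2}\nabla u \cdot \nabla \varphi\, \mathrm{d}x \geq \int_\Omega b_\eps \xi \varphi\, \mathrm{d}x,\]
and exchanging $\varphi$ with $-\varphi$ turns this into equality. Restricting first to $\varphi \in C_c^\infty(\Omega)$ identifies the interior distribution $-\di (|\nabla u|^{p-2}\nabla u)$ with $b_\eps \xi \in L^2(\Omega)$, while letting $\varphi$ range over all of $C^\infty(\overline{\Omega})$ forces the boundary term $\int_{\partial \Omega} |\nabla u|^{p-2}\nabla u \cdot \nu\, \gamma\varphi\, \mathrm{d}S$ to vanish, i.e., the homogeneous Neumann condition holds in the weak sense. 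For the converse, starting from the pointwise convexity inequality and integrating by parts once (using the $L^2$ divergence and the vanishing normal trace) yields $\cE_\eps(v) - \cE_\eps(u) \geq (-b_\eps^{-1} \di (|\nabla u|^{p-2}\nabla u), v-u)_{X_\eps}$ for every $v \in W^{1,p}(\Omega)$, giving the claimed singleton. The case $\eps = 0$ is entirely analogous, now with two-component perturbations $(u + t\varphi, \gamma u + t\gamma\varphi)$; interior test functions again give the bulk component $\xi = -\di (|\nabla u|^{p-2}\nabla u) \in L^2(\Omega)$, and comparison with integration by parts for general $\varphi \in C^\infty(\overline{\Omega})$ identifies the boundary component $\eta$ as the normal trace $|\nabla u|^{p-2}\nabla u \cdot \nu$, which is forced to lie in $L^2(\partial \Omega)$.

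\textbf{Main obstacle.} The delicate step is interpreting the boundary trace $|\nabla u|^{p-2}\nabla u \cdot \nu$. For $u \in W^{1,p}(\Omega)$ whose $p$-Laplacian belongs to $L^2(\Omega)$, this normal trace is a priori only an element of $W^{-1/p',p'}(\partial \Omega)$, defined via the Gauss--Green identity paired against traces of $W^{1,p}(\Omega)$ functions; the $C^2$ regularity of $\partial \Omega$ makes this rigorous. The substantive content of the $\eps > 0$ claim is that this boundary distribution actually vanishes, while the $\eps = 0$ claim asserts that it is represented by an $L^2(\partial \Omega)$ function -- in both cases this additional regularity is forced by the requirement that the subgradient lie in the Hilbert space $X_\eps$. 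I would handle this either by invoking the standard normal-trace theory for $L^{p'}$ vector fields with $L^2$ divergence on $C^2$ domains, or by arguing directly from the variational inequality combined with density of $C^1(\overline{\Omega})$ in $W^{1,p}(\Omega)$ and continuity of $\gamma \colon W^{1,p}(\Omega) \to L^p(\partial \Omega)$.
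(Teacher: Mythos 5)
Your proposal is correct and follows essentially the same route as the paper: differentiate $t\mapsto\cE_\eps(u+t\varphi)$ (one-sided, then use $\pm\varphi$ to get equality) to obtain the Euler--Lagrange identity, localize with $\varphi\in C_c^\infty(\Omega)$ to identify the bulk term, test against general $\varphi$ to kill (for $\eps>0$) or identify (for $\eps=0$) the normal trace, and run the convexity inequality plus Gauss--Green in reverse for the ``if'' direction. The paper does the same thing, citing Evans for the Gateaux differentiability of the $p$-Dirichlet energy where you invoke dominated convergence directly, and working with general $h\in W^{1,p}(\Omega)$ where you use $C^\infty(\overline\Omega)$ plus density; these are cosmetic differences. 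One remark worth recording: as stated, the ``if and only if'' in the proposition is slightly imprecise --- for $\eps>0$ the subdifferential is nonempty precisely when $\div(|\nabla u|^{p-2}\nabla u)\in L^2(\Omega)$ \emph{and} the normal trace vanishes, and for $\eps=0$ one needs the normal trace to lie in $L^2(\partial\Omega)$. The paper's own proof of the converse explicitly assumes the vanishing normal trace, as does yours, so the argument is sound; your ``Main obstacle'' paragraph is in fact a more careful articulation of the boundary-trace issue than the paper gives.
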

\begin{proof}
We will first take care of  $\cE_\eps$ for $\eps > 0.$ 

By definition, $\xi' \in X_\eps$ is a element of $\partial \cE_\eps(u)$ if and only if for all $h\in X_\eps$ we have
\begin{equation} \label{dynp_subd}
\cE_\eps(u+h) - \cE_\eps(u) \ge\int_\Omega b_\eps \xi' h  = \int_\Omega \xi h ,
\end{equation} 
where $\xi = b_\eps \xi' \in L^2(\Omega)$. 
Reasoning as in the proof of \cite[8.2., Theorem 4]{Evans}, we check that the function $t \mapsto \cE_\eps(u + th)$ is differentiable for any $u, h \in W^{1,p}(\Omega)$, i.\,e., 
\[
\cE_\eps(u+th) - \cE_\eps(u) = t \int_\Omega |\nabla u|^{p-2}\nabla u \cdot \nabla h + o(t).
\]
Thus, choosing $\pm th$ in place of $h$ in \eqref{dynp_subd}, we get
\[ 
\pm t \int_\Omega |\nabla u|^{p-2}\nabla u \cdot \nabla h + o(t) \geq \pm \int_\Omega \xi h,
\]
whence 
\begin{equation}\label{dynp_subd2} \int_\Omega |\nabla u|^{p-2}\nabla u \cdot \nabla h = \int_\Omega \xi h.
\end{equation} 
Consequently, distributional divergence of $|\nabla u|^{p-2}\nabla u$ belongs to $L^2(\Omega)$ and we have $\xi = - \dv (|\nabla u|^{p-2}\nabla u).$ Hence, the trace $|\nabla u|^{p-2}\nabla u \cdot \nu$ exists as a functional on $\gamma (W^{1,p}(\Omega)\cap L^2(\Omega))$ defined by the equality
\[
\langle |\nabla u|^{p-2}\nabla u \cdot\nu, \gamma h\rangle = \int_\Omega |\nabla u|^{p-2}\nabla u \cdot \nabla h + \int_\Omega \dv (|\nabla u|^{p-2}\nabla u) h.  
\]
Since \eqref{dynp_subd2} holds for any $h \in W^{1,p}(\Omega)$, $(|\nabla u|^{p-2}\nabla u)\cdot \nu =0$. On the other hand, suppose that $\xi = - \dv (|\nabla u|^{p-2}\nabla u)$ and $|\nabla u|^{p-2}\nabla u \cdot\nu = 0$. Then, by convexity of the function $v \mapsto |v|^p$, we have
\[
\cE_\eps(u+h) - \cE_\eps(u) \ge\int_\Omega |\nabla u|^{p-2}\nabla u \cdot \nabla h = \int_\Omega \xi h.
\] 
This concludes the characterization of $\partial\cE_\eps$ for $\eps>0.$

In case $\eps=0$ we proceed in a similar manner. A pair 
$(\xi_1, \xi_2) \in X_0$ belongs to $\partial\cE_0(u,v)$ if and only if
\begin{equation} \label{dynp_subd3}
\cE_0(u+h_1, v+h_2) - \cE_0(u, v) \ge \int_\Omega \xi_1 h_1 + \int_{\partial\Omega} \xi_2 h_2
\end{equation}
for all $(h_1, h_2) \in X_0$. The LHS is finite when $h_1\in W^{1,p}(\Omega)$ and $\gamma h_1 = h_2.$ In this case, arguing as above we reach the identity
\[
\int_\Omega |\nabla u |^{p-2} \nabla u \nabla h_1 = \int_\Omega \xi_1 h_1 + \int_{\partial\Omega } \xi_2 h_2
\]
for all $h_1\in W^{1,p}(\Omega)$, $h_2\in L^2(\partial\Omega)$ such that $\gamma h_1 = h_2.$ This implies that
\begin{equation}\label{dynp_subd4} 
\xi_1 = -\dv (|\nabla u|^{p-2}\nabla u)\qquad\text{and}\qquad \xi_2 = |\nabla u|^{p-2}\nabla u\cdot \nu.
\end{equation} 
On the other hand, if $(\xi_1, \xi_2) \in X_0$ are as in \eqref{dynp_subd4}, then \eqref{dynp_subd3} holds, which concludes the characterization of $\partial \cE_0$.
\end{proof}
%These characterizations of $\ml{\partial}\cE_\eps$, $\eps\ge 0$ show that \eqref{r-dbc-plap} (resp.\ \eqref{r-bry-plap}) is the gradient flow of $\cE_0$ (resp.\ $\cE_\eps$, $\eps>0$), as desired.
%Another virtue of this corollary is that $u_t$ is in $L^2_{loc}(]0,\infty[, X_\eps)$, this means that the normal trace of $|\nabla u|^{p-2}\nabla u$ is well-defined as a continuous functional over $\gamma W^{1,p}(\Omega)$ for a.\,e.\ $t>0$. 

In order to discuss convergence of gradient flows $S_\eps$, we need to define suitable connecting operators. We have to make some preparations. Let us define $\Phi^\Omega\colon\partial\Omega\times ]\!-\!1,1[\to \bR^N$ by the following formula,
\begin{equation} \label{def_Phi} 
\Phi^\Omega(x', x_n) = x'- \nu(x') x_n,
\end{equation} 
where $\nu(x')$ is the outer normal vector to $\partial\Omega$ at $x'$. In non-ambiguous cases we will write shortly $\Phi$. 

Clearly, for every $x'\in \partial\Omega$ the derivative  $D\Phi(x', 0)$ is non-singular. Hence, there is such $\eps_0>0$ that $\Phi$ restricted to $\partial\Omega\times ]-\eps_0,\eps_0[$ is a diffeomorphism.  Let us notice that for all $\eps\in ]0,\eps_0[$ we have 
\begin{equation}\label{radius}
\Phi(\partial\Omega \times \{\eps\}) = \partial \Omega^\eps.
\end{equation}
Indeed, suppose that $y \in \partial \Omega$. It is well known that there exists a nearest point projection $x$ of $y$ onto $\partial \Omega$, and the vector $x-y$ is perpendicular to $\partial \Omega$. More precisely, we have $x-y = \eps \nu(x)$, so 
\[ y = x - \eps \nu(x) = \Phi(x, \eps).\]
On the other hand, suppose that $y = \Phi(x, \eps)$ with $x \in \partial \Omega$. Let us denote $\delta:=\dist(y, \partial \Omega^\eps$). Clearly $\delta \leq \eps$. If the inequality were strict, there would exist a nearest point projection $z$ of $y$ onto $\partial \Omega$ with 
\[\Phi(z, \delta) = z - \delta \nu (z) = y = \Phi(x,\eps),\]
violating injectivity of $\Phi$. Thus, $\delta = \eps$, whence \eqref{radius} follows. The same argument also shows that the nearest point projection $\pi(x)$ of $x \in \partial \Omega^\eps$ onto $\partial \Omega$ is unique for $\eps \in [0, \eps_0[$, and
\begin{equation} \label{pi_phi} 
\Phi(\pi(x), \eps) = x \quad \text{for } x \in \partial \Omega^\eps,\ \eps \in [0, \eps_0[. 
\end{equation}
\begin{comment} 
Indeed, let us set $y = \Phi(x,\eps)$ and $\delta= \sup\{ \eta\in ]0,\eps_0[\,|\, B(y, \eta) \cap \partial\Omega = \emptyset\}.$ Of course, $\delta\le\eps$. If $\delta <\eps$, then there is $z\in \partial\Omega\cap \partial B(y,\delta)$. \ml{Since at the same time $\partial \Omega \cap B(y,\delta) = \emptyset$, we see that $\partial \Omega$ and $\partial B(y,\delta)$ are tangent at $z$. Thus, $\nu(z) = (z-y)/\delta$ and
\[
\Phi (x,\eps) = y = z - \delta (z-y)/\delta = z - \delta \nu(z) = \Phi(z,\delta).
\]}
But this would mean that injectivity of $\Phi$ is violated. Thus, $\delta = \eps$, whence \eqref{radius} follows. \ml{The same argument yields that $\Phi(\cdot, \eps)$ is the inverse map to $\pi|_{\Omega^\eps}$, where $\pi$ is the nearest point projection onto $\partial \Omega$, i.e.,
\[ \Phi(\pi x, \eps) = x \quad \text{for } x \in \partial \Omega^\eps.\]
}
\end{comment} 

Since $\Phi$ is smooth, formulae
\begin{equation} \label{def_m_L} 
m_\eps w^\eps( x') = \frac{1}{\eps} \int_0^\eps  w^\eps(\Phi(x',x_n))\dd x_n, \quad L_\eps w^\eps = (w^\eps, m_\eps w^\eps)
\end{equation} 
define bounded linear operators $m_\eps = m^\Omega_\eps \colon X_\eps \to L^2(\partial \Omega)$, $L_\eps \colon X_\eps \to X_0$.

\begin{lemma}\label{lem:pheatX}
$X_\eps$ Mosco-converge to $X_0$ along $L_\eps$. 
\end{lemma}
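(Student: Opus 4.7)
The plan is to verify the two hypotheses \ref{H1} and \ref{H2} separately, in both cases exploiting the tubular neighborhood parametrization $\Phi$ of $\Omega\setminus\Omega^\eps$. The key geometric ingredient is that the Jacobian $J(x',x_n)$ of $\Phi$ satisfies $J(x',0)=1$ uniformly on $\partial\Omega$, so that by smoothness of $\partial\Omega$ there is a constant $C$ with $|J(x',x_n)-1|\le C\eps$ for $x_n\in[0,\eps]$ and $\eps<\eps_0$. Consequently, for any $f\in L^1(\Omega\setminus\Omega^\eps)$, change of variables gives
\[
\int_{\Omega\setminus\Omega^\eps} f(x)\,\dd x = \int_{\partial\Omega}\int_0^\eps f(\Phi(x',x_n))\,J(x',x_n)\,\dd x_n\,\dd\sigma(x'),
\]
with the multiplicative error $1\pm C\eps$.

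For \ref{H1}, I would bound $\|L_\eps w^\eps\|_{X_0}^2 = \|w^\eps\|_{L^2(\Omega)}^2 + \|m_\eps w^\eps\|_{L^2(\partial\Omega)}^2$. The first summand equals $\int_\Omega |w^\eps|^2$. For the second, Jensen's inequality gives $|m_\eps w^\eps(x')|^2 \le \frac{1}{\eps}\int_0^\eps |w^\eps(\Phi(x',x_n))|^2\,\dd x_n$; integrating over $\partial\Omega$ and applying the change of variables above yields
\[
\|m_\eps w^\eps\|_{L^2(\partial\Omega)}^2 \le \frac{1+C\eps}{\eps}\int_{\Omega\setminus\Omega^\eps}|w^\eps|^2.
\]
Combining and recalling that $b_\eps \equiv 1$ on $\Omega^\eps$ and $b_\eps = 1 + \eps^{-1}$ on $\Omega\setminus\Omega^\eps$, I obtain $\|L_\eps w^\eps\|_{X_0}^2 \le (1+C\eps)\|w^\eps\|_{X_\eps}^2$, so $\|L_\eps\|_{B(X_\eps,X_0)}\le\sqrt{1+C\eps}\to 1$.

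For \ref{H2}, given $(u,v)\in X_0$, I would define the recovery sequence
\[
w^\eps(x) = \begin{cases} u(x) & \text{if } x\in \Omega^\eps,\\ v(\pi(x)) & \text{if } x\in\Omega\setminus\Omega^\eps,\end{cases}
\]
where $\pi$ is the nearest-point projection onto $\partial\Omega$, well defined on $\Omega\setminus\Omega^\eps$ by the discussion following \eqref{def_Phi}. By \eqref{pi_phi}, for every $x'\in\partial\Omega$ and $x_n\in[0,\eps]$ we have $\pi(\Phi(x',x_n))=x'$, so $w^\eps(\Phi(x',x_n))=v(x')$ and therefore $m_\eps w^\eps = v$ identically. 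Moreover, $\|w^\eps - u\|_{L^2(\Omega)}^2 \le 2\int_{\Omega\setminus\Omega^\eps}(|u|^2 + |v\circ\pi|^2)$, and the change-of-variables formula shows $\int_{\Omega\setminus\Omega^\eps}|v\circ\pi|^2 \le (1+C\eps)\eps\,\|v\|_{L^2(\partial\Omega)}^2 \to 0$, while $\int_{\Omega\setminus\Omega^\eps}|u|^2\to 0$ by absolute continuity. Hence $L_\eps w^\eps \to (u,v)$ strongly in $X_0$, which certainly implies the weak convergence required by \ref{H2}. As a bonus, the same computation gives $\|w^\eps\|_{X_\eps}^2 = \int_{\Omega^\eps}|u|^2 + (1+\eps^{-1})\int_{\Omega\setminus\Omega^\eps}|v\circ\pi|^2 \to \|u\|_{L^2(\Omega)}^2 + \|v\|_{L^2(\partial\Omega)}^2$, so in fact $w^\eps$ converges strongly to $(u,v)$ along $L_\eps$.

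The only delicate point is the Jacobian control, which is routine once one notes that $\Phi$ is $C^1$ on $\partial\Omega\times]\!-\!\eps_0,\eps_0[$ and restricts to the identity on $\partial\Omega\times\{0\}$; everything else reduces to Jensen's inequality and the pointwise identity $\pi\circ\Phi(\cdot,x_n)=\mathrm{id}_{\partial\Omega}$.
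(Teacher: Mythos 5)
Your proposal is correct and follows essentially the same route as the paper: it verifies \ref{H1} via Jensen's inequality plus the change of variables through $\Phi$ (using $|\det D\Phi|=1$ on $\partial\Omega$), and verifies \ref{H2} with the identical recovery sequence $w^\eps=w_1\mathbf 1_{\Omega^\eps}+(w_2\circ\pi)\mathbf 1_{\Omega\setminus\Omega^\eps}$, noting $m_\eps w^\eps=w_2$. The only cosmetic difference is that you quantify the Jacobian bound as $1+C\eps$ via $C^1$-regularity, whereas the paper simply invokes continuity of the Jacobian factor and its value $1$ on $\partial\Omega$; both yield the required asymptotic contractivity.
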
 

\begin{proof} 
We first show asymptotic contractivity of $L_\eps$.  For $w^\eps \in X_\eps$ we have 
\[\|L_\eps w^\eps\|_{X_0}^2 = \|w^\eps\|_{L^2(\Omega)}^2 + \|m_\eps w^\eps\|_{L^2(\partial \Omega)}^2 .
\]
Now, we have to evaluate the last term on the r.h.s.,
\[
\|m_\eps w^\eps\|_{L^2(\partial \Omega)}^2\le \frac 1\eps\int_{\partial\Omega}\int_0^\eps w^2(\Phi(x', x_n)) \, \dd x_n \dd\cH^{n-1}(x') = 
 \frac 1\eps\int_{\Omega\setminus\overline{\Omega}^\eps} w^2 (y)J^{-1}(y)\,\dd y,
\]
where $J^{-1}(y) = |\det D\Phi^{-1}(y)|$.  This observation leads us to
\[
\|L_\eps w^\eps\|_{X_0}^2 
\leq \int_\Omega \left(1 + J^{-1} \tfrac{1}{\eps} \mathbf 1_{\Omega\setminus\Omega^\eps}
\right) (w^\eps)^2 \le \max_{\overline \Omega\setminus\Omega^\eps}J^{-1}\int_{\Omega}
b_\eps (w^\eps)^2 
= \max_{\overline \Omega\setminus\Omega^\eps}J^{-1}\|w^\eps\|_{X_\eps}^2 , 
\]
where we have used (\ref{radius}). Since $J^{-1}=1$ on $\partial\Omega$ (and $J^{-1}$ is continuous), this implies \ref{H1}.

Next, given $w = (w_1,w_2) \in X_0$, we take $w^\eps \in X_\eps$ defined by 
\[\label{w_dbc_approx} 
 w^\eps(x) = 
\left\{\begin{array}{ll}w_1(x) & \text{if } x\in {\Omega^{\eps}}, \\ 
w_2(\pi  (x)) & \text{if } 
x \in \Omega\setminus\overline \Omega^{\eps}.\end{array}\right.
%\left\{\begin{array}{ll} w_1(x', x_N) & \text{if } x_N \in ]\eps, 1 - \eps[, \\ 
%w_2(x',0) & \text{if } x_N \in ]0,\eps[, \\
%w_2(x',1) & \text{if } x_N \in ]1-\eps,1[. \end{array}\right.
\]
Then,
\[\|w^\eps\|_{X_\eps}^2 = \int_{\Omega^\eps} w_1^2\, \dd x + \int_{ \Omega\setminus\Omega^\eps} (1+ \frac1\eps) w_2^2(\pi(x))\,\dd x.\] 
After a change of variables $x = \Phi(y)$, using \eqref{pi_phi}, we get
\[
\frac1\eps\int_{ \Omega\setminus\Omega_\eps}  w_2^2(\pi(x))\,\dd x
= \int_{\partial\Omega} w_2^2(y') \frac1\eps\int_0^\eps K(y',y_n)\,\dd y_n \dd \cH^{n-1}(y') \to \int_{\partial\Omega} w_2^2\, \dd \cH^{n-1},
\]
because $K: = |\det D\Phi|$ satisfies
\begin{equation}\label{r-J}
K(y',0) = 1 \quad \text{for } y' \in \partial \Omega. 
\end{equation} 
Hence,
\[ \|w^\eps\|_{X_\eps}^2 \to  \|w\|_{X_0}^2 \]
and 
\[L_\eps w^\eps = (w^\eps, w_2) \to (w_1, w_2) = w \quad \text{in } X_0.\]
Thus, (H2) holds.
\end{proof} 

\begin{lemma}\label{lem:pheatE}
$\cE_\eps$ Mosco-converge to $\cE_0$ along $L_\eps$. 
\end{lemma}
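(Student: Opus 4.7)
The plan is to verify the two conditions \ref{H3} and \ref{H4}. The main technical tool for both, valid for $w \in W^{1,p}(\Omega)$ (first for $C^1$ functions, then by approximation), is the representation formula
\[
m_\eps w(x') - \gamma w(x') = -\frac{1}{\eps}\int_0^\eps (\eps - t)\, \nu(x') \cdot \nabla w(\Phi(x', t))\,\dd t,
\]
obtained from the fundamental theorem of calculus along the normal direction $t \mapsto \Phi(x', t)$ together with Fubini. Jensen's inequality followed by the change of variables $y = \Phi(x', t)$ bounds its $L^p(\partial\Omega)$ norm by $C\eps^{1/p'}\|\nabla w\|_{L^p(\Omega \setminus \Omega^\eps)}$, which vanishes as $\eps \to 0$ whenever $\|\nabla w\|_{L^p(\Omega)}$ stays bounded.

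For \ref{H3}, given $w^\eps \in X_\eps$ asymptotically bounded with $L_\eps w^\eps \rightharpoonup (w_1, w_2)$ in $X_0$, I would first pass to a subsequence along which $\cE_\eps(w^\eps)$ is bounded (otherwise the inequality is trivial). Combining $L^p$-boundedness of $\nabla w^\eps$ with $L^2$-boundedness of $w^\eps$ (the latter inherited from $\|w^\eps\|_{X_\eps}$), a Poincar\'e--Sobolev argument yields $W^{1,p}(\Omega)$ boundedness. Extracting a further subsequence, $w^\eps \rightharpoonup w$ in $W^{1,p}(\Omega)$ and $w^\eps \to w$ in $L^2(\Omega)$ by Rellich--Kondrachov, so the first component of $L_\eps$ identifies $w = w_1$. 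The representation formula, together with continuity of the trace $\gamma \colon W^{1,p}(\Omega) \to L^p(\partial\Omega)$, then gives $m_\eps w^\eps \rightharpoonup \gamma w$ in $L^p(\partial\Omega)$, which must coincide with the weak $L^2(\partial\Omega)$ limit $w_2$ (test against $C^\infty(\partial\Omega)$). Weak lower semicontinuity of $\int_\Omega |\nabla u|^p$ on $W^{1,p}(\Omega)$ closes the argument.

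For \ref{H4}, given $(u, v) \in D(\cE_0)$ with $v = \gamma u$, I would take $w^\eps := u$ (constant in $\eps$). Then $\cE_\eps(w^\eps) \equiv \cE_0(u, v)$ and the first component of $L_\eps w^\eps$ is constantly $u$, so only
\[
\frac{1}{\eps}\int_{\Omega \setminus \Omega^\eps} u^2\,\dd x \to \int_{\partial\Omega} v^2\,\dd\cH^{n-1} \quad \text{and} \quad m_\eps u \to v \text{ in } L^2(\partial\Omega)
\]
remain. Both are straightforward for $u \in C^1(\overline\Omega)$: the first from a change of variables combined with $K(\cdot, 0) = 1$, the second from uniform convergence of $m_\eps u$ to $\gamma u$. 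For general $u \in W^{1,p}(\Omega)$ I would approximate by $C^1$ functions in $W^{1,p}(\Omega)$ and pass to the limit, using the uniform contractivity of $L_\eps$ established in the proof of Lemma \ref{lem:pheatX}.

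The main obstacle I anticipate is the density step in \ref{H4} when $p < 2$, since trace convergence holds only in $L^p(\partial\Omega)$ while the target is $L^2(\partial\Omega)$. I would address this by combining the $L^p$-weak convergence $m_\eps u \rightharpoonup \gamma u$ with the Jensen bound $\|m_\eps u\|_{L^2(\partial\Omega)}^2 \leq C \eps^{-1}\int_{\Omega \setminus \Omega^\eps} u^2\,\dd x$, thereby reducing the required $L^2$-strong convergence to the strip-to-trace identity. The latter can be justified using higher integrability of $u$ from the embedding $W^{1,p}(\Omega) \hookrightarrow L^{p^*}(\Omega)$, which permits a chain-rule argument for $u^2 \in W^{1,s}(\Omega)$ with an appropriate $s \geq 1$ and lets one apply the standard trace theorem to $u^2$.
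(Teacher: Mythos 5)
Your treatment of \ref{H3} is essentially the paper's argument. The Fubini-based representation formula for $m_\eps w - \gamma w$ and the subsequent Jensen/change-of-variables estimate are a repackaging of the Young--H\"older computation in the paper; the remaining steps (extracting a $W^{1,p}$-weakly convergent subsequence, letting the estimate identify the weak boundary limit $w_2$ with $\gamma w_1$, and using lower semicontinuity of the $L^p$ norm) are the same. The invocation of Rellich--Kondrachov to get strong $L^2(\Omega)$ convergence is superfluous --- the weak $L^2(\Omega)$ convergence of the first component of $L_\eps w^\eps$ already identifies the limit --- and in fact for small $p$ (precisely $p<2n/(n+2)$) Rellich does not give compactness into $L^2$ anyway, so it is better to drop it as you don't need it.

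For \ref{H4} both you and the paper take the constant recovery sequence $w^\eps = u$, but you are considerably more careful: you correctly observe that one must also verify the norm condition $\|w^\eps\|_{X_\eps}\to\|w\|_{X_0}$, which reduces precisely to the strip-to-trace identity $\frac{1}{\eps}\int_{\Omega\setminus\Omega^\eps}u^2 \to \int_{\partial\Omega}(\gamma u)^2$, and that $L^2(\partial\Omega)$ convergence of $m_\eps u$ does not follow from $W^{1,p}$ regularity alone when $p<2$. The paper passes over this with ``of course $m_\eps u$ tends to $\gamma u$'' and does not address the norm condition at all, so your concern is well placed. However, your proposed fix has a residual gap: the chain-rule argument showing $u^2\in W^{1,s}$ with $s\ge1$ requires $u\nabla u\in L^1$, hence $u\in L^{p'}$ with $p'=p/(p-1)$; from $u\in L^2\cap L^{p^*}$ this is available only when $\max(2,p^*)\ge p'$, i.e.\ when $p\ge 2n/(n+1)$. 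For $1<p<2n/(n+1)$ (a nonempty range for every $n\ge2$) the proposed route does not close. A fix that covers all $p>1$ is to replace the constant recovery sequence by a slowly growing truncation $w^\eps = T_{k(\eps)}u$: for each fixed $k$ one has $T_k u\in W^{1,p}(\Omega)\cap L^\infty(\Omega)$, whence $(T_k u)^2\in W^{1,p}(\Omega)$ by the ordinary chain rule, the strip-to-trace identity holds for $(T_k u)^2$, and $m_\eps(T_k u)\to T_k(\gamma u)$ in every $L^q(\partial\Omega)$ by boundedness; one then passes to the limit $k\to\infty$ in $L^2(\partial\Omega)$ by dominated convergence and closes with a diagonal argument in $\eps$. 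Both $\cE_\eps$ and the $X_\eps$-norm behave well under truncation, so this does not disturb the other two requirements of \ref{H4}.
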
 

\begin{proof} 
%Now, we are going to show that $\cE_\eps$ Mosco-converges to $\cE_0$ along $L_\eps$ as $\eps\to 0^+$. 
First, we check condition \ref{H3}. Let us take a sequence $w^\eps\in X_\eps$ such that 
\[
\limsup_{\eps\to 0} \| w^\eps\|_{X_\eps} <\infty\qquad\hbox{and}\qquad
L_\eps w^\eps \rightharpoonup (u, v).
\]
We also suppose that 
\[\liminf_{\eps\to 0} \cE_\eps(w^\eps) < \infty\] 
(otherwise there is nothing to prove). Then $u \in W^{1,p}(\Omega)$ and, by the lower semicontinuity of the $L^p$ norm,
\[
\liminf_{\eps\to 0} \cE_\eps(w^\eps)  = \liminf_{\eps\to 0} \frac 1p \int_\Omega |\nabla w^\eps|^p \geq    \frac 1p \int_\Omega |\nabla u|^p  .
\]
Next we carry out estimates near the boundary. By Young's inequality, for $\cH^{n-1}$-a.\,e.\ $x' \in \partial \Omega$,
\begin{multline*}  |m_\eps w^\eps(x') - \gamma w^\eps(x')|^p = \left|\frac{1}{\eps}\int_0^\eps w^\eps(\Phi(x', s)) - w^\eps(\Phi(x', 0))\dd s\right|^p \\ \leq \frac{1}{\eps}\int_0^\eps | w^\eps(\Phi(x', s)) - w^\eps(\Phi(x', 0))|^p\dd s  = \frac{1}{\eps}\int_0^\eps \left| \int_0^s \nabla w^\eps(\Phi(x',  \sigma)) \cdot \Phi_{x_n}(x', \sigma)) \dd \sigma\right|^p\dd s.    
\end{multline*} 
Taking into account that $|\Phi_{x_n}(x', \sigma)| = |\nu(x')|=1$, we get using H\" older's inequality
\[|m_\eps w^\eps - \gamma w^\eps|^p \leq \frac{1}{\eps}\int_0^\eps  s^{p-1}\int_0^s \left|\nabla w^\eps(\Phi(x',  \sigma))\right|^p  \dd \sigma\dd s  \leq  \eps^{p-1}\int_0^\eps \left|\nabla w^\eps(\Phi(x', s))\right|^p  \dd s.\]
Thus, changing variables as before, 
\begin{multline} \label{mtrace_est}
\| m_\eps w^\eps - \gamma w^\eps\|_{L^p(\partial\Omega)}^p \leq  \eps^{p-1}\int_{\partial \Omega}\int_0^\eps \left|\nabla w^\eps(\Phi(x', s))\right|^p  \dd s \,\dd \cH^{n-1}\\= \eps^{p-1}\int_{ \Omega \setminus \Omega^\eps}\left|\nabla w^\eps(\Phi(y))\right|^p J^{-1}(y) \,\dd y \leq \eps^{p-1} \max_{\overline \Omega\setminus\Omega^\eps}J^{-1} \| \nabla w^\eps\|_{L^p(\Omega)}^p
\end{multline}

%\[
%\| m_\eps(w^\eps) - \gamma (w^\eps)\|_{L^p(\partial\Omega)}^p \le \eps^{2(p-1)} \| \nabla w^\eps\|_{L^p(\Omega)}^p \to 0
%\]
Let $(w^{\eps_k})$ be a subsequence of $(w^\eps)$ such that $\|\nabla w^{\eps_k}\|_{L^p(\Omega)}$ is bounded and $w^{\eps_k} \rightharpoonup u$ in $W^{1,p}(\Omega)$. The r.h.s.\ of \eqref{mtrace_est} converges to $0$ along that subsequence. On the other hand, $\gamma w^{\eps_k} \rightharpoonup \gamma u$. 
Hence, $\gamma u =v$ and so 
\[
\liminf_{\eps\to 0^+} \cE_\eps(w^\eps) \geq \frac 1p \int_\Omega |\nabla u|^p = \cE_0(u,v),
\]
i.e.\ \ref{H3} follows. 

It remains to establish \ref{H4}. Let $(u,v)\in D(\cE_0)$, whence $v = \gamma u$. We take $w^\eps = u$. Then, $L_\eps w^\eps = (u, m_\eps u )$. Of course $m_\eps u$ tends to $\gamma u =v$. Hence,
\[
\lim_{\eps\to 0^+} \cE_\eps(w^\eps) = \frac 1p \int_\Omega |\nabla u|^p = \cE_0(u,v) 
\]
and \ref{H4} follows.
\end{proof} 
\begin{comment}
Here is our final observation.
\begin{corollary}\label{c-plap}
The flow of (\ref{r-bry-plap}) constructed in Corollary \ref{r-fu} part ($\eps>0$) weakly converges to the flow of (\ref{r-dbc-plap}) constructed in Corollary \ref{r-fu} part ($\eps=0$).
\end{corollary}
\begin{proof} The claim follows from the fact that $\cE_p$ Mosco-converges to $\cE_0$ along $L_\eps$ and Theorem \ref{thm:Mosco}.
\end{proof} 

%A few comments are in order.

\begin{remark}
The argument we presented does not depend on the boundary condition (\ref{r-bry-plap}$_2$). We could equally impose the homogeneous Dirichlet boundary data. 
\end{remark}
\end{comment}

Appealing to Theorem \ref{thm:Mosco}, we deduce
\begin{theorem} \label{thm:pflow_conv}
    $S_\eps$ uniformly converge to $S_0$ along $L_\eps$. \qed 
\end{theorem}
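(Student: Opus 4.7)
The proof reduces to a direct invocation of the abstract convergence theorem, so my plan is simply to verify that every hypothesis of Theorem \ref{thm:Mosco} has already been established in the preceding material for this particular setting, and then apply it. First, I would note that Lemma \ref{lem:pheatX} has shown Mosco-convergence of the spaces $X_\eps$ to $X_0$ along the connecting operators $L_\eps$ defined in \eqref{def_m_L}, establishing both \ref{H1} (via the bound involving $\max_{\overline{\Omega}\setminus \Omega^\eps} J^{-1}$, which tends to $1$ since $J^{-1} = 1$ on $\partial\Omega$) and \ref{H2} (via the explicit recovery sequence built out of the nearest-point projection $\pi$ onto $\partial \Omega$). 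Second, Lemma \ref{lem:pheatE} has already established the Mosco-convergence of the functionals $\cE_\eps$ to $\cE_0$ along $L_\eps$, namely conditions \ref{H3} and \ref{H4}.

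Next, I would invoke Proposition \ref{r-fu}, which guarantees that $\cE_\eps$ for $\eps \geq 0$ are convex, proper and lower semicontinuous with dense domains in the respective Hilbert spaces. By the Br\'ezis theory recalled in Section \ref{sec2}, this ensures that $S_\eps$ and $S_0$ are well-defined gradient flows on $\overline{D(\cE_\eps)}$, $\overline{D(\cE_0)}$ respectively, so the uniform convergence statement of Definition \ref{con-flo} is meaningful and in fact applies to arbitrary initial data in $X_\eps$, $X_0$.

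With these inputs in hand, Theorem \ref{thm:Mosco} applies verbatim and yields the desired uniform convergence of $S_\eps$ to $S_0$ along $L_\eps$. There is effectively no obstacle to overcome at this final step, because the genuine technical content has been absorbed into the two preceding lemmas, principally into the trace-type estimate \eqref{mtrace_est} used to verify \ref{H3} and the change-of-variables built from the tubular neighbourhood diffeomorphism $\Phi$ of \eqref{def_Phi}. Consequently, a one-line proof citing Lemmas \ref{lem:pheatX} and \ref{lem:pheatE} together with Theorem \ref{thm:Mosco} suffices.
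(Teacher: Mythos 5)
Your proposal is correct and coincides exactly with what the paper does: the theorem is stated with an immediate \qed precisely because it follows by applying Theorem~\ref{thm:Mosco} to the conclusions of Lemmas~\ref{lem:pheatX} and~\ref{lem:pheatE}, as you observe. The extra remark about Proposition~\ref{r-fu} ensuring well-posedness of the flows is a sensible consistency check, though it is already implicit in the setup.
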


We note that Theorem \ref{thm:pflow_conv} extends \cite[Theorem 3.1]{glr} to the array of the $p$-heat equation with $p>1$.

\begin{comment}
\subsection{Case $p>1$ with $a_{\eps}$} 

Let us now consider the boundary layer limit for gradient flows of the family of functionals 
\[\cE_{\eps}(u) = \int_\Omega a_\eps |\nabla u|^p, \quad a_\eps(x) = 1 + \eps^{-\alpha} \mathbf 1_{\Omega \setminus \Omega_\eps}(x) \]
with $p>1$ with respect to the family of spaces $X_{\eps} = L^2(\Omega)$ with scalar product 
\[(v,w)_{\eps} = \int_{\Omega} b_{\eps} vw, \quad b_{\eps}(x) = 1 + \eps^{-1} \mathbf 1_{\Omega \setminus \Omega_\eps}(x).\] 
Let $X_0 = L^2(\Omega)\times L^2(\partial \Omega)$. We will show that $\cE_\eps$ $\Gamma$-converges in our sense to $\cE_0\colon X_0 \to [0, +\infty[$ given by 
\[ \cE_0(u,v) = \int_\Omega |\nabla u|^p + \chi_{\gamma u = v}\] 
if $\alpha < 1$ or  \PR{[IS $\chi$  A CHARACTERISTIC OR AN INDICATOR FUNCTION?]}
\[ \cE_0(u,v) = \int_\Omega |\nabla u|^p + \int_{\partial \Omega} |\nabla v|^p +\chi_{\gamma u = v}\] 
if $\alpha = 1$. The proof is based on the inequality 
\[\frac{1}{\eps}\int_0^\eps\int_{\partial \Omega} |\nabla u|^p \leq \int_{\partial \Omega} \left|\nabla \frac{1}{\eps}\int_0^\eps u\right|^p\]
    
\end{comment}

\subsection{Limiting behavior of the flow with dynamic boundary condtion }\label{sec5.2}
Now, let us consider the following variant of \eqref{r-dbc-plap} with parameter $\tau >0$:
\begin{align}\label{r-dbc-tau}
&u_t  = \dv (|\nabla u|^{p-2}\nabla u) &\text{in } ]0,\infty[\times \Omega,\nonumber\\
& \tau u_t+  (|\nabla u|^{p-2}\nabla u) \cdot\nu =0 & \text{in } ]0, \infty[ \times \partial\Omega,\\
& u(0,\cdot) = u_0. & \nonumber
\end{align}
Let $X_\tau$ be the Hilbert space $L^2(\Omega) \times L^2(\partial\Omega)$ with the scalar product 
\[(w,v)_{X_\tau} = \int_\Omega w_1 v_1 + \tau \int_{\partial \Omega} w_2 v_2 . \]
As before, we can show that the system \eqref{r-dbc-tau} coincides with the gradient flow of $\cE_\tau \colon X_\tau \to [0, \infty]$ given by 
\[
\cE_\tau(u, v)=
\left\{
\begin{array}{ll}
 \frac 1p\int_\Omega |\nabla u|^p & \text{if }u\in  W^{1,p}(\Omega),\ \gamma u = v,\\
 \infty & \text{otherwise.}
\end{array}
\right.
\]
Moreover, it is easy to deduce from Theorem \ref{thm:pflow_conv} that the system \eqref{r-dbc-tau} arises as a limit of boundary layer problems with weight $b_{\eps,\tau}(x) = 1 + \tau \eps^{-1} \mathbf 1_{\Omega \setminus \Omega^\eps}(x)$ as $\eps \to 0^+$. 

We now wish to investigate the limiting behavior of solutions to \eqref{r-dbc-tau} as $\tau \to 0^+$ and as $\tau \to \infty$. We expect the limits to solve the homogeneous Neumann (respectively, Dirichlet) problem for the $p$-heat equation: 
\begin{align}\label{r-Neu}
&u_t  = \dv (|\nabla u|^{p-2}\nabla u) &\text{in } ]0,\infty[\times \Omega,\nonumber\\
& \nabla u \cdot\nu =0 & \text{in } ]0, \infty[ \times \partial\Omega,\\
& u(0,\cdot) = u_0, & \text{in } \Omega; \nonumber
\end{align}
(respectively,
\begin{align}\label{r-Dir}
&u_t  = \dv (|\nabla u|^{p-2}\nabla u) &\text{in } ]0,\infty[\times \Omega,\nonumber\\
& u =0 & \text{in } ]0, \infty[ \times \partial\Omega,\\
& u(0,\cdot) = u_0, & \text{in } \Omega). \nonumber
\end{align}
Let us denote $X_0 = X_\infty : = L^2(\Omega)$ and define $\cE_0 \colon X_0 \to [0, \infty]$, as well as $\cE_\infty \colon X_\infty \to [0, \infty]$ by 
\[ \cE_0 = \left\{ 
\begin{array}{ll}
  \int_\Omega \frac 1p |\nabla u|^p   & u\in  W^{1,p}(\Omega),\\
 + \infty   & L^2(\Omega) \setminus W^{1,p}(\Omega),
\end{array} \right. \quad \cE_\infty = \left\{ 
\begin{array}{ll}
  \int_\Omega \frac 1p |\nabla u|^p   & u\in  W^{1,p}_0(\Omega),\\
 + \infty   & L^2(\Omega) \setminus W^{1,p}_0(\Omega).
\end{array} \right. \]
It is well known that \eqref{r-Neu} and \eqref{r-Dir} admit weak formulations as gradient flows of $\cE_0$ and $\cE_\infty$, respectively. We further denote $L_\tau  (u^\tau, v^\tau) = u^\tau$ for $(u^\tau, v^\tau)$ in $X_\tau$. We first prove 

\begin{theorem} \label{thm:pflow_tau_conv}
    $S_\tau$ uniformly converges to $S_0$ along $L_\tau$ as $\tau \to 0^+$. 
\end{theorem}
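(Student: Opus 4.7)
The plan is to apply Theorem~\ref{thm:Mosco} to reduce the statement to a verification of hypotheses \ref{H1}--\ref{H4} for the family $L_\tau \colon X_\tau \to X_0$ as $\tau \to 0^+$.

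Hypothesis \ref{H1} is immediate from the definition of the scalar products, since
\[\|L_\tau(u,v)\|_{X_0}^2 = \|u\|_{L^2(\Omega)}^2 \leq \|u\|_{L^2(\Omega)}^2 + \tau \|v\|_{L^2(\partial \Omega)}^2 = \|(u,v)\|_{X_\tau}^2\]
for every $(u,v) \in X_\tau$ and $\tau >0$, so $\|L_\tau\|_{B(X_\tau, X_0)} \le 1$. For \ref{H2}, given $w \in X_0 = L^2(\Omega)$, the choice $w^\tau := (w, 0) \in X_\tau$ satisfies $L_\tau w^\tau = w$ trivially, yielding strong convergence along $L_\tau$.

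For \ref{H3}, suppose $(u^\tau, v^\tau)$ is asymptotically bounded in $X_\tau$ with $u^\tau \rightharpoonup w$ in $L^2(\Omega)$. If the lower limit of the energies is infinite, there is nothing to prove; otherwise, along a subsequence $\tfrac{1}{p}\int_\Omega |\nabla u^\tau|^p$ is bounded and $\gamma u^\tau = v^\tau$. By reflexivity of $W^{1,p}(\Omega)$ and uniqueness of weak $L^2$ limits, one extracts a further subsequence with $u^\tau \rightharpoonup w$ in $W^{1,p}(\Omega)$, whence $w \in W^{1,p}(\Omega)$; the desired inequality then follows from lower semicontinuity of the $L^p$-norm of the gradient.

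The one step that requires genuine care is \ref{H4}. Given $w \in D(\cE_0) = W^{1,p}(\Omega) \cap L^2(\Omega)$, the natural candidate is $w^\tau := (w, \gamma w)$, which works immediately provided $\gamma w \in L^2(\partial\Omega)$ (automatic for $p \geq 2n/(n+1)$ by the trace theorem combined with Sobolev embedding on $\partial \Omega$, and trivially true e.g.\ if $w$ is smooth up to the boundary). The main obstacle is the general case, which I would handle by truncation: for $k \in \bN$, let $w_k := T_k \circ w$, where $T_k(s) := \max(-k, \min(s, k))$. Then $w_k \in W^{1,p}(\Omega)$ with $\nabla w_k = \nabla w \,\mathbf{1}_{\{|w| < k\}}$ (pointwise a.e.), $\gamma w_k = T_k \circ \gamma w \in L^\infty(\partial \Omega) \subset L^2(\partial \Omega)$, and by dominated convergence $w_k \to w$ in $L^2(\Omega)$ while $\cE_0(w_k) \to \cE_0(w)$. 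A standard diagonal argument then produces $k(\tau) \to \infty$ slowly enough (for example with $\tau k(\tau)^2 \to 0$) such that $w^\tau := (w_{k(\tau)}, \gamma w_{k(\tau)})$ satisfies all three requirements of \ref{H4}, since $\tau \|\gamma w_{k(\tau)}\|_{L^2(\partial \Omega)}^2 \leq \tau k(\tau)^2 \mathcal{H}^{n-1}(\partial \Omega) \to 0$. The conclusion follows directly from Theorem~\ref{thm:Mosco}.
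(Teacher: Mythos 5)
Your verification of \ref{H1}, \ref{H2}, \ref{H3} matches the paper's argument essentially line for line. The interesting divergence is at \ref{H4}, where you are in fact more careful than the paper: the published proof simply takes the recovery sequence $(u,\gamma u)$, which requires $\gamma u\in L^2(\partial\Omega)$, but this is not automatic for $u\in W^{1,p}(\Omega)\cap L^2(\Omega)$ when $1<p<2n/(n+1)$ (for instance, $u(x)=|x-x_0|^{-\alpha}$ with $x_0\in\partial\Omega$ and $\alpha$ slightly above $1$ is in $W^{1,p}(B(0,1))\cap L^2(B(0,1))$ in $\mathbb{R}^3$ for $p<3/2$, yet its trace fails to be in $L^2(\partial\Omega)$). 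Your truncation argument, taking $w^\tau=(T_{k(\tau)}w,\,T_{k(\tau)}(\gamma w))$ with $k(\tau)\to\infty$ slowly and using $\tau k(\tau)^2\mathcal{H}^{n-1}(\partial\Omega)\to 0$, is exactly the right fix: it uses that trace commutes with the Lipschitz truncation, $\nabla(T_k w)=\nabla w\,\mathbf{1}_{\{|w|<k\}}$, and monotone/dominated convergence for $\|T_k w\|_{L^2}\to\|w\|_{L^2}$ and $\cE_0(T_k w)\to\cE_0(w)$. Notably, the paper itself uses precisely this truncation device in the analogous $p=1$ statement in the total-variation section, so your proof unifies the two cases and closes a small gap in the $p>1$ argument, covering the full range $p>1$ without additional restrictions on $p$.
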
 
\begin{proof} 
Clearly assumption \ref{H1} holds. As for \ref{H2}, given $u \in X_0$, it is enough to take $(u^\tau, v^\tau) = (u, 0)$. Next, taking into account that 
\[\cE_\tau ((u^\tau, v^\tau)) \geq \cE_0 (u^\tau) =  \cE_0(L_\eps(u^\tau, v^\tau))\]
for any $(u^\tau, v^\tau) \in X_\tau$, hypothesis \ref{H3} follows by lower semicontinuity and convexity of $\cE_0$ as before. It remains to find a recovery sequence for any $u \in D(\cE_0)$, i.e.,  $u \in W^{1,p}(\Omega)$. We can define $(u^\tau, v^\tau) := (u, \gamma\, u)$ for $\tau >0$, whence
\[\|(u^\tau, v^\tau)\|_{X_\eps}^2 = \|u\|_{L^2 (\Omega)}^2 + \tau \|\gamma\, u \|_{L^2(\partial \Omega)}^2 \to \|u\|_{L^2 (\Omega)}^2 \quad \text{as } \tau \to 0^+,\] 
\[L_\tau (u^\tau, v^\tau) = u,\quad \cE_\tau ((u^\tau, v^\tau)) = \cE_0(u) \quad  \text{for } \tau > 0.\]
This shows the required Mosco-convergence of $\cE_\tau$. An application of Theorem \ref{thm:Mosco} concludes the proof. 
\end{proof} 

\begin{theorem} \label{thm:pflow_tau_convD}
    $S_\tau$ uniformly converges to $S_\infty$ along $L_\tau$ as $\tau \to \infty$. 
\end{theorem}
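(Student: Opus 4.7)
The plan is to verify hypotheses \ref{H1}--\ref{H4} for the family $L_\tau \colon X_\tau \to X_\infty$, $L_\tau(u^\tau, v^\tau) = u^\tau$, in the limit $\tau \to \infty$, and then invoke Theorem~\ref{thm:Mosco} directly. Conditions \ref{H1} and \ref{H2} are immediate from the structure of the norm on $X_\tau$: the estimate
\[
\|L_\tau(u^\tau, v^\tau)\|_{X_\infty}^2 = \|u^\tau\|_{L^2(\Omega)}^2 \le \|u^\tau\|_{L^2(\Omega)}^2 + \tau \|v^\tau\|_{L^2(\partial\Omega)}^2 = \|(u^\tau, v^\tau)\|_{X_\tau}^2
\]
gives $\|L_\tau\|_{B(X_\tau, X_\infty)} \le 1$ for every $\tau>0$, and given $u \in X_\infty$, the constant-in-$\tau$ sequence $(u^\tau, v^\tau) := (u, 0)$ satisfies $L_\tau(u^\tau, v^\tau) = u$, verifying asymptotic surjectivity.

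For the recovery sequence \ref{H4}, the choice $(u^\tau, v^\tau) := (u, 0)$ again works: since $u \in D(\cE_\infty) \subset W^{1,p}_0(\Omega)$ satisfies $\gamma u = 0$, we have $(u,0) \in D(\cE_\tau)$ with $\cE_\tau(u,0) = \tfrac{1}{p}\int_\Omega |\nabla u|^p = \cE_\infty(u)$, and moreover $\|(u, 0)\|_{X_\tau}^2 = \|u\|_{L^2(\Omega)}^2$ for all $\tau>0$, so strong convergence along $L_\tau$ (together with equality of energies) holds trivially.

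The main content is therefore \ref{H3}. Suppose $(u^\tau, v^\tau)$ is asymptotically bounded in $X_\tau$, $u^\tau \rightharpoonup u$ in $L^2(\Omega)$, and $\liminf_{\tau \to \infty} \cE_\tau(u^\tau, v^\tau) < \infty$ (otherwise there is nothing to prove). Passing to a subsequence realising the $\liminf$, $u^\tau$ is bounded in $W^{1,p}(\Omega)$ with $\gamma u^\tau = v^\tau$, so (on a further subsequence) $u^\tau$ converges to $u$ weakly in $W^{1,p}(\Omega)$. By weak lower semicontinuity of the $L^p$-norm of the gradient,
\[
\liminf_{\tau \to \infty} \cE_\tau(u^\tau, v^\tau) = \liminf_{\tau \to \infty} \tfrac{1}{p} \int_\Omega |\nabla u^\tau|^p \ge \tfrac{1}{p}\int_\Omega |\nabla u|^p.
\]
The key step is to show $u \in W^{1,p}_0(\Omega)$, i.e.\ $\gamma u = 0$. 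Here the crucial observation is that asymptotic boundedness of the $X_\tau$-norm forces
\[
\|v^\tau\|_{L^2(\partial \Omega)}^2 \le \tau^{-1} \|(u^\tau, v^\tau)\|_{X_\tau}^2 \longrightarrow 0 \quad \text{as } \tau \to \infty,
\]
hence $v^\tau \to 0$ in $L^2(\partial\Omega)$. On the other hand, by continuity of the trace $\gamma \colon W^{1,p}(\Omega) \to L^p(\partial\Omega)$ under weak convergence, $v^\tau = \gamma u^\tau \rightharpoonup \gamma u$ weakly in $L^p(\partial \Omega)$. Comparing these two limits in the space of distributions on $\partial \Omega$ yields $\gamma u = 0$, so $u \in W^{1,p}_0(\Omega)$ and the right-hand side above equals $\cE_\infty(u)$. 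I expect this reconciliation of the weak $L^p$ and strong $L^2$ limits on $\partial\Omega$ to be the only mildly delicate point; the rest is a direct application of Theorem~\ref{thm:Mosco}.
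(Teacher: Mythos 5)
Your proposal is correct and follows essentially the same route as the paper: the same norm bound for \ref{H1}, the same constant recovery sequence $(u,0)$ for \ref{H2} and \ref{H4} (using $\gamma u=0$), and for \ref{H3} the same two observations — that $\tau\|v^\tau\|_{L^2(\partial\Omega)}^2$ bounded forces $v^\tau\to 0$ in $L^2(\partial\Omega)$, while $\gamma u^\tau=v^\tau\rightharpoonup\gamma u$ in $L^p(\partial\Omega)$ along the subsequence with bounded energy — giving $\gamma u=0$ and then lower semicontinuity of the gradient energy. No meaningful difference from the paper's argument.
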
 

\begin{proof} 
Again, hypothesis \ref{H1} is obvious and, for any $u \in X_\infty$, we have $L_\tau (u,0) = u$, so \ref{H2} holds. Let us move on to Mosco-convergence of $\cE_\tau$. Let  $(u^\tau, v^\tau)$ be a sequence converging weakly to $u$ along $L_\tau$. By definition, we have 
\[ \limsup_{\tau \to \infty} \|u^\tau\|_{L^2(\Omega)}^2 + \tau\|v^\tau\|_{L^2(\partial \Omega)}^2 < \infty.\]
In particular, $v^\tau \to 0$ in $L^2(\partial \Omega)$ as $\tau \to \infty$. Suppose that 
\begin{equation} \label{pflow_tau_as_bd} \limsup_{k \to \infty} \cE_\tau((u^{\tau_k} ,v^{\tau_k})) < \infty
\end{equation} 
for a subsequence $t_k \to \infty$. Then $(u^{\tau_k})$ is asymptotically bounded in $W^{1,p}(\Omega)$ and $u^{\tau_k} \rightharpoonup u$ in $W^{1,p}(\Omega)$. Thus, $\gamma u^{\tau_k} \rightharpoonup \gamma u$ in $L^p(\partial \Omega)$. On the other hand, also by \eqref{pflow_tau_as_bd}, $\gamma u^{\tau_k} = v^{\tau_k}$ for large $k$. Since we already observed that $v^\tau \to 0$, we have $\gamma u =0 $, i.e., $u \in W^{1,p}_0(\Omega)$. From this and lower semicontinuity of the $L^p$ norm, we deduce that \ref{H3} holds. 

Finally, in order to show \ref{H4}, we take $u \in L^2(\Omega) \cap W^{1,p}_0(\Omega)$. For $\tau>0$ we take $(u^\tau, v^\tau):= (u, \gamma u) = (u,0)$. It is easy to check that it is a good recovery sequence. 

We conclude by Theorem \ref{thm:Mosco} as usual. 
\end{proof}

\section{Total variation flow with dynamic boundary conditions}\label{S-TV}

\subsection{Dynamic boundary conditions as a limit of boundary layer problems} \label{sS-TV-lay-dbc} 

In the case $p=1$, the proper notion of solution to systems \eqref{r-dbc-plap} and \eqref{r-bry-plap} is more involved. Functionals $\cE_0$ and $\cE_\eps$ given by \eqref{pheat_e0} and \eqref{pheat_ee} are not lower semicontinuous. Instead, we take their lower semicontinuous envelopes, given by 
\begin{equation} \label{tvf_e0}
\cE_0(u, v)=
\left\{
\begin{array}{ll}
 |Du|(\Omega) + \int_{\partial \Omega} |\gamma\, u - v| & \text{if }u\in  BV(\Omega),\\
 +\infty & \text{otherwise,}
\end{array}
\right.
\end{equation} 
where $\gamma \colon  BV(\Omega)\to  L^1(\partial\Omega)$ is the trace operator, and 
\begin{equation} \label{tvf_ee}
\cE_\eps(u) = \left\{ 
\begin{array}{ll}
  |D u|(\Omega)   & u\in  BV(\Omega),\\
 + \infty   & \text{otherwise.}
\end{array}
\right. 
\end{equation} 
The functionals are defined on the spaces $X_0$, $X_\eps$ respectively, which are defined as in the previous section: $X_0=L^2(\Omega)\times L^2(\partial \Omega)$ with the standard inner product, $X_\eps = L^2(\Omega)$, $(w,v)_{X_{\eps}} = \int_\Omega b_\eps wv$. Also as in the previous section, we assume that $\Omega$ is a bounded domain with $C^2$ boundary. The subdifferential of $\cE_0$ was characterized in \cite[Theorem 5.1]{gnrs}, leading to the following description of its gradient flow $S_0$:
\begin{equation} \label{tvfdbcbulk}
u_t = \dv z \quad \text{in } \Omega_T, 
\end{equation} 
\begin{equation} \label{tvfdbcbdry}  
f_t = - z \cdot \nu^\Omega \quad \text{in } \partial \Omega_T 
\end{equation} 
with $z \in L^\infty(\Omega_T)$ satisfying $\dv z \in L^2(\Omega_T)$ and 
\begin{equation}\label{tvfdbcz1} 
|z|\leq 1 \quad \text{in } \Omega_T, 
\end{equation} 
\begin{equation}\label{tvfdbczanz}
(z, D u) = |D u|,  
\end{equation} 
\begin{equation}\label{tvfdbczbdry} 
z \cdot \nu^\Omega \in \sgn (f-\gamma u) \quad \text{on } \partial \Omega_T. 
\end{equation}
The symbol $(z, Du)$ appearing in \eqref{tvfdbczanz} denotes the Anzellotti pairing, see \cite{anzellotti}. 

We want to justify that the gradient flow $S_0$ arises as the limit of gradient flows $S_\eps$ of $\cE_\eps$, corresponding to the system
\begin{equation} \label{tvfblbulk}
b_\eps u^\eps_t = \dv z^\eps \quad \text{in } \Omega_T, 
\end{equation} 
\begin{equation} \label{tvfblbdry}  
z^\eps \cdot \nu^\Omega =0\quad \text{in } \partial \Omega_T 
\end{equation} 
with $z^\eps \in L^\infty(\Omega_T)$ satisfying $\dv z^\eps \in L^2(\Omega_T)$ and 
\begin{equation}\label{tvfblz1} 
|z^\eps|\leq 1 \quad \text{in } \Omega_T, 
\end{equation} 
\begin{equation}\label{tvfblzanz}
(z^\eps, D u^\eps) = |D u^\eps|.  
\end{equation} 
In \eqref{tvfblbulk}, $b_\eps = \mathbf 1_\Omega + \frac{1}{\eps} \mathbf 1_{\Omega\setminus \Omega_\eps}$, where $\Omega_\eps = \{x\in \Omega: \dist(x, \partial\Omega)> \eps\}$.
%\PR{[HOW ABOUT $b_\eps = \mathbf 1_\Omega + \phi(\eps) \mathbf 1_{\Omega\setminus \Omega_\eps}$, where $\phi(\eps)\to \infty$ as $\eps\to 0$?]}

We recall operators $m_\eps \colon X_\eps \to L^2(\partial \Omega)$, 
$L_\eps \colon X_\eps \to X_0$ defined by \eqref{def_m_L}. Convergence of $X_\eps$ to $X_0$ along $L_\eps$ has already been shown in Lemma \ref{lem:pheatX}. In order to apply Theorem \ref{thm:Mosco}, it remains to prove 

\begin{lemma} \label{lem:tvfE}
$\cE_\eps$ Mosco-converges to $\cE_0$ along $L_\eps$.
\end{lemma}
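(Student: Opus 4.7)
The plan is to establish (H3) and (H4) separately, handling (H4) by constructing an explicit recovery sequence for smooth data and extending to general $(u,v) \in D(\cE_0)$ via a diagonal argument, and reducing (H3) to a slicing estimate in the boundary layer combined with weak-$*$ $BV$ compactness.

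\textbf{Recovery sequence (H4).} First I would approximate $(u,v) \in D(\cE_0)$ by pairs $(u_n, v_n)$ with $u_n \in C^\infty(\overline\Omega)$, $v_n \in C^\infty(\partial\Omega)$, such that $u_n \to u$ in $L^2(\Omega)$, $v_n \to v$ in $L^2(\partial\Omega)$, $|Du_n|(\Omega) \to |Du|(\Omega)$, and $\gamma u_n \to \gamma u$ in $L^1(\partial\Omega)$. On the $C^2$ domain $\Omega$ this is standard (mollification with cut-off for the $L^2$ part, $BV$-strict density from \cite{giusti} combined with $L^1$-continuity of the trace under strict convergence for the rest). Then $|\gamma u_n - v_n| \to |\gamma u - v|$ in $L^1(\partial\Omega)$ gives $\cE_0(u_n, v_n) \to \cE_0(u, v)$, so by a diagonal argument it suffices to exhibit a recovery sequence for smooth data. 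For such $(u,v)$, set
\[ w^\eps(x) := \begin{cases} u(x), & x \in \overline{\Omega^\eps},\\ v(\pi(x)), & x \in \Omega \setminus \overline{\Omega^\eps},\end{cases}\]
where $\pi$ is the nearest-point projection to $\partial\Omega$, well-defined on the layer by \eqref{pi_phi}. By that same formula $m_\eps w^\eps \equiv v$, and using $K(\cdot,0) \equiv 1$ from \eqref{r-J} together with a change of variables one checks $w^\eps \to u$ in $L^2(\Omega)$ and $\|w^\eps\|_{X_\eps}^2 \to \|(u,v)\|_{X_0}^2$. The energy splits as
\[ \cE_\eps(w^\eps) = |Du|(\Omega^\eps) + |D(v \circ \pi)|(\Omega \setminus \overline{\Omega^\eps}) + \int_{\partial \Omega^\eps} |u - v \circ \pi|\, d\cH^{n-1},\]
whose three terms converge respectively to $|Du|(\Omega)$, to $0$ (since $|\nabla(v \circ \pi)| \in L^\infty$ and $|\Omega \setminus \Omega^\eps| = O(\eps)$), and to $\int_{\partial \Omega} |u - v|\, d\cH^{n-1}$.

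\textbf{Liminf inequality (H3).} Consider asymptotically bounded $w^\eps$ with $L_\eps w^\eps \rightharpoonup (u, v)$ in $X_0$, and assume $\liminf \cE_\eps(w^\eps) < \infty$. Along a subsequence $w^\eps$ is bounded in $BV(\Omega)$; by compactness, $w^\eps \to u$ strongly in $L^1(\Omega)$ and weakly-$*$ in $BV(\Omega)$. The divergence theorem then yields $\int_{\partial \Omega} \phi\, \gamma w^\eps \to \int_{\partial \Omega} \phi\, \gamma u$ for every $\phi \in C(\partial \Omega)$. Splitting
\[ \cE_\eps(w^\eps) \geq |Dw^\eps|(\Omega^\eps) + |Dw^\eps|(\Omega \setminus \overline{\Omega^\eps}),\]
weak-$*$ $BV$ lower semicontinuity on each $\Omega^{\eps'}$ followed by $\eps' \to 0^+$ gives $\liminf |Dw^\eps|(\Omega^\eps) \geq |Du|(\Omega)$. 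For the boundary-layer term, write $W^\eps := w^\eps \circ \Phi$; the one-dimensional $BV$ estimate $|W^\eps(x', s) - W^\eps(x', 0^+)| \leq |D_s W^\eps(x', \cdot)|((0, s))$ integrated in $s$ and then in $x'$, combined with $BV$ slicing and the change of variables $\Phi$ (whose Jacobian tends to $1$ uniformly on the layer), produces
\[ \int_{\partial \Omega} |m_\eps w^\eps - \gamma w^\eps|\, d\cH^{n-1} \leq (1+o(1))\, |Dw^\eps|(\Omega \setminus \overline{\Omega^\eps}).\]
Using $m_\eps w^\eps \rightharpoonup v$ in $L^2(\partial \Omega)$ and the $C(\partial \Omega)$-weak convergence of $\gamma w^\eps$, the variational characterization $\|f\|_{L^1} = \sup_{\|\phi\|_\infty \leq 1} \int \phi f$ yields $\int_{\partial \Omega} |v - \gamma u|\, d\cH^{n-1} \leq \liminf |Dw^\eps|(\Omega \setminus \overline{\Omega^\eps})$. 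Summing the interior and layer bounds gives (H3).

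\textbf{Main obstacle.} The technical core is the slicing-plus-change-of-variables bound: one must control the trace-average deficit $\|m_\eps w^\eps - \gamma w^\eps\|_{L^1(\partial \Omega)}$ by the normal component of $|Dw^\eps|$ restricted to the boundary layer, with a Jacobian factor $1 + O(\eps)$. A closely related subtlety is that $\gamma w^\eps$ need \emph{not} converge in $L^1(\partial \Omega)$ under weak-$*$ $BV$ convergence; only its pairing against $C(\partial \Omega)$ passes to the limit, but this is exactly enough because $|\cdot|$ admits a dual characterization via continuous test functions.
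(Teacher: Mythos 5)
Your recovery-sequence argument for \ref{H4} is correct and takes a slightly different route from the paper's: you regularize both components of the pair $(u,v)$ and then diagonalize, whereas the paper keeps $w_1 \in BV(\Omega)$ fixed, regularizes only the boundary datum $w_2$, and invokes \cite[Corollary 3.89]{AFP} to decompose the total variation of the glued function. Both work; the paper's version avoids one layer of diagonalization.

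However, your \ref{H3} argument contains a genuine gap. You claim that weak-$*$ $BV$ convergence of $w^\eps$ to $u$ together with the divergence theorem forces $\int_{\partial\Omega}\phi\,\gamma w^\eps \to \int_{\partial\Omega}\phi\,\gamma u$ for every $\phi\in C(\partial\Omega)$. This is false. Choosing a $C^1$ extension $\Phi$ of $\phi\,\nu$ to $\overline\Omega$, the divergence theorem turns the boundary pairing into $\int_\Omega (\div\Phi)\, w^\eps + \int_\Omega \Phi\cdot \dd Dw^\eps$; the first integral converges, but the second need not, because $Dw^\eps \rightharpoonup Du$ only against $C_0(\Omega;\bR^n)$, whereas $\Phi$ does not vanish on $\partial\Omega$, so derivative mass may escape to the boundary. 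Concretely, take $\Omega = \,]0,1[$ and $w^\eps = \mathbf 1_{]\eps,1[}$. Then $w^\eps \to 1 =: u$ in $L^1$ and $Dw^\eps$ (the unit Dirac mass at $\eps$) tends to the Dirac mass at $0$, not to $Du=0$; one has $\gamma w^\eps(0) = 0 \neq 1 = \gamma u(0)$. Moreover $m_\eps w^\eps(0) = 0 = \gamma w^\eps(0)$ and $v := \lim m_\eps w^\eps(0) = 0$, so your estimate $\int_{\partial\Omega}|m_\eps w^\eps - \gamma w^\eps| \leq (1+o(1))|Dw^\eps|(\Omega\setminus\overline{\Omega^\eps})$ reads $0\leq 1$ and gives no access to the needed bound $\int_{\partial\Omega}|v-\gamma u| = 1 \leq \liminf|Dw^\eps|(\Omega\setminus\overline{\Omega^\eps})$. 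The missing idea is what the paper supplies: compare $m_\eps w^\eps$ not with the unstable trace $\gamma w^\eps$ but with a \emph{fixed-depth} bulk average $m_{\delta,2\delta} w^\eps$. Since $m_{\delta,2\delta}\colon L^2(\Omega)\to L^2(\partial\Omega)$ is a bounded operator, $m_{\delta,2\delta}w^\eps$ converges weakly in $L^2(\partial\Omega)$ under the hypotheses of \ref{H3} with no trace-continuity assumption; one lets $\eps\to 0^+$ first at fixed $\delta$, and only afterwards sends $\delta\to 0^+$ using $m_{\delta,2\delta}w_1\to\gamma w_1$ in $L^1(\partial\Omega)$ for the \emph{fixed} $BV$ function $w_1$. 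Your single-parameter argument cannot reproduce this two-parameter order of limits.
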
 

\begin{proof} 
Let $(w^\eps) \subset X_\eps$  be such that 
$\limsup \|w^\eps\|_{X_\eps} < \infty$ and $L_\eps w^\eps \rightharpoonup (w_1,w_2)$ in $X_0$. Fix $\delta>0$ small enough.  In order to derive a necessary estimate, we first assume that $w^\eps \in C^1(\Omega)$. For $0<a<\eps<\delta<b<2\delta$, we estimate
\begin{multline} \label{r-trace}
\int_{\partial\Omega} | w^\eps(\Phi(x', b)) - w^\eps(\Phi(x', a)) |\,\dd\cH^{n-1}(x') = 
\int_{\partial\Omega} \left| \int_{a}^{b} \frac\partial{\partial s} w^\eps(\Phi(x',s))\,\dd s \right|\,\dd\cH^{n-1}(x')\\
\le \int_{\partial\Omega}\int_a^b | \nabla w^\eps(\Phi(x', s))|\,
\dd s\, \dd\cH^{n-1}(x')
\le \int_{\Omega \setminus \Omega^{2 \delta}} |\nabla w^\eps(x)| J^{-1}(x)\, \dd\cL^n(x).
\end{multline}
In the first inequality we used $|\frac\partial{\partial x_n} \Phi| = |\nu|= 1$, in the second inequality we changed the variables. 

Next, for $\delta>0$ small enough, $w \in L^1(\Omega)$, and $x' \in \partial \Omega$ we denote 
\[ m_{\delta, 2 \delta}\, w (x') = \int_\delta^{2 \delta} w(\Phi(x', b)) \,\dd b. \]
Applying the triangle inequality
%Young's inequality for the convex function $|\cdot|$ and probability measure $\cL^2 \LC %\mres ]\delta, 2 \delta[ \times ]0, \eps[/(\delta\, \eps)$ 
and \eqref{r-trace}, we get 
\begin{align} 
\nonumber \int_{\partial \Omega}& |m_{\delta, 2 \delta}\, w^\eps(x') -  m_\eps\, w^\eps(x')| \,\dd\cH^{n-1}(x')
\\ \nonumber & = \int_{\partial \Omega}  \left|\frac{1}{\delta} \int_\delta^{2 \delta} \frac{1}{\eps}\int_0^\eps  w^\eps(\Phi(x',b)) - w^\eps(\Phi(x', a)) \,\dd a \, \dd b \right| \dd\cH^{n-1}(x')\\
\label{m_est_calc} & \leq \frac{1}{\delta} \int_\delta^{2 \delta} \frac{1}{\eps}\int_0^\eps  \int_{\partial \Omega}\left| w^\eps(\Phi(x', b)) - w^\eps(\Phi(x', a))\right|\dd\cH^{n-1}(x')\, \dd a\, \dd b \\ \nonumber &
\le \frac{1}{\delta} \int_\delta^{2 \delta} \frac{1}{\eps}\int_0^\eps  \int_{\Omega \setminus \Omega^{2 \delta}} |\nabla w^\eps(x)| J^{-1}(x)\, \dd\cL^n(x) \, \dd a\, \dd b \\ \nonumber &= \int_{\Omega \setminus \Omega^{2 \delta}} |\nabla w^\eps(x)| J^{-1}(x)\, \dd\cL^n(x) \leq \max_{\overline{\Omega} \setminus \Omega^{2 \delta}} J^{-1}\ \int_{\Omega \setminus \Omega^{2 \delta}} |\nabla w^\eps|\, \dd\cL^n.
\end{align}
Now, for any $w^\eps \in BV(\Omega)$ we can take a sequence of smooth functions $(w^{\eps}_k)$ that approximates $w^\eps$ in the strict sense, see \cite[Remark 3.22]{AFP}. Then 
\begin{multline*} \limsup_{k \to \infty} \int_{\Omega \setminus \Omega^{2 \delta}} |\nabla w^\eps_k|\, \dd\cL^n = \lim_{k \to \infty} \int_{\Omega} |\nabla w^\eps_k|\, \dd\cL^n - \liminf_{k \to \infty} \int_{\Omega^{2 \delta}} |\nabla w^\eps_k|\, \dd\cL^n \\ \leq |D w^\eps|(\Omega) - |D w^\eps|(\Omega^{2 \delta}) = |D w^\eps|(\Omega \setminus \Omega^{2 \delta}).
\end{multline*}
Thus, passing to the limit $k \to \infty$ in \eqref{m_est_calc} with $w^\eps_k$ in place of $w^\eps$, we obtain
\begin{equation} \label{m_est}
\int_{\partial \Omega} |m_{\delta, 2 \delta}\, w^\eps(x') -  m_\eps\, w^\eps(x')| \,\dd\cH^{n-1}(x') \leq \max_{\overline{\Omega} \setminus \Omega^{2 \delta}} J^{-1}\ |D w^\eps|(\Omega \setminus \Omega^{2 \delta})
\end{equation} 
for $\eps \in ]0, \delta[$. Therefore, since $\max_{\overline{\Omega} \setminus \Omega^{2 \delta}} J^{-1} = \big(\min_{\overline{\Omega} \setminus \Omega^{2 \delta}} J\big)^{-1}$, we have for any $w^\eps \in D(\cE_\eps)$
\begin{equation}
\cE_\eps(w^\eps) \geq  |D w^\eps|(\Omega^{2\delta}) + 
\min_{\overline{\Omega} \setminus \Omega^{2 \delta}} J %\mbox{\sout{$(\max\{ J^{-1}(x): x\in \Omega\setminus\Omega^{2\delta}\})^{-1}$}}
\int_{\partial \Omega} |m_{\delta, 2 \delta}\, w^\eps -  m_\eps\, w^\eps|. 
\end{equation}

By the weak convergence $(w^\eps, m_\eps\, w^\eps) = L_\eps w^\eps \rightharpoonup (w_1, w_2)$ in $X_0$, convexity and lower semicontinuity of the $BV$ and $L^1$ norms we obtain 
\begin{equation} \label{liminf_tv_bl}
\liminf_{\eps\to 0^+}\cE_\eps(w^\eps) \geq |D w_1|(\Omega^{2\delta}) + \min_{\overline{\Omega} \setminus \Omega^{2 \delta}} J \ \int_{\partial \Omega} |m_{\delta, 2 \delta}\, w_1 -  w_2|.
\end{equation}
Here, we also used that, by continuity of the operator $m_{\delta,2 \delta}\colon L^2(\Omega) \to L^2(\partial \Omega)$, if $w^\eps \rightharpoonup w_1 $ in $L^2(\Omega)$, then $m_{\delta,2 \delta}\, w^\eps \rightharpoonup m_{\delta,2 \delta}\, w_1$ in $L^2(\partial\Omega)$.

It remains to pass to the limit $\delta \to 0^+$. We observe that $m_{\delta, 2 \delta}\, w_1 \to \gamma\, w_1$ in $L^1(\partial \Omega)$. This can be ascertained by inequality   
\begin{equation*} 
\int_{\partial \Omega} |m_{\delta, 2 \delta}\, w_1(x') -  \gamma\, w_1(x')| \,\dd\cH^{n-1}(x') \leq \max_{\overline{\Omega} \setminus \Omega^{2 \delta}} J^{-1}\, |D w_1(x)|(\Omega \setminus \Omega^{2 \delta}), 
\end{equation*} 
which can be proved in a manner analogous to \eqref{m_est}, see also \eqref{mtrace_est}. (We note that due to the smoothness of $\partial \Omega$, we can take a strict approximation of $w^\eps$ smooth up to the boundary.) 
Taking into account that, by (\ref{r-J}), $\lim_{\delta\to 0^+} \min_{\overline{\Omega} \setminus \Omega^{2 \delta}} J=1$, we recover from \eqref{liminf_tv_bl} the assertion of \ref{H3}. 

Finally, we check \ref{H4} for a given $w = (w_1, w_2) \in X_0$ with $w_1 \in BV(\Omega)$. Let $(\widetilde{w}_2^k)$ be a sequence of smooth functions on $\partial \Omega$ converging to $w_2$ in $L^2$. For small enough $\eps >0$, we set 
\[ w^\eps =  \mathbf{1}_{\Omega^\eps}\, w_1 + \mathbf{1}_{\Omega \setminus \Omega^\eps}\, \widetilde{w}_2^{k(\eps)} \circ \pi, \]
where $\pi$ is the nearest point projection onto $\partial \Omega$ and $k(\eps)$ is a function satisfying $k(\eps) \to \infty$ as $\eps \to 0^+$, that will be chosen later. By \cite[Corollary 3.89]{AFP}, $w^\eps \in BV(\Omega)$ and 
\[ |D w^\eps|(\Omega) = |Dw_1|(\Omega^\eps) + \int_{\partial \Omega^\eps} |\gamma^{\Omega^\eps} w_1 - \widetilde{w}_2^{k(\eps)} \circ \pi|\,\dd \cH^{n-1} + \int_{\Omega \setminus \Omega^\eps} |\nabla (\widetilde{w}_2^{k(\eps)} \circ \pi)|\, \dd \cL^n =: I_1^\eps + I_2^\eps + I_3^\eps. \]
We estimate, using a change of variables and appealing to smoothness of $\Omega$,  
\begin{multline*} I_3^\eps \leq  \int_{\Omega \setminus \Omega^\eps} |\nabla \widetilde{w}_2^{k(\eps)}(\pi(x))| \, | D \pi(x)|\, \dd x \\ =   \int_0^\eps \int_{\partial \Omega} |\nabla \widetilde{w}_2^{k(\eps)}(x')| \, | D \pi(\Phi(x',x_n))|\, K(x', x_n)\dd \cH^{n-1}(x') \dd x_n \leq C \eps \int_{\partial \Omega} |\nabla \widetilde{w}_2^{k(\eps)}|\dd \cH^{n-1}.
\end{multline*} 
Thus, choosing $k(\eps)$ converging to $\infty$ slowly enough, we get $I_3^\eps \to 0$ as $\eps \to 0^+$. 

Next, by the change of variables $y' = \Phi_\eps(x') := \Phi(x', \eps)$, appealing to \eqref{pi_phi}, we see that
\[I_2^\eps = \int_{\partial \Omega} |\gamma^{\Omega} w_1 \circ \Phi_\eps - \widetilde{w}_2^{k(\eps)} |\,J_\eps^{-1}\dd \cH^{n-1},\]
where $J_\eps^{-1}:= |\det D(\Phi_\eps^{-1})|$. Since $\gamma^{\Omega} w_1 \circ \Phi_\eps \to \gamma^{\Omega} w_1$ in $L^1(\partial \Omega)$ (this again follows from an inequality similar to \eqref{m_est}) and $J_\eps^{-1} \to 1$ uniformly as $\eps \to 0^+$, we have 
\[ \lim_{\eps \to 0^+} I_2^\eps = %\lim_{\eps \to 0^+}
\int_{\partial \Omega} |\gamma^{\Omega} w_1 - w_2 |\dd \cH^{n-1}. \]
Finally, the convergence $I_1^\eps \to |Dw_1|(\Omega)$ is clear. Summing up, we have 
\[\cE_\eps(w^\eps) = |Dw^\eps|(\Omega) \to |Dw_1|(\Omega) + \int_{\partial \Omega} |\gamma^{\Omega} w_1 - w_2 |\dd \cH^{n-1} = \cE_0(w_1,w_2)\]
as $\eps \to 0^+$, which concludes the proof of \ref{H4}.
\end{proof} 

\begin{theorem} \label{thm7.1}
    $S_\eps$ converges uniformly to $S_0$ along $L_\eps$. \qed
\end{theorem}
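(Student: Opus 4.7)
The proof should be essentially immediate, since all the heavy lifting has already been done. The plan is to invoke the abstract convergence Theorem \ref{thm:Mosco}. For that, I need to verify the four hypotheses \ref{H1}--\ref{H4} for the data $(X_\eps, X_0, L_\eps, \cE_\eps, \cE_0)$ of this section. But \ref{H1} and \ref{H2} are exactly the statement that $X_\eps$ Mosco-converge to $X_0$ along $L_\eps$; this was established in Lemma \ref{lem:pheatX}, whose proof does not use the particular form of $\cE_\eps$ (it only uses the Hilbert structure and the definition of $L_\eps$ via \eqref{def_m_L}), so it applies verbatim here. Similarly, \ref{H3} and \ref{H4} are established in Lemma \ref{lem:tvfE}.

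With these four properties in hand, Theorem \ref{thm:Mosco} applies directly and yields that $S_\eps$ uniformly converges to $S_0$ along $L_\eps$ in the sense of Definition \ref{con-flo}. In particular, whenever $u_0^\eps \in \overline{D(\cE_\eps)}$ converge strongly to $u_0 \in \overline{D(\cE_0)}$ along $L_\eps$, we get $L_\eps S_\eps u_0^\eps \to S_0 u_0$ in $C([0,T],X_0)$ and $\|S_\eps u_0^\eps\|_{X_\eps} \to \|S_0 u_0\|_{X_0}$ in $C([0,T])$. At the level of the PDE, this translates into convergence of the bulk solutions of (\ref{tvfblbulk}--\ref{tvfblzanz}) to the bulk component of the solution to (\ref{tvfdbcbulk}--\ref{tvfdbczbdry}), together with convergence of the boundary average $m_\eps u^\eps$ to the boundary trace variable $f$.

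Since no step of the argument beyond the invocation of Theorem \ref{thm:Mosco} is required, the proof can legitimately be concluded with \qed, as in Theorems \ref{Tthin}, \ref{thm:pflow_conv}, \ref{thm:pflow_tau_conv}, and \ref{thm:pflow_tau_convD} above. The main conceptual obstacle is not in this final step but rather in Lemma \ref{lem:tvfE}, specifically in constructing the recovery sequence for \ref{H4}: one must approximate a boundary datum $w_2 \in L^2(\partial \Omega)$ (which need not be the trace of any $BV$-function) by a smoothed lift into the thin layer, and carefully control the jump contribution $\int_{\partial \Omega^\eps} |\gamma^{\Omega^\eps} w_1 - \widetilde{w}_2^{k(\eps)} \circ \pi|$ and the tangential gradient term via a diagonal choice $k(\eps) \to \infty$ slowly enough. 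That work has already been carried out, so here nothing further is needed.
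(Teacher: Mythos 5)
Your proof is correct and follows precisely the paper's own argument: the hypotheses \ref{H1}--\ref{H2} come from Lemma \ref{lem:pheatX} (which, as you observe, is independent of the choice of energy functional), \ref{H3}--\ref{H4} come from Lemma \ref{lem:tvfE}, and Theorem \ref{thm:Mosco} then gives the conclusion immediately. Your remark that the real work lies in the recovery-sequence construction of \ref{H4} in Lemma \ref{lem:tvfE} is also an accurate reading of where the technical content sits.
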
 

\subsection{Neumann boundary condition as a limit of dynamic b.\,c.} 

Let $X_\tau$ be the space $L^2(\Omega)\times L^2(\partial \Omega)$ with scalar product 
\[(w,v)_{X_\tau} = \int_\Omega w_1 v_1 + \tau \int_{\partial \Omega} w_2 v_2 . \]
The total variation flow with dynamic b.\,c., $S_\tau$, is the gradient flow  of the functional $\cE_\tau\colon X_\tau \to [0,\infty],$ defined by 
\[\cE_\tau(w_1,w_2)=\int_\Omega |\nabla w_1| + \int_{\partial \Omega} |\gamma w_1- w_2|.\]

We recall that the usual total variation flow with Neumann boundary condition $S_0$ is the gradient flow of $\cE_0 \colon X_0 \to [0, \infty],$ given by 
\[\cE_0(w)=\int_\Omega |D w|,\]
where $X_0 = L^2(\Omega)$. 

Let $L_\tau \colon X_\tau \to X_0$ be given by 
\[L_\tau (w^\tau_1, w^\tau_2) = w^\tau_1 . \]

\begin{theorem} 
$S_\tau$ uniformly
converge to $S_0$ along $L_\tau$ as $\tau \to 0^+$. 
\end{theorem}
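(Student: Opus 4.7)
The plan is to verify the four hypotheses \ref{H1}--\ref{H4} of Theorem \ref{thm:Mosco}, with $\tau \to 0^+$ playing the role of $\eps$. Hypothesis \ref{H1} is immediate, since
\[
\|L_\tau(w_1,w_2)\|_{X_0}^2 = \|w_1\|_{L^2(\Omega)}^2 \leq \|w_1\|_{L^2(\Omega)}^2 + \tau\|w_2\|_{L^2(\partial\Omega)}^2 = \|(w_1,w_2)\|_{X_\tau}^2,
\]
so $\|L_\tau\|_{B(X_\tau,X_0)}\leq 1$ for every $\tau>0$. For \ref{H2}, given $u \in X_0$, take $(u^\tau, v^\tau) := (u, 0)$, so that $L_\tau(u^\tau, v^\tau) = u$ is constant and converges trivially.

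To verify \ref{H3}, suppose that $(w_1^\tau, w_2^\tau)$ is asymptotically bounded in $X_\tau$ and $w_1^\tau \rightharpoonup w$ in $L^2(\Omega)$. Without loss of generality $\liminf_{\tau\to 0^+}\cE_\tau(w_1^\tau, w_2^\tau) < \infty$, so along a subsequence $(\tau_k)$ the total variations $|Dw_1^{\tau_k}|(\Omega)$ are bounded. Combined with the $L^1$ boundedness inherited from $L^2$ boundedness, this means $(w_1^{\tau_k})$ is bounded in $BV(\Omega)$, and by compactness of $BV(\Omega) \hookrightarrow L^1(\Omega)$ a further subsequence converges in $L^1(\Omega)$, necessarily to $w$; hence $w \in BV(\Omega)$. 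Then by $L^1$-lower semicontinuity of total variation, and using that the boundary term in $\cE_\tau$ is nonnegative,
\[
\cE_0(w) = |Dw|(\Omega) \leq \liminf_{k\to\infty} |Dw_1^{\tau_k}|(\Omega) \leq \liminf_{k\to\infty} \cE_{\tau_k}(w_1^{\tau_k}, w_2^{\tau_k}).
\]

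For \ref{H4}, given $w \in D(\cE_0) = L^2(\Omega)\cap BV(\Omega)$, the plan is to invoke a smooth strict approximation (by first truncating at level $M$ and then applying the $C^\infty(\overline\Omega)$ strict approximation result, see \cite{giusti} and \cite[Remark 3.22]{AFP}): there exists $(w^k)\subset C^\infty(\overline{\Omega})$ with $w^k \to w$ in $L^2(\Omega)$ and $|Dw^k|(\Omega) = \int_\Omega |\nabla w^k| \to |Dw|(\Omega)$. For each such $k$, the trace $\gamma w^k$ lies in $C(\partial \Omega)\subset L^2(\partial \Omega)$, so $M_k := \|\gamma w^k\|_{L^2(\partial\Omega)}^2 < \infty$, although $M_k$ may diverge. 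I then diagonalize: since each $M_k$ is finite, one can choose $k(\tau)\to\infty$ as $\tau \to 0^+$ slowly enough that $\tau M_{k(\tau)} \to 0$. Setting $(w_1^\tau, w_2^\tau) := (w^{k(\tau)}, \gamma w^{k(\tau)})$ gives $L_\tau(w_1^\tau, w_2^\tau) = w^{k(\tau)} \to w$ in $L^2(\Omega)$, $\|(w_1^\tau, w_2^\tau)\|_{X_\tau}^2 = \|w^{k(\tau)}\|_{L^2(\Omega)}^2 + \tau M_{k(\tau)} \to \|w\|_{X_0}^2$, and
\[
\cE_\tau(w_1^\tau, w_2^\tau) = |Dw^{k(\tau)}|(\Omega) + 0 \to |Dw|(\Omega) = \cE_0(w),
\]
where the boundary integral vanishes identically because $w_2^\tau = \gamma w_1^\tau$. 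The main obstacle is securing the simultaneous $L^2(\Omega)$-strong and $BV(\Omega)$-strict smooth approximation for general $w \in L^2 \cap BV$; for $w \in L^\infty \cap BV$ this follows from classical results and the general case is recovered by truncation, after which the diagonal step is routine. An application of Theorem \ref{thm:Mosco} completes the proof.
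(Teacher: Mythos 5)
Your proof is correct and follows the paper's strategy of verifying \ref{H1}--\ref{H4} and invoking Theorem \ref{thm:Mosco}; the treatment of \ref{H1}, \ref{H2}, and \ref{H3} matches the paper's (the paper simply refers back to Theorem \ref{thm:pflow_tau_conv} for the Mosco convergence of the spaces, and dispatches \ref{H3} by citing lower semicontinuity of total variation, where you spell out the $BV$-compactness step). The one genuine difference is in \ref{H4}. You and the paper share the essential idea: build a recovery pair whose second component equals the boundary trace of the first (so the boundary term in $\cE_\tau$ vanishes identically), and diagonalize a parameter so that $\tau$ times the $L^2(\partial\Omega)$-norm of that trace tends to $0$. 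But where you pass through a smooth $C^\infty(\overline\Omega)$ strict approximation of $w$ — which you yourself flag as requiring an extra truncate-then-diagonalize argument to secure simultaneous $L^2$-strong and $BV$-strict approximation — the paper uses truncation alone: it takes $w^\tau = (T_{k(\tau)}w,\ \gamma\, T_{k(\tau)}w)$, observing that $\gamma T_k w \in L^\infty(\partial\Omega)$ with $\|\gamma T_k w\|_{L^2(\partial\Omega)}^2 \leq k^2 |\partial\Omega|$, so no smoothing is needed, $T_{k}w \to w$ in $L^2(\Omega)$ by dominated convergence, and $|D T_{k}w|(\Omega) \to |Dw|(\Omega)$ by standard $BV$ theory. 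Dropping the smoothing step would streamline your argument and remove the mild gap you acknowledge.
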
 

\begin{proof} 
Mosco convergence of $X_\tau$ as $\tau \to 0^+$ was shown in \ref{thm:pflow_tau_conv}.  We move on to checking the Mosco convergence of $\cE_\tau$ to $\cE_0$ along $L_\tau$. 
The inequality \ref{H3} follows easily from lower semicontinuity of total variation. It remains to check \ref{H4}. We take $w \in X_0$ such that $\cE_0(w) < \infty$. Then the boundary trace $\gamma w \in L^1(\partial \Omega)$ is well defined. Let $k(\tau)$ be such that $k(\tau) \to \infty$ as $\tau \to 0^+$, and let $T_{k(\tau)} w$ denote a truncation of $w$: 
\[T_{k(\tau)} w(x) = \left\{ \begin{array}{ll} k & \text{if } w(x) > k, \\ -k & \text{if } w(x) < -k, \\ w(x) & \text{otherwise.} \end{array}\right. \]
We take $w^\tau = (T_{k(\tau)} w, \gamma\, T_{k(\tau)} w)$. Then,
\[ L_\tau w^\tau = T_{k(\tau)} w \to w \text{ in } X_0\] 
and
\[\|w^\tau\|_{X_\tau}^2 = \|T_{k(\tau)} w\|_{L^2(\Omega)}^2 + \tau \|\gamma \,T_{k(\tau)} w\|_{L^2(\partial \Omega)}^2 \to \|w\|_{L^2(\Omega)}^2,\]
provided that $k(\tau)$ converges to $\infty$ slowly enough so that the second term vanishes in the limit. Finally, 
\[\cE_\tau(w^\tau) = \int_{\Omega} |\nabla T_{k(\tau)} w| + \int_{\partial \Omega} |\gamma \,T_{k(\tau)} w - \gamma \,T_{k(\tau)} w|\to \int_{\Omega} |\nabla w| = \cE_0(w). \]
Now, we invoke Theorem \ref{thm:Mosco} to complete the proof.
\end{proof} 

\subsection{Dirichlet boundary condition as a limit of dynamic b.\,c.} 

Let $X_\tau$, $\cE_\tau$, $S_\tau$ be as in the previous subsection. We recall that the usual total variation flow with zero Dirichlet boundary condition $S_\infty$ is the gradient flow of $\cE_\infty \colon X_\infty \to [0, \infty]$ given by 
\[\cE_\infty(w)=\int_\Omega |Dw| + \int_{\partial \Omega} |\gamma w|,\]
where $X_\infty = L^2(\Omega)$. 

Let $L_\tau \colon X_\tau \to X_\infty$ be given by 
\[L_\tau (w^\tau_1, w^\tau_2) = w^\tau_1 . \]

\begin{theorem} 
$S_\tau$ uniformly %weakly 
converges to $S_\infty$ along $L_\tau$ as $\tau \to \infty$. 
\end{theorem}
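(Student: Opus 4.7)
The plan is to apply Theorem \ref{thm:Mosco}, so I will check the four hypotheses \ref{H1}--\ref{H4}. The first two are immediate and essentially identical to the corresponding step in the proof of Theorem \ref{thm:pflow_tau_convD}: since $\|L_\tau(w_1, w_2)\|_{X_\infty}^2 = \|w_1\|_{L^2(\Omega)}^2 \leq \|(w_1, w_2)\|_{X_\tau}^2$, the operators $L_\tau$ are contractions, so \ref{H1} holds; and for any $w \in X_\infty$, taking $w^\tau := (w, 0) \in X_\tau$ gives $L_\tau w^\tau = w$ identically, yielding \ref{H2}.

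For the recovery sequence \ref{H4}, I would use the same trivial choice $w^\tau = (w, 0)$ for any $w \in D(\cE_\infty) = L^2(\Omega) \cap BV(\Omega)$. This instantly gives $L_\tau w^\tau = w$, $\|w^\tau\|_{X_\tau}^2 = \|w\|_{X_\infty}^2$, and $\cE_\tau(w^\tau) = \int_\Omega |Dw| + \int_{\partial \Omega} |\gamma w| = \cE_\infty(w)$. In contrast with the Neumann-type argument in the previous subsection, no truncation of the trace is required here: picking $w_2^\tau = 0$ altogether avoids the potentially troublesome contribution $\tau \|w_2^\tau\|_{L^2(\partial\Omega)}^2$.

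The only real work lies in the lim-inf condition \ref{H3}. Suppose $w^\tau = (w_1^\tau, w_2^\tau)$ satisfies $\limsup_\tau \|w^\tau\|_{X_\tau}^2 < \infty$ and $w_1^\tau = L_\tau w^\tau \rightharpoonup w$ in $L^2(\Omega)$. The key observation is that the boundedness of $\tau \|w_2^\tau\|_{L^2(\partial\Omega)}^2$ forces $\|w_2^\tau\|_{L^2(\partial\Omega)} = O(\tau^{-1/2})$, hence by Cauchy--Schwarz $\|w_2^\tau\|_{L^1(\partial\Omega)} \to 0$. Assuming as usual that $\liminf_\tau \cE_\tau(w^\tau) < \infty$, one passes to a subsequence along which $(w_1^\tau)$ is bounded in $BV(\Omega)$ and therefore converges to $w$ strongly in $L^1(\Omega)$ by Rellich compactness, so in particular $w \in BV(\Omega)$. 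The reverse triangle inequality then yields
\[
\cE_\tau(w_1^\tau, w_2^\tau) \geq \int_\Omega |Dw_1^\tau| + \int_{\partial \Omega} |\gamma w_1^\tau| - \|w_2^\tau\|_{L^1(\partial\Omega)},
\]
after which I would conclude by invoking the $L^1(\Omega)$-lower semicontinuity of the functional $u \mapsto \int_\Omega |Du| + \int_{\partial \Omega} |\gamma u|$ on $BV(\Omega)$.

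The heart of the argument, and the step I expect to flag as the only genuine point, is precisely this last lower semicontinuity claim. It can be established by the standard device of extending $u \in BV(\Omega)$ by zero to $\tilde u \in BV(V)$ on an open neighborhood $V \supset \overline{\Omega}$, whereby the functional becomes $|D\tilde u|(\overline{\Omega})$; it is then lower semicontinuous under strong $L^1$ convergence because extension by zero is continuous $L^1(\Omega) \to L^1(V)$ and the total variation is $L^1(V)$-lower semicontinuous (cf.\ the manipulation of zero-extensions in the proof of Lemma \ref{lem:tvfE}). With \ref{H3} in hand, Theorem \ref{thm:Mosco} delivers the desired uniform convergence along $L_\tau$ and completes the proof.
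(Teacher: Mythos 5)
Your proof is correct and follows essentially the same route as the paper: the same trivial recovery sequence $w^\tau = (w,0)$ for \ref{H2} and \ref{H4}, and the same reverse triangle inequality
\[
\cE_\tau(w_1^\tau, w_2^\tau) \geq \cE_\infty(w_1^\tau) - \int_{\partial \Omega} |w_2^\tau|
\]
for \ref{H3}, combined with the observation that $\tau\|w_2^\tau\|_{L^2(\partial\Omega)}^2 = O(1)$ forces $\|w_2^\tau\|_{L^1(\partial\Omega)}\to 0$. The only divergence is in how you conclude the lower semicontinuity: you pass to a subsequence converging strongly in $L^1(\Omega)$ via a $BV$-bound and Rellich compactness, and then re-derive $L^1$-lower semicontinuity of $u\mapsto |Du|(\Omega)+\int_{\partial\Omega}|\gamma u|$ by the zero-extension device. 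The paper instead simply notes that $\cE_\infty$ is convex and strongly $L^2$-lower semicontinuous (a fact already implicit in the statement that $S_\infty$ is a well-posed gradient flow), hence weakly $L^2$-lower semicontinuous, and applies this directly to $w_1^\tau\rightharpoonup w$ in $L^2(\Omega)$. The step you flag as the heart of the argument is therefore available as a known consequence of convexity, with no need for compactness or subsequences; your detour is valid but slightly less economical.
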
 

\begin{proof} 
Mosco convergence of $X_\tau$ as $\tau \to \infty$ was shown in the proof of Theorem \ref{thm:pflow_tau_convD}. We move on to checking the Mosco-convergence of $\cE_\tau$ to $\cE_\infty$ along $L_\tau$. We first prove \ref{H3}. Without loss of generality we can assume that $\cE_\tau(w^\tau) < \infty$. Then 
\begin{equation} \label{H3_Dir} \cE_\tau(w^\tau) = \int_\Omega |\nabla w^\tau_1| + \int_{\partial \Omega} |\gamma \, w^\tau_1 - w^\tau_2| \geq \int_\Omega |\nabla w^\tau_1| + \int_{\partial \Omega} |\gamma \, w^\tau_1| - \int_{\partial \Omega} |w^\tau_2| = \cE_\infty(w^\tau_1) - \int_{\partial \Omega} |w^\tau_2|. 
\end{equation} 
We note that the condition $\limsup_{\tau \to \infty} \|w^\tau\|_{X_\tau} < \infty$ implies $w^\tau_2 \to 0$ as $\tau \to \infty$ in $L^2(\partial \Omega)$. Using also the condition $L_\tau w^\tau = w^\tau_1 \rightharpoonup w$ together with weak lower semicontinuity of $\cE_\infty$ (which follows from strong lower semicontinuity and convexity), we easily deduce \ref{H3} from \eqref{H3_Dir}.
 
It remains to check \ref{H4}. Let $w \in X_\infty$ be such that $\cE_\infty(w) < \infty$. We take $w^\tau = (w, 0)$ as the recovery sequence. We have 
\[\cE_\tau(w^\tau) = \int_{\Omega} |\nabla  w| + \int_{\partial \Omega} |\gamma \,w|= \cE_\infty(w) \to \cE_\infty(w). \]
We invoke Theorem \ref{thm:Mosco} to finish the proof.
\end{proof} 

\section{Continuum limit of graph diffusion equations} \label{SgraphPDE}

Here we apply our abstract theory to a discrete-to-continuum limit. We only work with the subsequence of the ordered family $(\eps)$ such that $1/\varepsilon$ is an integer. With this understanding, all the statements of our abstract theory remain valid as written. 
We consider a graph $G_\varepsilon$ arising as a discretization of the flat torus $\mathbb{T}^n:=(\mathbb{R}/\mathbb{Z})^n \simeq [0,1]^n$ with mesh size $\varepsilon$.
That is, $G_\varepsilon=(V_\varepsilon,E_\varepsilon)$ is of the form
\begin{gather*}
    V_\varepsilon := \left(\varepsilon \mathbb{Z}/\mathbb{Z}\right)^n \simeq \{ 0, \varepsilon, 2\varepsilon, \ldots, 1-\varepsilon \}^n 
    \\ 
    E_\varepsilon := \left\{ \{z,\overline{z}\} \subset V_\varepsilon \bigm| \exists_i\ \forall_j\  
    \overline{z}_j - z_j = \pm \varepsilon \delta_{ij} \text{ in } \varepsilon \mathbb{Z}/\mathbb{Z} \right\} 
\end{gather*}
where $z=(z_1,\ldots,z_n)$. In other words, $E_\varepsilon$ is the set of (unordered) pairs of closest neighbors in $V_\epsilon$. 
 The set $V_\varepsilon$ is called the node set of $G_\varepsilon$ while the set $E_\varepsilon$ is called the edge set of $G_\varepsilon$.
 If $\{z,\overline{z}\}\in E_\varepsilon$, we simply write $z\sim\overline{z}$.

We define the $L^2$ inner product of functions $v_1,v_2:V_\varepsilon\to\mathbb{R}$ as
\[
    ( v_1,v_2 )_\varepsilon
    := \sum_{z\in V_\varepsilon} \varepsilon^n v_1(z) v_2(z) .
\]
This gives a Hilbert space structure to the set $X_\varepsilon$ of real functions on $V_\varepsilon$.
 This space is of finite dimension and as a set it coincides with $\mathbb{R}^{1/\eps^n}$.
 %it is independent of $\varepsilon$.
For a fixed $1\leq p<\infty$, we consider a discrete $p$-Dirichlet energy $\mathcal{E}_{\eps} \colon X_\eps \to [0, \infty[$ given by 
\[
    \mathcal{E}_{\eps}(v) := \frac{1}{p} \sum_{\{z, \overline{z}\} \in E_\varepsilon} \varepsilon^n 
    \left( \left|v(z) - v(\overline{z}) \right|/\varepsilon \right)^p 
\]
and its gradient flow $S_\varepsilon$ with respect to the inner product $(\ ,\ )_\varepsilon$. Since in our considerations we treat $p$ as a fixed parameter, we suppress $p$ in our notation.

As an expected continuum limit of $X_\eps$, we set $X_0:=L^2(\mathbb{T}^n)$ equipped with the inner product
\[
    ( w_1,w_2 ) := \int_{\mathbb{T}^n} w_1(x) w_2(x), \quad
    w_1,w_2 \in L^2(\mathbb{T}^n).
\]
The expected Mosco-limit of $\mathcal{E}_\eps$ is the orthotropic $p$-Dirichlet energy $\mathcal{E}_\eps\colon X_0 \to [0, \infty]$ given by
\begin{equation*}
    \mathcal{E}_0(w) := \left \{
	\begin{array}{cl}
        \displaystyle \frac1p \int_{\mathbb{T}^n} |\nabla w|_{\ell^p}^p , & f\in W^{1,p}(\mathbb{T}^n), \\
        \infty, & w\in L^2(\mathbb{T}^n) \backslash W^{1,p} (\mathbb{T}^n) 
	\end{array}
	\right.
\end{equation*}
if $p>1$ and
\begin{equation*}
    \mathcal{E}_{0}(w) = \left \{
	\begin{array}{cl}
        \displaystyle  |D w|_{\ell^1} (\mathbb{T}^n), & w\in BV(\mathbb{T}^n) \\
        \infty, & w\in L^2(\mathbb{T}^n) \backslash BV (\mathbb{T}^n)
	\end{array}
	\right.
\end{equation*}
if $p =1$, where $|z|_{\ell^p}^p:=\sum_{i=1}^n |z_i|^p$.
Since $\mathcal{E}_{0}$ is a lower semicontinuous convex function in $X_0$, there is a unique gradient flow $u_t\in-\partial\mathcal{E}_{0}(u)$ with initial data $u_0\in L^2(\mathbb{T}^n)=\overline{D(\mathcal{E}_{0})}$.
 Let $S_0$ denote this gradient flow. We are interested in the convergence $S_\varepsilon$ to $S_0$ as $\varepsilon\downarrow0$.
 To apply our theory, we need to introduce suitable connecting operators  $L_\varepsilon$. This requires some preparation. 
%
% We fix a triangular decomposition of a cube $(0,1)^n$ such that all vertices of each triangle (a polytope with $n+1$ vertices) consist of vertices of the cube.
% We decompose $\mathbb{T}^n$ by cubes
%\[
%    z + \varepsilon(0,1)^n, \quad
%    z \in V_\varepsilon
%\]
%and decompose $(0,1)^n$ by this fixed triangular decomposition;
% one has to consider modulo periodicity.
% We now define $L_\varepsilon$ for $v\in X_\varepsilon$ such that $L_\varepsilon v$ is a piecewise linear function on this triangular decomposition with the property that 
%\[
%    (L_\varepsilon v)(z) = v(z), \quad
%    z \in V_\varepsilon.
%\]
%This is uniquely determined if the triangular decomposition is fixed.
% \textcolor{red}{[There might be better choice of $L_\varepsilon$ for $n>1$.]}
% (In the case $n=1$, the triangular decomposition of $(0,1)$ is itself.)
% We are able to prove the convergence when $n=1$.

We consider a standard simplicial decomposition of the unit cube.
 Let $\operatorname{Sym}(n)$ be the symmetric group of degree $n$, i.e., the group of permutations of $n$ elements.
 For $\sigma \in \operatorname{Sym}(n)$, we define
\[
    \Sigma_\sigma := \left\{ x=(x_1,\ldots,x_n) \bigm| 0 \leq x_{\sigma(1)} \leq x_{\sigma(2)} \leq \cdots \leq x_{\sigma(n)} \leq 1 \right\}.
\]
By definition, the unit cube $[0,1]^n$ is decomposed as
\[
    [0,1]^n = \bigcup_{\sigma\in\operatorname{Sym}(n)} \Sigma_\sigma.
\]
Since the Lebesgue measure $|\Sigma_\sigma\cap\Sigma_{\sigma'}|=0$ for $\sigma\neq\sigma'$, this decomposition is measure theoretically disjoint.
 Thus
\[
    |\Sigma_\sigma| = \frac{1}{n!},
\]
because the order (the number of elements) of $\operatorname{Sym}(n)$ equals $n!$.
 Each simplex has $n+1$ vertices.
 For example,
\[
    \Sigma_{\mathrm{id}} = \left\{ 0 \leq x_1 \leq x_2 \leq \cdots \leq x_n \leq 1 \right\}
\]
has vertices $\alpha_0 :=(0,\ldots,0)$, $\alpha_1 :=(0,\ldots,0,1)$, $\ldots$, $\alpha_{n-1} :=(0,1,\ldots,1)$, $\alpha_n :=(1,\ldots,1)$. In general, for $\sigma\in\operatorname{Sym}(n)$, $x\in\mathbb{R}^n$, we set $x^\sigma=(x_{\sigma(1)},\ldots,x_{\sigma(n)})$. Then, the vertices of $\Sigma_\sigma$ can be written as $\alpha_0^\sigma$, $\alpha_1^\sigma$, $\ldots$, $\alpha_{n-1}^\sigma$, $\alpha_n^\sigma$. For $k=1, \ldots, n$, $\alpha_k^\sigma-\alpha_{k-1}^\sigma$ equals $e_{\sigma(k)}$, the $\sigma(k)$-th vector of the standard basis of $\mathbb{R}^n$. In particular the vertices are affinely independent and so they form an affine basis. Thus, for any real-valued function $v$ defined on the set of all vertices of $\Sigma_\sigma$, there exists a unique affine function $f$ on $\Sigma_\sigma$ satisfying 
 \[f(z) = v(z)\]
 at each vertex $z\in\Sigma_\sigma$. We call $f$ the \textit{(affine) interpolation} of $v$ on $\Sigma_\sigma$. 

Next we introduce a decomposition of $\mathbb{T}^n$ into simplices at length scale $\varepsilon$: $\mathbb{T}^n = \bigcup Simp_\varepsilon$, where 
\[ Simp_\varepsilon = \{ z + \varepsilon \Sigma_\sigma \bigm| z \in V_\varepsilon, \ \sigma \in \operatorname{Sym}(n)\}.\]
The $n+1$ vertices of each simplex $\triangle \in Simp_\varepsilon$ belong to $V_\varepsilon$. Thus, given $w^\varepsilon \in X_\varepsilon$ and $\triangle \in Simp_\eps$, we can define $w_\triangle^\varepsilon$ to be the affine interpolation of $w^\varepsilon$ on $\triangle$. Then we define linear operators $L_\varepsilon \colon X_\varepsilon \to X_0$ by setting 
 \[L_\varepsilon w^\varepsilon(x) = w_\triangle^\varepsilon(x) \quad \text{for } x \in \triangle.\]
In order to prove Mosco-convergence of $X_\varepsilon$ to $X_0$ and $\mathcal{E}_\varepsilon$ to $\mathcal{E}_0$ along $L_\varepsilon$, we will need to compute the number of elements of the sets $\left\{ \triangle \in Simp_\varepsilon \bigm| z \in \triangle \right\}$ for a given $z \in V_\varepsilon$ and  $\left\{ \triangle \in Simp_\varepsilon \bigm| \{z, \overline{z}\} \subset \triangle \right\}$ for a given $\{z, \overline{z}\} \in E_\varepsilon$. We note that the number of simplices in the simplicial decomposition of a cube that contains a given vertex depends on the vertex. For example, in the case of the unit square, vertex $(0,0)$ is contained in two triangles, while vertex $(0,1)$ only in one. Nevertheless, translational symmetry of $G_\varepsilon$ allows us to obtain the value of $\# \left\{ \triangle \in Simp_\varepsilon \bigm| z \in \triangle \right\}$ without computing $\# \left\{ \Sigma_\sigma \bigm| y \in \Sigma_\sigma \right\}$ for each vertex $y$ of $[0,1]^n$.
 
\begin{lemma}\label{lem:counting} 
 For any node $z \in V_\varepsilon$,
 \[\# \left\{ \triangle \in Simp_\varepsilon \bigm| z \in \triangle \right\} = (n+1)!.\]
For any edge $\{z, \overline{z}\} \in E_\varepsilon$,
 \[\# \left\{ \triangle \in Simp_\varepsilon \bigm| \{z, \overline{z}\} \subset \triangle \right\} = n!.\]
\end{lemma}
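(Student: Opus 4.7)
My plan is to prove both identities by double-counting, after reducing to a canonical configuration via symmetry.

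First I would observe that the map $x \mapsto x + z'$ for any $z' \in V_\varepsilon$ preserves $V_\varepsilon$, $E_\varepsilon$, and $Simp_\varepsilon$ (it sends $z + \varepsilon \Sigma_\sigma$ to $(z+z') + \varepsilon \Sigma_\sigma$), so $\#\{\triangle : z \in \triangle\}$ is constant in $z \in V_\varepsilon$ and $\#\{\triangle : \{z, \overline z\} \subset \triangle\}$ depends only on the displacement $\overline z - z$. In addition, the coordinate permutation action $\tau \in \operatorname{Sym}(n)$ sends $\Sigma_\sigma$ to $\Sigma_{\tau\sigma}$ and preserves $V_\varepsilon$, hence preserves $Simp_\varepsilon$; this makes all edge directions $\pm\varepsilon e_i$ equivalent, so the edge count is independent of the particular edge as well. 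I would also record the bijective parametrization $(z',\sigma) \leftrightarrow z' + \varepsilon \Sigma_\sigma$, which yields $\#Simp_\varepsilon = n! \cdot \#V_\varepsilon$.

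For the node count, I would count pairs $(z, \triangle) \in V_\varepsilon \times Simp_\varepsilon$ with $z \in \triangle$. The symmetry above gives $\#V_\varepsilon \cdot K$ on one side (with $K$ the sought constant). The other side equals $(n+1) \cdot \#Simp_\varepsilon$, provided $V_\varepsilon \cap \triangle$ coincides with the vertex set of $\triangle$ for every simplex. This identification is the main (modest) subtlety of the proof; I would verify it by noting that, in a fundamental domain, any $y \in \Sigma_\sigma \cap \{0,1\}^n$ has the sequence $(y_{\sigma(k)})_{k=1}^n$ nondecreasing and $\{0,1\}$-valued, so it must take the form $0,\ldots,0,1,\ldots,1$, leaving exactly $n+1$ admissible $y$'s. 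Combining everything yields $K = (n+1)!$.

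For the edge count, I would count pairs $(e, \triangle) \in E_\varepsilon \times Simp_\varepsilon$ with $e \subset \triangle$ in a parallel way. The left side is $\#E_\varepsilon \cdot K'$. For the right side, I would use that any two vertices of $\varepsilon \Sigma_\sigma$ differ by $\varepsilon \sum_{\ell=j+1}^k e_{\sigma(\ell)}$ for some $0 \leq j < k \leq n$, a sum of $k-j$ distinct basis vectors (since $\sigma$ is a bijection), which is a single basis vector --- i.e., an element of $E_\varepsilon$ --- precisely when $k = j+1$. Hence each simplex contributes exactly $n$ edges to $E_\varepsilon$, giving $n \cdot \#Simp_\varepsilon$ on the right. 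With $\#E_\varepsilon = n \cdot \#V_\varepsilon$ (each node is incident to $2n$ edges, each counted twice), the conclusion $K' = n!$ follows.
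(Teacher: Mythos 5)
Your proof is correct and follows the same double-counting-plus-symmetry strategy as the paper's. You are in fact slightly more careful at one point: you explicitly verify that each simplex $\triangle\in Simp_\varepsilon$ contains \emph{exactly} $n+1$ nodes and \emph{exactly} $n$ edges of $G_\varepsilon$ (by identifying $\Sigma_\sigma\cap\{0,1\}^n$ as the nondecreasing $\{0,1\}$-sequences, and by showing that vertex differences which are single lattice steps correspond precisely to consecutive vertices $\alpha_{k-1}^\sigma,\alpha_k^\sigma$), whereas the paper asserts these incidence counts without spelling out why no additional nodes or edges of $G_\varepsilon$ can lie in $\triangle$.
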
 

\begin{proof} 
We observe that $\# V_\varepsilon = \varepsilon^{-n}$ and $\# Simp_\varepsilon = \varepsilon^{-n} \,n!$.  As we have already mentioned, each simplex in $Simp_\varepsilon$ has exactly $n+1$ vertices which all belong to the node set $V_\varepsilon$. By translational invariance of $G_\varepsilon$, the number of simplices that contain a given node does not depend on the node. Thus, by counting in two ways the number of elements of the set $\{ (z, \triangle)\in V_\varepsilon\times Simp_\varepsilon \,|\, z \in \triangle\}$, we get for any $z \in V_\varepsilon$,  
\[ \# \left\{ \triangle \in Simp_\varepsilon \bigm| z \in \triangle \right\} = \frac{(n+1) \, \# Simp_\varepsilon}{\# V_\varepsilon} = (n+1)!.\] 

Next, we have $\# E_\varepsilon = \varepsilon^{-n}\, n$. On the other hand, each simplex has $n$ edges that belong to $E_\varepsilon$. By invariance of $G_\varepsilon$ with respect to translations and relabeling axes, the number of simplices that contain a given edge does not depend on the edge. Thus, by counting in two ways the number of elements of the set $\{ (\{z,\overline{z}\}, \triangle)\in E_\varepsilon\times Simp_\varepsilon \,|\, \{z,\overline{z}\} \subset \triangle\}$, we get for any $\{z,\overline{z}\}\in E_\varepsilon$, 
\[ \# \left\{ \triangle \in Simp_\varepsilon \bigm| \{z,\overline{z}\} \subset \triangle \right\} = \frac{n \, \# Simp_\varepsilon}{\# E_\varepsilon} = n!.\qedhere\] 
\end{proof} 

\begin{remark} \label{Rnur}
The proof of Lemma \ref{lem:counting} given above avoids counting the number $\#\{\Sigma_\sigma\mid y\in\Sigma_\sigma\}$ for each vertex $y$ of $[0,1]^n$ and $\#\{\Sigma_\sigma\bigm| \{y,\bar{y}\}\in\Sigma_\sigma\}$ for each edge $\{y,\bar{y}\}$ of $[0,1]^n$. Here we provide an alternative proof, where we explictly compute those numbers, which depend on the vertex $y$ or edge $\{y,\bar{y}\}$ (compare Figure \ref{F2d}, \ref{F3d}). 
\begin{figure}[h]
  \begin{minipage}[b]{0.5\linewidth}
\centering
\includegraphics[keepaspectratio, scale=0.2]{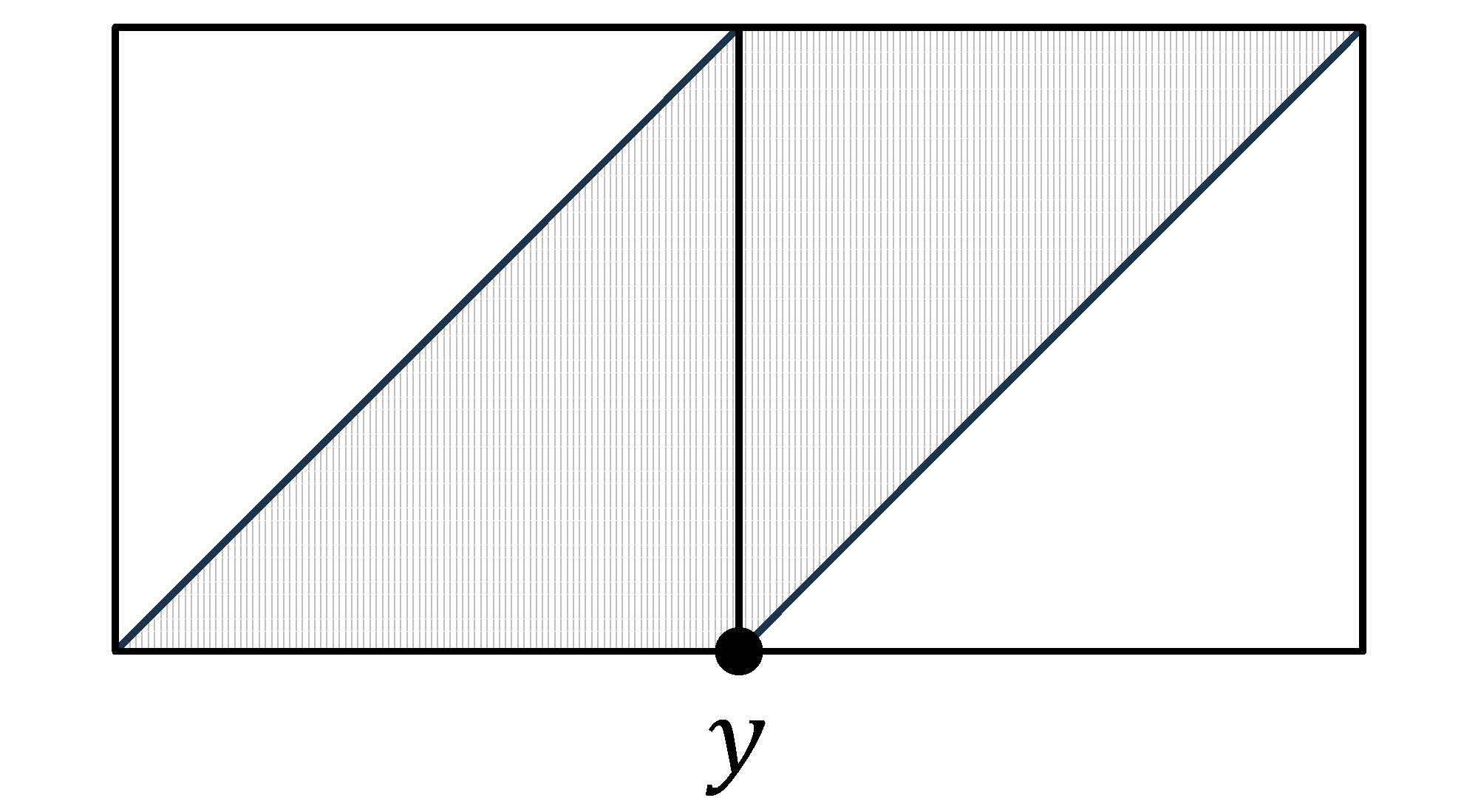} 
\caption{$2$-dimensional case\label{F2d}}
	\end{minipage}
  \begin{minipage}[b]{0.5\linewidth}
\centering 
\includegraphics[keepaspectratio, scale=0.27]{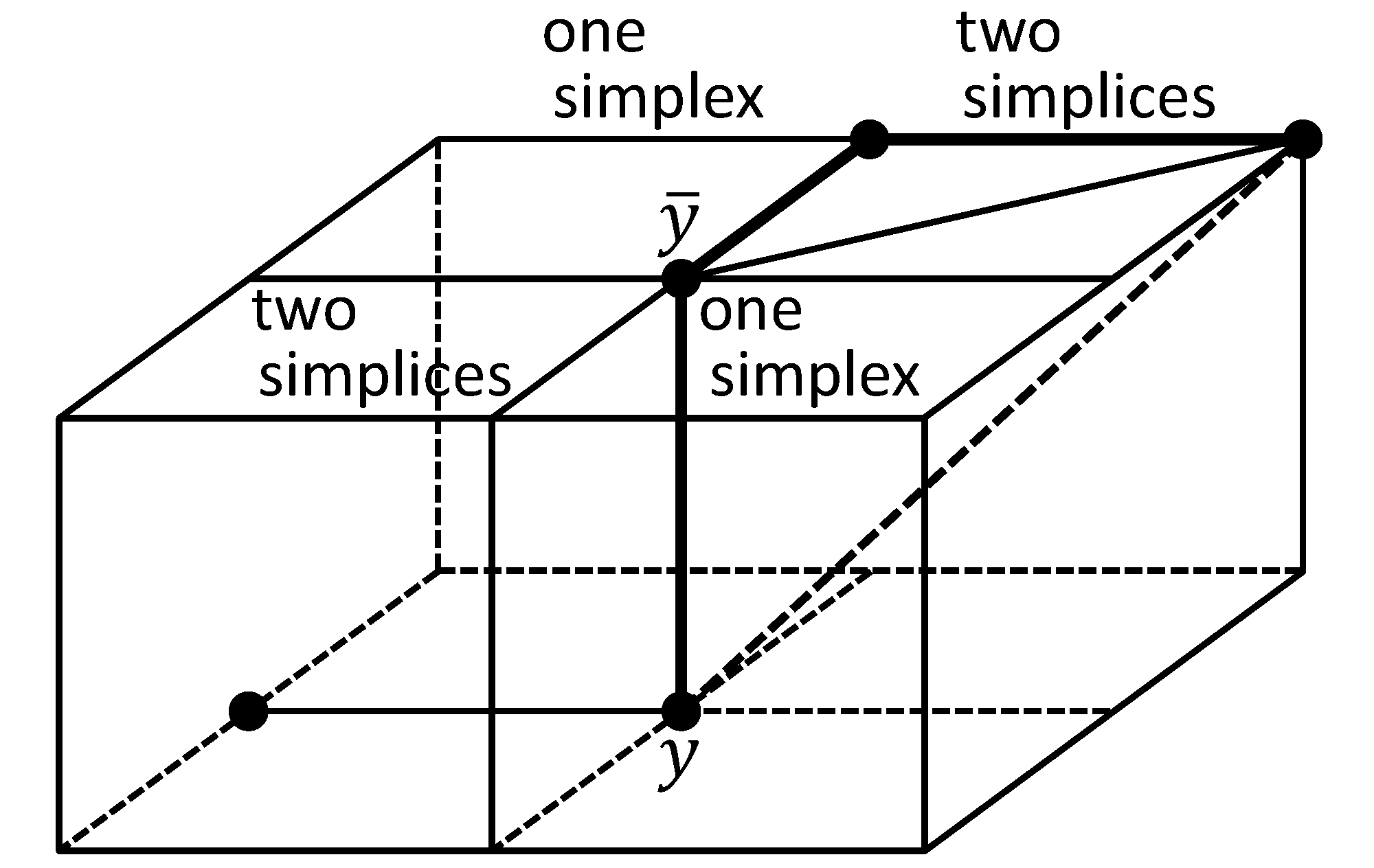}
\caption{$3$-dimensional case\label{F3d}}
  \end{minipage}
\end{figure}. 

The set of vertices of $[0,1]^n$ coincides with $\{0,1\}^n$. Let $k$ be the number of $1$'s among the coordinates of $y \in \{0,1\}^n$, i.e., $k = \sum y_j$. Then
\begin{equation} \label{EVern}
    \#\{\Sigma_\sigma\mid y\in\Sigma_\sigma\}
    = (n-k)! k!.
\end{equation}
Indeed, we may assume that $y=\alpha_k$.
A simplex $\Sigma_\sigma$ containing $\alpha_k$ is of the form
\[
    \Sigma_{\sigma_1,\sigma_2}
    = \left\{ 0 \leq x_{\sigma_1(1)}
    \leq \cdots \leq x_{\sigma_1(n-k)}
    \leq x_{\sigma_2(n-k+1)} \leq \cdots
    \leq x_{\sigma_2(n)} \leq 1 \right\},
\]
where $\sigma_1\in\operatorname{Sym}(n-k)$ and $\sigma_2\in\operatorname{Sym}(k)$.
 The total number of such simplices equals the order of $\operatorname{Sym}(n-k)$ times that of $\operatorname{Sym}(k)$, i.e., $(n-k)!k!$.
 %This yields \eqref{EVern}. If one considers vertices of $[0,1]^n$, there are $_n C_k$ vertices $y$ having exactly $n-k$ coordinates equal to zero.
Now, there are exactly $_n C_k$ vertices $y \in \{0,1\}^n$ such that $\sum y_j =k$. Thus, by \eqref{EVern}, %\textcolor{red}{, for any fixed node $z\in V_\varepsilon$,} 

\begin{multline*}
   \sum_{y \in \{0,1\}^n} \#\{\Sigma_\sigma
    \mid y \in \Sigma_\sigma\}  
    = \sum_{y \in \{0,1\}^n} \sum_{k=0}^n \#\left\{\Sigma_\sigma \ \bigg| \ \sum y_j = k \text{ and } y \in \Sigma_\sigma \right\}\\
    = \sum_{k=0}^n {}_n C_k (n-k)!k!
    = (n+1)n! = (n+1)!.
\end{multline*}
Finally, we observe that every node $z \in V_{\varepsilon}$ is a vertex of exactly $2^n$ elementary lattice hypercubes: for each $y \in \{0,1\}^n$, the node $z$ is in position $y$ with respect to exactly one of them. Thus, we have 
\[ \#\{\triangle \in Simp_\varepsilon 
    \mid z\in\triangle \} = \sum_{y \in \{0,1\}^n} \#\{\Sigma_\sigma
    \mid y \in \Sigma_\sigma\} = (n+1)!,\]
retrieving the first part of Lemma \ref{lem:counting}. 

The case of edges is a bit more involved.
 Let $\{y,\bar{y}\}$ be an edge of $[0,1]^n$.
 We can assume that $\{y,\bar{y}\}$ is \emph{vertical}, i.e., the last component of $y$ equals $0$ while the last component of $\bar{y}$ equals $1$, i.e., $y=(y',0)$, $\bar{y}=(y',1)$.
 Let $k$ be the number of $1$ among the coordinates of $y'$, i.e., $k = \sum y'_j$.
 Then
\begin{equation} \label{EEdge}
    \#\left\{\Sigma_\sigma \bigm|
    \{y,\bar{y}\} \in \Sigma_\sigma \right\}
    = (n-k-1)! k!.
\end{equation}
Indeed, owing to symmetry with respect to relabeling the axes, we may assume that $y'=\alpha'_k$, where $\alpha'_k=(\overbrace{0,\ldots,0}^{n-1-k},\overbrace{1,\ldots,1}^{k})$.

Simplex $\Sigma_\sigma$ contains the edge $\left\{(\alpha'_k,0),(\alpha'_k,1)\right\}$ if and only if it is of the form
\[
    \Sigma_{\sigma_1, \sigma_2} = \{ 0 \leq x_{\sigma_1(1)} \leq \cdots \leq x_{\sigma_1(n-k-1)}
    \leq x_n \leq x_{\sigma_2(n-k)} \leq
    \cdots \leq x_{\sigma_2(n-1)} \leq 1 \},
\]
where $\sigma_1\in\operatorname{Sym}(n-k-1)$ and $\sigma_2\in\operatorname{Sym}(k)$.
 The total number of such simplices equals the order of $\operatorname{Sym}(n-k-1)$ times that of $\operatorname{Sym}(k)$, i.e., $(n-k-1)!k!$.
 This yields \eqref{EEdge}.
Now, there are $_{n-1}C_k$ ways to choose which $k$ coordinates of $y'$ are equal to $1$. Thus, by \eqref{EEdge}, %\textcolor{red}{for any fixed edge $\{z,\bar{z}\}\in E_\varepsilon$,}
\begin{multline*}
    \sum_{y' \in \{0,1\}^{n-1}} \hspace{-10pt} \# \left\{ \Sigma_\sigma \bigm|
    \{(y',0), (y',1)\} \in \Sigma_\sigma \right\} 
    \\ = \sum_{y' \in \{0,1\}^{n-1}} \sum_{k=0}^{n-1} \# \left\{ \Sigma_\sigma \bigm| \{(y',0), (y',1)\} \in \Sigma_\sigma ,\  \sum y'\ =k \right\} \\    
    = \sum_{k=0}^{n-1} {}_{n-1}C_k (n-k-1)!k!
    = \sum_{k=0}^{n-1} (n-1)! = n!.
\end{multline*}
%\textcolor{red}{where $y'$ is a vertex of $[0,1]^{n-1}$ so that the summation in $y'$ is taken over all vertices of $[0,1]^{n-1}$.} 
Any edge $\{z, \bar{z}\} \in E_\varepsilon$ belongs to exactly $2^{n-1}$ elementary lattice hypercubes. Assuming that $z_n \neq \bar{z}_n$, for each vertical edge $\{y, \bar{y}\}$ of $[0,1]^n$, the edge $\{z, \bar{z}\}$ is in position $\{y, \bar{y}\}$ with respect to exactly one of them. Thus, we obtain  
\[\# \left\{ \triangle \in Simp_\varepsilon \bigm|
    \{ z,\bar{z} \} \in \triangle \right\} = \sum_{y' \in \{0,1\}^{n-1}} \hspace{-10pt} \# \left\{ \Sigma_\sigma \bigm|
    \{(y',0), (y',1)\} \in \Sigma_\sigma \right\} = n!\] 
 which concludes the alternative proof of Lemma \ref{lem:counting}. 
\end{remark}
%

%\ml{Even though asymptotic contractivity is enough to apply Theorem \ref{thm:Mosco}, we show here a stronger Mosco-convergence result.}

\begin{lemma}\label{lem:graph_X_conv}
    $X_\varepsilon$ Mosco-converge to $X_0$ along $L_\varepsilon$. 
\end{lemma}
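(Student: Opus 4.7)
The plan is to verify conditions \ref{H1} and \ref{H2} of Definition \ref{def:connect}. The main tool for \ref{H1} is the first counting identity of Lemma \ref{lem:counting}, which combines with Jensen's inequality on each simplex to give the required bound with constant~$1$; for \ref{H2} the approach is to sample smooth test functions and then extend to general $L^2$ data by a density/diagonal argument.

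For \ref{H1}, fix $w^\eps \in X_\eps$ and a simplex $\triangle \in Simp_\eps$ with vertices $z_0,\ldots,z_n \in V_\eps$. The restriction of $L_\eps w^\eps$ to $\triangle$ is given by $L_\eps w^\eps(x) = \sum_{i=0}^n \lambda_i(x)\, w^\eps(z_i)$, where $\lambda_0,\ldots,\lambda_n$ are the barycentric coordinates on $\triangle$, so that $0\le \lambda_i \le 1$ and $\sum_i \lambda_i \equiv 1$. By convexity of $t \mapsto t^2$,
\[
|L_\eps w^\eps(x)|^2 \leq \sum_{i=0}^n \lambda_i(x)\, |w^\eps(z_i)|^2 \quad \text{for } x \in \triangle.
\]
Integrating over $\triangle$ and using the symmetry identity $\int_\triangle \lambda_i\,\dd x = |\triangle|/(n+1)$ together with $|\triangle| = \eps^n/n!$, then summing over all $\triangle \in Simp_\eps$, we obtain
\[
\|L_\eps w^\eps\|_{X_0}^2 \leq \frac{\eps^n}{(n+1)!} \sum_{\triangle \in Simp_\eps}\ \sum_{z \in V_\eps,\, z \in \triangle} |w^\eps(z)|^2.
\]
Exchanging the order of summation and invoking the first part of Lemma \ref{lem:counting}, each $z \in V_\eps$ is counted exactly $(n+1)!$ times, so the right-hand side collapses to $\eps^n \sum_{z \in V_\eps} |w^\eps(z)|^2 = \|w^\eps\|_{X_\eps}^2$. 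This yields $\|L_\eps\|_{B(X_\eps,X_0)} \leq 1$, which is \ref{H1}.

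For \ref{H2}, I first treat continuous $w$. Given $w \in C(\bT^n)$, define $w^\eps \in X_\eps$ by sampling, $w^\eps(z) := w(z)$. Then $L_\eps w^\eps$ is the continuous piecewise affine interpolation of $w$ on $Simp_\eps$, and since $w$ is uniformly continuous on the compact torus while each simplex has diameter at most $\eps\sqrt{n}$, $L_\eps w^\eps \to w$ uniformly, hence in $X_0$. For a general $w \in L^2(\bT^n)$, pick a sequence $w_k \in C(\bT^n)$ with $w_k \to w$ in $L^2$. Let $w_k^\eps \in X_\eps$ be the sampling of $w_k$; by what we just proved, one can choose $\eps_k \searrow 0$ so that $\|L_\eps w_k^\eps - w_k\|_{X_0} < 1/k$ whenever $0 < \eps < \eps_k$. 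Setting $k(\eps) := \max\{k : \eps_k > \eps\}$ and $w^\eps := w_{k(\eps)}^\eps$, a standard diagonal argument gives $L_\eps w^\eps \to w$ in $X_0$, which in particular implies the weak convergence demanded by \ref{H2}.

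No real obstacle is anticipated. The only delicate point is the bookkeeping in \ref{H1}: the factor $(n+1)!$ supplied by the counting lemma exactly cancels the $1/(n+1)!$ produced by the mean value of a barycentric coordinate times the volume of a simplex, reflecting the fact that \ref{H1} must hold with constant $1$ rather than merely with some $\eps$-independent constant.
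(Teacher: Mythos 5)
Your proposal is correct and follows essentially the same route as the paper: for \ref{H1} you apply Jensen's inequality on each simplex, use $\int_\triangle \lambda_j = |\triangle|/(n+1) = \eps^n/(n+1)!$, and invoke the first counting identity of Lemma~\ref{lem:counting}; for \ref{H2} you sample on $V_\eps$ and close by a density/diagonal argument. The only cosmetic difference is that the paper reduces to smooth functions via mollification $w^\delta = \varrho_\delta * w$ (and also tracks $\|w^\eps\|_{X_\eps}$) while you reduce directly to continuous functions, which suffices since \ref{H2} only asks for weak convergence of $L_\eps w^\eps$.
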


\begin{proof} 
Let $w^\eps \in X_\varepsilon$. Observe that the vertices of any $\triangle \in Simp_\varepsilon$ are its extreme points. In particular, $\triangle$ is the convex hull of its vertices, i.e. 
\[\triangle = \left\{\sum_{j=0}^n \lambda_j z_j \biggm| 0 \leq \lambda_j \leq 1, \  \sum_{j=0}^n \lambda_j = 1\right\}.\]
Moreover, since  the vertices form an affine basis, each point $x \in \triangle$ has a unique expression in barycentric coordinates $\lambda_0, \ldots, \lambda_n$. Thus, the affine interpolation $w_\triangle^\varepsilon$ can be explicitly written as 
\[w_\triangle^\varepsilon(x) = \sum_{j=0}^n \lambda_j w^\varepsilon(z_j) \quad \text{for } x = \sum_{j=0}^n \lambda_j z_j \in \triangle.\] 
We have 
\[ \|L_\varepsilon w^\varepsilon \|_{X_0}^2 = \sum_{\triangle \in Simp_\varepsilon} \int_\triangle |w_\triangle^\varepsilon|^2. \]
For a given $\triangle$, by Jensen's inequality
\[ \int_\triangle |w_\triangle^\varepsilon|^2 = \int_\triangle \left|\sum_{j=0}^n \lambda_j(x) w^\varepsilon(z_j)\right|^2 dx \leq \int_\triangle \sum_{j=0}^n \lambda_j(x) \left|w^\varepsilon(z_j)\right|^2 dx = \sum_{j=0}^n \left|w^\varepsilon(z_j)\right|^2 \int_\triangle \lambda_j(x)\,dx.\]
It is not difficult to show, by induction over the space dimension $n$, that
\[\int_\triangle \lambda_j(x)\,dx = \frac{1}{\varepsilon^n \, (n+1)!}.\]
independently of $j$. This is a special case of a general formula for the integral of a monomial in barycentric coordinates over a simplex given in \cite[Section 2]{Chen2008}. (An explicitly written out proof can be found in \cite{VermolenSegal}.) Thus, by Lemma \ref{lem:counting},
\[\|L_\varepsilon w^\varepsilon \|_{X_0}^2 \leq \frac{1}{\varepsilon^n \, (n+1)!}\sum_{\triangle \in Simp_\varepsilon} \sum_{z \in \triangle \cap V_\varepsilon} \left|w^\varepsilon(z)\right|^2 = \varepsilon^{-n} \sum_{z \in V_\varepsilon} \left|w^\varepsilon(z)\right|^2 = \|w^\varepsilon\|_{X_\varepsilon}^2, \]
whence \ref{H1} holds. 

For a given $w \in X_0$, let $w^\delta := \varrho_\delta * w$, where $\varrho_\delta$ is a standard mollifying kernel. We have $w^\delta \to w$ in $X_0$ as $\delta \to 0^+$. For a fixed $\delta>0$ and any $\varepsilon \in 1/\mathbb{N}$, let $\widetilde{w}^{\delta, \varepsilon} \in X_\varepsilon$ be given by 
\[ \widetilde{w}^{\delta, \varepsilon}(z) = w^\delta(z) \quad \text{for } z \in V_\varepsilon. \]
Since $w^\delta$ are continuous (and therefore Riemann integrable), we see that 
\[ \|\widetilde{w}^{\delta, \varepsilon}\|_{X_\varepsilon}^2 = \sum_{z \in V_\varepsilon} \varepsilon^{-n} |\widetilde{w}^{\delta, \varepsilon}(z)|^2 = \sum_{z \in V_\varepsilon} \varepsilon^{-n} |w^\delta(z)|^2 \to \int_{\mathbb{T}^n} |w^\delta|^2 = \|w^{\delta}\|_{X_0}^2\]
and 
\[L_\varepsilon \widetilde{w}^{\delta, \varepsilon} \to w^\delta \quad \text{in } X_0\]
as $\varepsilon \to 0^+$. By the usual diagonal procedure, choosing a sequence $\delta(\varepsilon)$ that converges to $0$ sufficiently slowly as $\varepsilon \to 0^+$, we obtain a sequence $w^\varepsilon := \widetilde{w}^{\delta(\varepsilon), \varepsilon}$ that satisfies \ref{H2}. 
\end{proof} 

Now it remains to show that $\mathcal{E}_\varepsilon$ Mosco-converge to $\mathcal{E}_0$ along $L_\varepsilon$. The crucial ingredient is the following observation. 

\begin{lemma}\label{lem:graph_E_eq}
    For any $\varepsilon \in 1/\mathbb{N}$ and $w^\varepsilon \in X_\varepsilon$,
    \[ \mathcal{E}_\varepsilon(w^\varepsilon) = \mathcal{E}_0(L_\varepsilon w^\varepsilon).\] 
\end{lemma}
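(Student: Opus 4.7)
The plan is to exploit the fact that $L_\varepsilon w^\varepsilon$ is piecewise affine on the simplicial decomposition $Simp_\varepsilon$, and then count edges. Since the gradient of an affine function is constant, the orthotropic $p$-energy of $L_\varepsilon w^\varepsilon$ splits as a sum over simplices of $|\nabla L_\varepsilon w^\varepsilon|_{\ell^p}^p$ times the simplex volume. The structure of each $\triangle \in Simp_\varepsilon$ (a translate of $\varepsilon\Sigma_\sigma$) is such that consecutive vertices $z_{k-1}^\triangle, z_k^\triangle$ differ by exactly $\varepsilon e_{\sigma(k)}$; no other pair of vertices of $\triangle$ is joined by an edge of $E_\varepsilon$.

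First, I would write down the gradient of the affine interpolation explicitly. For $\triangle$ with ordered vertices $z_0^\triangle, \ldots, z_n^\triangle$ satisfying $z_k^\triangle - z_{k-1}^\triangle = \varepsilon e_{\sigma(k)}$, the affine function $w_\triangle^\varepsilon$ satisfies
\[
\partial_{\sigma(k)} w_\triangle^\varepsilon = \frac{w^\varepsilon(z_k^\triangle) - w^\varepsilon(z_{k-1}^\triangle)}{\varepsilon},\qquad k=1,\ldots,n,
\]
so on $\triangle$ we have $|\nabla L_\varepsilon w^\varepsilon|_{\ell^p}^p = \sum_{k=1}^n \varepsilon^{-p}|w^\varepsilon(z_k^\triangle) - w^\varepsilon(z_{k-1}^\triangle)|^p$. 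Since $|\triangle| = \varepsilon^n/n!$, integration yields
\[
\int_\triangle |\nabla L_\varepsilon w^\varepsilon|_{\ell^p}^p = \frac{\varepsilon^n}{n!} \sum_{k=1}^n \left(\frac{|w^\varepsilon(z_k^\triangle) - w^\varepsilon(z_{k-1}^\triangle)|}{\varepsilon}\right)^p.
\]

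Next, I would sum over all $\triangle \in Simp_\varepsilon$ and swap the order of summation, grouping by edges. Each ordered pair $(z_{k-1}^\triangle, z_k^\triangle)$ is an edge of $E_\varepsilon$, and by Lemma~\ref{lem:counting} each edge $\{z, \overline{z}\} \in E_\varepsilon$ appears as such a pair in exactly $n!$ simplices. This gives
\[
\mathcal{E}_0(L_\varepsilon w^\varepsilon) = \frac{1}{p}\cdot \frac{\varepsilon^n}{n!}\cdot n! \sum_{\{z,\overline{z}\}\in E_\varepsilon} \left(\frac{|w^\varepsilon(z)-w^\varepsilon(\overline{z})|}{\varepsilon}\right)^p = \mathcal{E}_\varepsilon(w^\varepsilon).
\]
For $p=1$, note that $L_\varepsilon w^\varepsilon$ is continuous and piecewise affine, hence lies in $W^{1,1}(\mathbb{T}^n) \subset BV(\mathbb{T}^n)$, so $|DL_\varepsilon w^\varepsilon|_{\ell^1}(\mathbb{T}^n) = \int_{\mathbb{T}^n}|\nabla L_\varepsilon w^\varepsilon|_{\ell^1}$ and the identical computation applies.

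The only nontrivial point is the combinatorial bookkeeping of which edges of each simplex belong to $E_\varepsilon$ (only the $n$ ``consecutive'' edges along the staircase, not all $\binom{n+1}{2}$ pairs of vertices) and the matching count from Lemma~\ref{lem:counting}; once these are in place the identity is a direct calculation rather than a limiting argument, which is exactly why the abstract theorem will apply so cleanly in the next step.
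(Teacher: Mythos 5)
Your proof is correct and matches the paper's argument almost exactly: compute the gradient of the affine interpolation on each simplex $\triangle = z+\varepsilon\Sigma_\sigma$, note the volume $|\triangle|=\varepsilon^n/n!$, and then regroup the sum over simplices into a sum over edges using the count of $n!$ simplices per edge from Lemma~\ref{lem:counting}. Your explicit remark that $L_\varepsilon w^\varepsilon\in W^{1,1}\subset BV$ handles the $p=1$ case a bit more carefully than the paper does, but the substance of the two proofs is the same.
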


\begin{proof} 
We recall that the vertices of the simplex $\triangle = z + \varepsilon \Sigma_\sigma \in Simp_\varepsilon$ can be written as $z + \varepsilon \alpha_k^\sigma$, $k=0, \ldots, n$. Moreover, 
\[z + \varepsilon \alpha_k^\sigma - (z + \varepsilon \alpha_{k-1}^\sigma) = \varepsilon e_{\sigma(k)}\]
for $k=1, \ldots, n$. Thus, given $w^\varepsilon \in X_\varepsilon$, the affine interpolation $w^\varepsilon_\triangle$ can be expressed in the form
\[ w^\varepsilon_\triangle((x_1, \ldots, x_n)) = w^\varepsilon(z) + \sum_{k=1}^n \frac{x_{\sigma(k)}- z_{\sigma(k)}}{\varepsilon} \left( w^\varepsilon(z + \varepsilon \alpha_k^\sigma) - w^\varepsilon(z + \varepsilon \alpha_{k-1}^\sigma)\right)\]
and we have 
\[ \frac{\partial w^\varepsilon_\triangle}{\partial x_{\sigma(k)}} = \frac{w^\varepsilon(z + \varepsilon \alpha_k^\sigma) - w^\varepsilon(z + \varepsilon \alpha_{k-1}^\sigma)}{\varepsilon} .\]
Therefore, we can write 
\[ \int_\triangle \sum_{k=1}^n \left|\frac{\partial w^\varepsilon_\triangle}{\partial x_k}\right|^p =  \frac{\varepsilon^{-n}}{n!}\sum_{k=1}^n \left|\frac{\partial w^\varepsilon_\triangle}{\partial x_{\sigma(k)}}\right|^p = \frac{\varepsilon^{-n}}{n!} \sum_{\substack{\{z, \overline{z}\} \in E_\varepsilon\\ \{z, \overline{z}\} \subset \triangle}} \left| \frac{w^\varepsilon(\overline{z}) - w^\varepsilon(z)}{\varepsilon} \right|^p  \]
and 
\[ \mathcal{E}_0(L_\varepsilon w^\varepsilon) = \sum_{\triangle \in Simp_\varepsilon} \int_\triangle \sum_{k=1}^n \left|\frac{\partial w^\varepsilon_\triangle}{\partial x_k}\right|^p = \sum_{\triangle \in Simp_\varepsilon} \frac{\varepsilon^{-n}}{n!} \sum_{\substack{\{z, \overline{z}\} \in E_\varepsilon\\ \{z, \overline{z}\} \subset \triangle}} \left| \frac{w^\varepsilon(\overline{z}) - w^\varepsilon(z)}{\varepsilon}\right|^p.\]
Eventually, appealing to Lemma \ref{lem:counting}, we conclude that 
\[ \mathcal{E}_0(L_\varepsilon w^\varepsilon) =  \varepsilon^{-n} \sum_{\{z, \overline{z}\} \in E_\varepsilon} \left| \frac{w^\varepsilon(\overline{z}) - w^\varepsilon(z)}{\varepsilon}\right|^p = \mathcal{E}_\varepsilon(w^\varepsilon).\qedhere\]
\end{proof} 

\begin{lemma}  \label{lem:graph_E_conv}
    $\mathcal{E}_\varepsilon$ Mosco-converge to $\mathcal{E}_0$ along $L_\varepsilon$. 
\end{lemma}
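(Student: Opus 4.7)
The key ingredient is already provided by Lemma \ref{lem:graph_E_eq}, which identifies $\mathcal{E}_\varepsilon(w^\varepsilon)$ with $\mathcal{E}_0(L_\varepsilon w^\varepsilon)$. This reduces the Mosco-convergence of $\mathcal{E}_\varepsilon$ along $L_\varepsilon$ to the behavior of a single fixed functional $\mathcal{E}_0$ on $X_0$, and the verification of \ref{H3}--\ref{H4} becomes essentially a matter of continuity and approximation in $X_0$.

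For \ref{H3}, suppose $(w^\varepsilon)$ is asymptotically bounded in $X_\varepsilon$ and $L_\varepsilon w^\varepsilon \rightharpoonup w$ in $X_0$. By Lemma \ref{lem:graph_E_eq},
\[
\liminf_{\varepsilon \to 0^+} \mathcal{E}_\varepsilon(w^\varepsilon) = \liminf_{\varepsilon \to 0^+} \mathcal{E}_0(L_\varepsilon w^\varepsilon).
\]
The functional $\mathcal{E}_0$ is convex and strongly lower semicontinuous on $X_0$ (for $p=1$ this is standard for $BV$; for $p>1$ it is standard for $W^{1,p}$), hence it is weakly lower semicontinuous on $X_0$, which yields the desired bound $\liminf_{\varepsilon \to 0^+} \mathcal{E}_\varepsilon(w^\varepsilon) \geq \mathcal{E}_0(w)$.

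For \ref{H4}, given $w \in D(\mathcal{E}_0)$, I would construct a recovery sequence by first mollifying $w$ and then sampling on $V_\varepsilon$. Set $w^\delta := \varrho_\delta * w$, and let $\widetilde w^{\delta,\varepsilon}\in X_\varepsilon$ denote the restriction of $w^\delta$ to $V_\varepsilon$. From the proof of Lemma \ref{lem:graph_X_conv}, $L_\varepsilon\widetilde w^{\delta,\varepsilon}\to w^\delta$ strongly in $X_0$ and $\|\widetilde w^{\delta,\varepsilon}\|_{X_\varepsilon}\to\|w^\delta\|_{X_0}$ as $\varepsilon\to 0^+$. For a fixed smooth $w^\delta$, uniform continuity of $\nabla w^\delta$ implies that the piecewise affine interpolant $L_\varepsilon\widetilde w^{\delta,\varepsilon}$ converges to $w^\delta$ in $W^{1,p}(\mathbb{T}^n)$ if $p>1$, and uniformly with its derivatives if $p=1$; in either case, Lemma \ref{lem:graph_E_eq} gives
\[
\mathcal{E}_\varepsilon(\widetilde w^{\delta,\varepsilon}) = \mathcal{E}_0(L_\varepsilon\widetilde w^{\delta,\varepsilon}) \;\xrightarrow[\varepsilon\to 0^+]{}\; \mathcal{E}_0(w^\delta).
\]

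It remains to pass to the limit $\delta\to 0^+$. Here I rely on the standard fact that mollification converges in the orthotropic $p$-Dirichlet energy: for $p>1$, $w^\delta\to w$ in $W^{1,p}(\mathbb{T}^n)$; for $p=1$, $w^\delta\to w$ strictly in $BV$, so that by lower semicontinuity combined with the upper bound $|Dw^\delta|_{\ell^1}(\mathbb{T}^n)\le |Dw|_{\ell^1}(\mathbb{T}^n)$ (component-wise convolution inequality), one obtains $\mathcal{E}_0(w^\delta)\to\mathcal{E}_0(w)$. With this, a diagonal selection $\delta=\delta(\varepsilon)\to 0^+$ sufficiently slowly yields $w^\varepsilon:=\widetilde w^{\delta(\varepsilon),\varepsilon}$ satisfying $L_\varepsilon w^\varepsilon\to w$ in $X_0$, $\|w^\varepsilon\|_{X_\varepsilon}\to\|w\|_{X_0}$, and $\mathcal{E}_\varepsilon(w^\varepsilon)\to\mathcal{E}_0(w)$, completing \ref{H4}.

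The mildly delicate point is the case $p=1$: one needs the strict $BV$-convergence $\mathcal{E}_0(w^\delta)\to\mathcal{E}_0(w)$ for the orthotropic anisotropy, but this follows by applying the scalar $BV$-mollification argument coordinate by coordinate, so no new difficulty arises.
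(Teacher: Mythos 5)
Your proposal is correct and follows essentially the same route as the paper: reduce everything to $\mathcal{E}_0$ via Lemma \ref{lem:graph_E_eq}, deduce \ref{H3} from weak lower semicontinuity of the convex l.s.c.\ functional $\mathcal{E}_0$, and build the recovery sequence for \ref{H4} by mollifying, sampling on $V_\varepsilon$, and a diagonal argument in $\delta(\varepsilon)$. The only (minor) difference is in handling $p=1$: the paper invokes smooth strict $BV$-approximation via \cite[Remark 3.22]{AFP}, whereas you use mollification directly and justify $\mathcal{E}_0(w^\delta)\to\mathcal{E}_0(w)$ by combining the convolution inequality $\mathcal{E}_0(w^\delta)\le\mathcal{E}_0(w)$ with lower semicontinuity --- a slightly more explicit and self-contained argument that works cleanly on $\mathbb{T}^n$, but the substance is the same.
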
 

\begin{proof} 
 Suppose that $w \in X_0$, $w^\varepsilon \in X_\varepsilon$, $ 1/\varepsilon \in\mathbb{N}$ are such that $L_\varepsilon w^\varepsilon \rightharpoonup w$ in $X_0$. Then, by Lemma \ref{lem:graph_E_eq}, 
 \[ \liminf_{\eps \to 0^+} \mathcal{E}_\varepsilon(w^\varepsilon) = \liminf_{\eps \to 0^+} \mathcal{E}_0(L_\varepsilon w^\varepsilon) \geq \mathcal{E}_0(w), \]
because $\mathcal{E}_0$ is convex and lower semicontinuous. Thus, \ref{H3} holds. 

It remains to show \ref{H4}. As in the proof of Lemma \ref{lem:graph_X_conv}, we fix $\delta>0$, $1/\varepsilon \in \mathbb{N}$, $w \in X_0$. 
We may assume that $\mathcal{E}_0(w) < \infty$, whence $w \in W^{1,p}(\mathbb{T}^n)$ if $p>1$ and $w \in BV(\mathbb{T}^n)$ if $p=1$. For $p>1$ we let $w^\delta := \varrho_\delta * w$, where $\varrho_\delta$ is a standard mollifying kernel. For $p=1$ w take t $w^\delta=$ to be the sequence of smooth functions converging to $w$ in the strict sense, see \cite[Remark 3.22]{AFP}. 
In any case, we have $\mathcal{E}_0(w^\delta)\to \mathcal{E}_0(w)$ as $\delta \to 0^+$. We also take $\widetilde{w}^{\delta, \varepsilon} \in X_\varepsilon$ given by 
\[ \widetilde{w}^{\delta, \varepsilon}(z) = w^\delta(z) \quad \text{for } z \in V_\varepsilon. \]
We have already observed that
\[ \|\widetilde{w}^{\delta, \varepsilon}\|_{X_\varepsilon}^2  \to \|w^{\delta}\|_{X_0}^2 \quad \text{and} \quad  L_\varepsilon \widetilde{w}^{\delta, \varepsilon} \to w^\delta \quad \text{in } X_0\]
as $\varepsilon \to 0^+$. Since $w^\delta$ are continuously differentiable, we also have 
\[L_\varepsilon \widetilde{w}^{\delta, \varepsilon} \to w^\delta \quad \text{in } W^{1,p}(\mathbb{T}^n)\]
whence, by Lemma \ref{lem:graph_E_eq},
\[ \mathcal{E}_\varepsilon (\widetilde{w}^{\delta, \varepsilon}) = \mathcal{E}_0 (L_\varepsilon \widetilde{w}^{\delta, \varepsilon}) \to \mathcal{E}_0 (w^\delta) \quad \text{as } \varepsilon \to 0^+.\]
Again, choosing a sequence $\delta(\varepsilon)$ that converges to $0$ sufficiently slowly as $\varepsilon \to 0^+$, we obtain a sequence $w^\varepsilon := \widetilde{w}^{\delta(\varepsilon), \varepsilon}$ that satisfies \eqref{H2_conv} and $\mathcal{E}_\varepsilon (w^\varepsilon)  \to \mathcal{E}_0 (w)$ as $\varepsilon \to 0^+$. 
\end{proof} 

From Lemmata \ref{lem:graph_X_conv} and \ref{lem:graph_E_conv}, using Theorem \ref{thm:Mosco}, we immediately deduce:
\begin{theorem}
Gradient flows $S_\varepsilon$ uniformly converge to $S_0$ along $L_\varepsilon$. \qed
\end{theorem}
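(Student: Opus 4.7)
The plan is to simply invoke the abstract convergence result, Theorem \ref{thm:Mosco}, whose hypotheses have all been verified in the two preceding lemmas. Concretely, Lemma \ref{lem:graph_X_conv} establishes that the Hilbert spaces $X_\varepsilon$ Mosco-converge to $X_0$ along the affine-interpolation operators $L_\varepsilon$, i.e., that these operators are asymptotically contractive (property \ref{H1}) and asymptotically surjective (property \ref{H2}). Lemma \ref{lem:graph_E_conv} then shows that the discrete $p$-Dirichlet energies $\mathcal{E}_\varepsilon$ Mosco-converge to the orthotropic continuum energy $\mathcal{E}_0$ along $L_\varepsilon$ in the sense of Definition \ref{mos-con} (conditions \ref{H3} and \ref{H4}). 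Since both the spaces and the functionals Mosco-converge, Theorem \ref{thm:Mosco} delivers the uniform convergence of gradient flows $S_\varepsilon \to S_0$ along $L_\varepsilon$.

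The structural reason this wrap-up works without any extra argument is the exact identity $\mathcal{E}_\varepsilon(w^\varepsilon) = \mathcal{E}_0(L_\varepsilon w^\varepsilon)$ proved in Lemma \ref{lem:graph_E_eq}. Because the affine interpolation on each simplex $\triangle = z + \varepsilon \Sigma_\sigma$ has constant partial derivatives equal to the finite differences along the edges of $\triangle$, and because the combinatorial count in Lemma \ref{lem:counting} matches the number of simplices sharing each edge of $E_\varepsilon$, the continuum energy of $L_\varepsilon w^\varepsilon$ is literally equal to the discrete energy of $w^\varepsilon$. This collapses the $\liminf$ inequality \ref{H3} to the lower semicontinuity of $\mathcal{E}_0$ on $X_0$ and reduces \ref{H4} to a mollification-plus-diagonal recovery argument.

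The only caveat — not really an obstacle — is the restriction to the subsequence $1/\varepsilon \in \mathbb{N}$, needed so that $V_\varepsilon$ fits on $\mathbb{T}^n$ as a group; as noted at the opening of Section \ref{SgraphPDE}, this is harmless because all statements in the abstract framework remain valid on such a subsequence. Thus nothing more than a one-line deduction from Theorem \ref{thm:Mosco}, Lemma \ref{lem:graph_X_conv}, and Lemma \ref{lem:graph_E_conv} is required, which is exactly why the author marks the theorem with \verb|\qed| at the end of its statement.
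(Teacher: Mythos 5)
Your proposal is correct and coincides with the paper's own proof: the theorem is obtained as an immediate consequence of Theorem \ref{thm:Mosco} once Lemma \ref{lem:graph_X_conv} (Mosco convergence of the spaces) and Lemma \ref{lem:graph_E_conv} (Mosco convergence of the energies) are in hand. Your additional commentary on the exact identity from Lemma \ref{lem:graph_E_eq} and the $1/\varepsilon\in\mathbb{N}$ caveat is accurate but not needed beyond the one-line deduction.
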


\section*{Acknowledgments}
The work of the first author was partly supported by the Japan Society for the Promotion of Science (JSPS) through grants KAKENHI Grant Numbers 19H00639, 20K20342, 24K00531 and 24H00183 and by Arithmer Inc., Daikin Industries, Ltd.\ and Ebara Corporation through collaborative grants.

The work of the second author was partly supported by the grant of the National Science Centre (NCN), Poland no.\ 2024/55/D/ST1/03055. Part of this work was created during the second author’s JSPS Postdoctoral Fellowship at the University
of Tokyo.

The third author enjoyed support of the IDUB program of the University of Warsaw, which enabled his visits to collaborators.

\end{document}